\documentclass[11pt]{amsart}
\pdfoutput=1

\usepackage{latexsym}
\usepackage{graphicx} 
\usepackage{amsmath} 
\usepackage{amsfonts}
\usepackage{amssymb} 
\usepackage{mathtools} 
\usepackage{bbm}
\usepackage{mathrsfs}
\usepackage{extarrows}
\usepackage{color}

\usepackage{multirow}
\usepackage{diagbox}
\usepackage{threeparttable}
\usepackage{booktabs} 
\usepackage{tabularx}
\usepackage{array}
\usepackage{ragged2e} 
\usepackage{siunitx} 

\usepackage{enumitem}
\usepackage[percent]{overpic}
\usepackage{algorithm}
\usepackage{algorithmicx}
\usepackage{algpseudocode}
\usepackage{caption} 
\usepackage{subcaption}

\usepackage[left=1.25in,right=1.25in,top=1in]{geometry}
\usepackage{amsthm}
\usepackage{tikz}

\usepackage{hyperref}

\floatname{algorithm}{Algorithm}

\newcolumntype{L}{>{$}l<{$}} 
\newcolumntype{Y}{>{\RaggedRight\arraybackslash}X} 
\newcolumntype{M}{>{$}c<{$}} 

\DeclareCaptionLabelFormat{subfiglabel}{#1~(#2)} 


\usepackage{soul}

\newcounter{Rownumber} 

\newtheorem{theorem}{Theorem}[section]

\newtheorem{remark}{Remark}
\newtheorem{Proposition}{Proposition}[section]

\makeatletter

\newtheoremstyle{condstyle}%
  {\abovedisplayskip}{\belowdisplayskip}
  {}
  {}
  {\bfseries}
  {}
  {0.5em}
  {\thmname{#1}\ \thmnumber{#2.}\thmnote{\ \textnormal{(#3)}}}

\theoremstyle{condstyle}
\newtheorem{condition}{Condition}

\theoremstyle{plain} 
\title{Inverse acoustic scattering for random obstacles with multi-frequency data }

 \author{zhiqi sun}
 \thanks{School of Mathematical Sciences, Shanghai Jiao Tong University, Shanghai 200240, People’s Republic of China. Email: sunzhq1016@sjtu.edu.cn}

 \author{xiang xu}
\thanks{School of Mathematical Sciences, Zhejiang University, Hangzhou 310027,  People’s Republic of China. Email: xxu@zju.edu.cn}

 \author{yiwen lin}
 \thanks{School of Mathematical Sciences, Shanghai Jiao Tong University, Shanghai 200240,   People’s Republic of China. Email: linyiwen@sjtu.edu.cn}

\begin{document}
\graphicspath{figures/}
\setstcolor{red}  
\maketitle

\begin{abstract}

We study an inverse random obstacle scattering problems in $\mathbb{R}^2$ where  the scatterer is formulated by a Gaussian process defined on the angular parameter domain. Equipped with a modified covariance function which is mathematically well-defined and physically consistent, the Gaussian process admits a   parameterization via Karhunen--Lo\`eve (KL) expansion. Based on observed multi-frequency  data, we  develop a two-stage inversion method: the first stage reconstructs the baseline shape of the random scatterer and the second stage estimates  the statistical characteristics of  the boundary fluctuations, including KL eigenvalues and covariance hyperparameters. We further provide theoretical  justifications for the modeling and  inversion pipeline, covering well-definedness of the Gaussian-process model, convergence for the two-stage procedure and  a brief discussion on uniqueness. Numerical experiments demonstrate stable recovery of both geometric and statistical information for obstacles with simple and more complex shapes.

\end{abstract}


\pagestyle{myheadings}
\thispagestyle{plain}
\markboth{Z. Sun, X. Xu, Y. Lin}
{RANDOM INVERSE OBSTACLE PROBLEM}

\section{Introduction}

Direct and inverse scattering problems are of great importance in mathematical physics, particularly the \emph{inverse scattering problem} (ISP) has attracted growing attention in recent years due to its wide range of important applications across numerous fields such as non-destructive testing \cite{bao2001mathematical}, medical imaging \cite{kuchment2013radon} and beyond. A classic and significant subclass within ISP are the \emph{inverse obstacle scattering problems} (IOSP). In this paper we focus on the IOSP in $\mathbb{R}^2$ where the goal is to recover the boundary of an unknown scatterer from noisy far-field measurements. 

The inverse obstacle scattering problem  is an essential topic in inverse scattering research, with extensive studies devoted to its modeling, analysis and numerical reconstruction in both two and three dimensions. Depending on the underlying physical setting, the scattering measurements may correspond to different wave models including acoustic waves \cite{colton1986novel,zhang2022direct}, elastic waves \cite{yue2019numerical,dong2019inverse} and electromagnetic waves \cite{cakoni2004electromagnetic,li2019inverse}. From an algorithmic standpoint, existing approaches can be broadly divided into classical computational methods and deep learning–based methods: classical methods are often grouped into iterative and direct methods, representative iterative strategies include gradient-based optimization \cite{audibert2022accelerated} and Newton \cite{chang2024novel} (or Gauss–Newton type \cite{sini2012inverse}) methods, while direct methods include sampling methods \cite{colton1996simple,ji2018direct} and factorization methods \cite{kirsch2007factorization}, among others. Beyond these two paradigms, hybrid approaches that combine iterative updates with direct reconstruction ideas have also been proposed,  e.g., see \cite{li2020extended}. Fueled by rapid development of deep learning,  an increasing number of studies have begun employing the neural network approaches to investigate IOSP,  we refer to some representative works including  \cite{khoo2019switchnet,chen2024solving,yin2025tddm} and the review paper \cite{chen2020review}. Integrated schemes that combine classical computational methods with deep learning methods have also been proposed in \cite{ning2025direct}. The method developed in this work falls within the class of classical iterative reconstruction algorithms. 

In many practical applications the true geometry and material properties of a scatterer are rarely known exactly and may vary from one realization to another. Such variability can originate from manufacturing tolerances, small cracks or wear accumulated during operation and environmental effects such as temperature-dependent deformation and contamination. These factors introduce intrinsic uncertainty in  scattering mechanism which cannot be adequately captured by purely deterministic models. To account for these uncertainties, it is natural to introduce the random   model into the  problems of our interests. Indeed, this uncertainty-aware viewpoint has been adopted in several inverse problem settings, for instance, in inverse problems with random sources, the source term  is modeled by uncertain models including white-noise-type sources \cite{bao2014inverse} or the  generalized Gaussian random fields \cite{li2020inverse}; in inverse problems with random potentials the potential term is modeled by the Gaussian random field \cite{li2024inverse}. As far as we know,  most existing  inverse  studies have focused on randomness in the source, medium or potential terms whereas the  random obstacle scattering appears to remain largely open.  In a closely related direction,  \cite{bao2020inverse}  considered an inverse random periodic surface problem in  $\mathbb{R}^1$ where they modeled the  random grating  as a  Gaussian process parameterized by a Karhunen--Lo\`eve (KL) expansion, and recovered its statistical parameters via an MCCUQ-based approach. On the other hand, there is also a body of work devoted to forward scattering with uncertainties  including random gratings in $\mathbb{R}^1$ \cite{bao2018robust} and random obstacles in $\mathbb{R}^2$ \cite{xiu2007efficient}. These studies primarily target the computation of statistical quantities of the wave field, i.e., forward uncertainty quantification, but do not address the  inverse recovery.
In this paper we try to extend  the model framework of \cite{bao2020inverse} to a two-dimensional setting, formulate an inverse random obstacle scattering problem  and develop  a reconstruction scheme which is capable of  recovering the obstacle shape and inferring the statistical characterization of the random scatterer. 

 More specifically, we consider a two-dimensional star-shaped obstacle whose boundary is parameterized in polar coordinates by a radius function $r(\theta)$ with $\theta\in[0,2\pi):=\Theta$. We decompose  $r(\theta)$  into a deterministic baseline component and a random fluctuation where the latter random part will be  modeled as a one-parameter Gaussian process indexed by $\theta$. To reflect the geometry of the angular domain,  we construct a $2\pi$-periodic covariance function in which the correlations decay with the minimal angular separation on $\Theta$ and then apply a mild correction to ensure that the resulting kernel is mathematically well-defined. The random fluctuation is  subsequently represented by  a truncated Karhunen--Lo\`eve (KL) expansion, yielding  a finite-dimensional parametrization of the random geometry. On the computational side, we develop a two-stage reconstruction framework inspired by the multi-frequency recursive linearization approach (RLA) of Sini et al. \cite{sini2012inverse}: in the first stage, we reconstruct the  mean geometry  from multi-frequency far-field data; in the second stage, taking the first-stage reconstructions across realizations as input, we infer the statistical descriptors of the geometric uncertainty, including the KL spectrum and the covariance hyperparameters (the variance $\sigma^2$ and correlation length $\ell$). We also provide  theoretical analysis of the proposed modeling  and inversion algorithm. Numerical results are reported in the experiments section, while additional proofs and derivations are deferred to the appendix. The numerical experiments demonstrate that the proposed method can recover both the mean geometry and the associated  statistical quantities  with good accuracy for a range of test cases, including simple and relatively complex geometries. We summarize the main contributions of this work as follows:
\begin{itemize}
\item \textbf{Random obstacle model and covariance design in $\mathbb{R}^2$}. We propose a random geometric model for two-dimensional inverse obstacle scattering problems where the random boundary of the scatterer  is modeled as a Gaussian process on the angular parameter domain. We construct a $2\pi$-periodic covariance kernel adapted to this setting, ensuring that the induced Gaussian process is mathematically well-defined, physically consistent and compatible with a truncated KL parameterization.
\vspace{0.15cm}
\item \textbf{Two-stage reconstruction algorithm}. We develop a two-stage  framework that (i) reconstructs the mean geometry from multi-frequency data and (ii) estimates statistical descriptors of the boundary fluctuations, including the KL spectrum and covariance hyperparameters.
\vspace{0.15cm}
\item \textbf{Theoretical analysis}. We provide theoretical analysis of the proposed framework, including well-definedness of the Gaussian process model, convergence estimates for the two-stage algorithm and a brief  discussion on uniqueness.
 \end{itemize}
\vspace{0.25cm}

The rest of this paper is organized as follows. In \autoref{Problem setup and preliminaries}, we introduce the random obstacle model, including the Gaussian-process representation of the random obstacle, the covariance construction tailored to our periodic angular parameterization and the  Karhunen--Lo\`eve parameterization. In \autoref{Reconstruction method}, we present the proposed two-stage reconstruction algorithm: the first stage recovers the mean geometry based on multi-frequency recursive linearization scheme  while the second stage estimates the statistical parameters of the random scatterer. \autoref{Theoretical analysis} is devoted to the theoretical analysis where we mainly derive convergence estimates for the two-stage procedure, together with a brief  discussion on uniqueness. In \autoref{Numerical_experiments}, we report numerical experiments to validate the proposed approach on both simple and complex geometries. Additional proofs and other corresponding derivations are listed in \autoref{appendix}.

\section{Problem setup and preliminaries}\label{Problem setup and preliminaries}

\subsection{Deterministic direct and inverse scattering problem}
As illustrated in \autoref{schematic}, we consider a bounded obstacle $D \subset \mathbb{R}^2$  with $C^3$-smooth boundary $\partial D$.  The incident field is taken to be a time-harmonic acoustic plane  wave
\begin{equation}\label{01}
    u^i(x, d) = \mathrm{e}^{\mathrm{i} k x \cdot d}, \quad x \in \mathbb{R}^2,
\end{equation}
where $k > 0$ represents the wavenumber and $d \in \mathbb{S}^1$ denotes the direction of propagation.  The total field $u$ is defined as  $u(x) = u^i(x) + u^s(x)$ in $\mathbb{R}^2 \setminus \overline{D}$. Since the obstacle is assumed to be sound-soft, $u$ satisfies a homogeneous Dirichlet boundary condition on $\partial D$. It turns out the total field $u$ and the scattered field $u^s$ satisfy the exterior Helmholtz problem:
\begin{subequations}
\begin{align}
    \Delta u + k^2 u &= 0, \quad \text{in } \mathbb{R}^2 \setminus \overline{D}, \label{eq:helmholtz} \\
    u = u^s + u^i &= 0, \quad \text{on } \partial D, \label{eq:bc} \\
    \lim_{r\to\infty} \sqrt{r} \left( \frac{\partial u^s}{\partial r} - \mathrm{i} k u^s \right) &= 0, \quad r=|x|, \label{eq:src}
    \end{align}
\end{subequations}
 equation \eqref{eq:src} is the celebrated Sommerfeld radiation condition which guarantees the uniqueness of the solution and characterizes the outgoing nature of the scattered waves. It is a classical result that the boundary value problem \eqref{eq:helmholtz}--\eqref{eq:src} is well-posed provided that the boundary $\partial D$ is Lipschitz continuous \cite{mclean2000strongly}. Furthermore, the scattered field $u^s$ exhibits the following asymptotic behavior at infinity:
\begin{equation}\label{eq:farfield_expansion}
    u^s(x) = \frac{\mathrm{e}^{\mathrm{i}k|x|}}{\sqrt{|x|}} u^\infty(\hat{x},d,k) + O\left(|x|^{-3/2}\right), \quad |x| \to \infty, 
\end{equation}
where $\hat{x} := x/|x|$ denotes the observation direction. The function $u^\infty(\hat{x})$, which is analytic on the unit circle $\mathbb{S}^1$, is referred to as the \emph{far field pattern} of the scattered field \cite{colton1998inverse}.

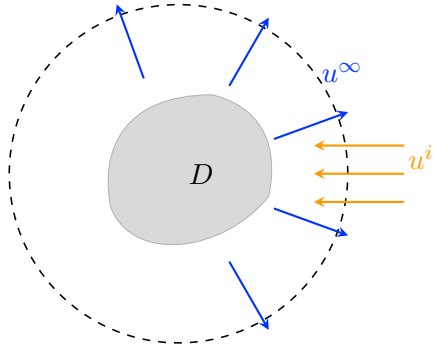
\begin{figure}[ht]
  \centering
  \begin{tikzpicture}[scale=1.5,>=stealth]

    \draw[black!60!black, dashed, line width=0.6pt] (0,0) circle (1.5);

    \begin{scope}[shift={(0.1,0)}]
      \draw[fill=gray!30, draw=gray!60]
        (0.2,0.7) .. controls (-0.6,0.7) and (-0.8,0.2) .. (-0.7,-0.3)
                  .. controls (-0.5,-0.8) and (0.3,-0.7) .. (0.7,-0.2)
                  .. controls (0.8,0.3) and (0.6,0.6) .. (0.2,0.7);
    \end{scope}
    \node at (0.2,0) {$D$};

    \foreach \y in {-0.25,0,0.25}{
      \draw[->, thick, orange!80!yellow]
        (2,\y) -- (1.2,\y);
    }
    \node[orange!80!yellow] at (2.15,0.15) {$u^i$};

    \foreach \ang in {-60,-20,20,60,110}{
      \draw[->, thick, blue!80!cyan]
        ({0.9*cos(\ang)},{0.9*sin(\ang)}) --
        ({1.6*cos(\ang)},{1.6*sin(\ang)});
    }
    \node[blue!80!cyan] at (1.45,0.9) {$u^\infty$};

  \end{tikzpicture}
 
  \caption{schematic of the inverse obstacle scattering problem with a single incident direction.}
   \label{schematic}
\end{figure}

The two-dimensional inverse obstacle scattering problem has been studied extensively both theoretically and computationally. Reconstruction methods are often distinguished by their data-acquisition paradigms. In single-frequency settings, measurements may be full-aperture with a single incident direction (e.g., \cite{kress2003newton}) or with multiple incident directions (e.g., \cite{colton1986novel}). Limited-aperture data, where both incident directions and the observation aperture are restricted, are substantially more challenging but practically relevant (see, e.g., \cite{yin2025physics}); a common remedy is to complete the missing full-aperture data numerically (e.g., \cite{liu2019data,dou2022data}), albeit at a high computational cost. Multi-frequency formulations have also been widely investigated as they can alleviate the ill-posedness of fixed-frequency reconstructions and improve stability \cite{bao2015inverse}. For multi-frequency shape reconstruction in two dimensions, one may use single-incident-direction data (e.g., \cite{sini2012inverse}) or multiple-incident-direction data (e.g., \cite{colton1985novel}). Motivated by operational feasibility when acquiring data from many incident angles is impractical or prohibitively time-consuming, in this work we focus on the single-incident-direction, multi-frequency setting and leave extensions to multiple incident directions for future study.  Accordingly, we assume that the available data consist of multi-frequency far-field measurements
\[
    \left\{ u^\infty(\hat{x}_m, d, k_j) \right\}_{m=1,\dots,M}^{j=1,\dots,n_K}
\]
generated by a fixed incident direction $d$, observation directions $\{\hat{x}_m\}_{m=1}^M\subset \mathbb S^1$ and wavenumbers $\{k_j\}_{j=1}^{n_K}\subset\mathbb Z_+$. Our goal is to reconstruct the obstacle boundary (geometry) from these measurements.


\subsection{Star–shaped random obstacle model and KL parameterization}\label{ranmo}

As noted earlier, in many practical applications the scatterer's geometry exhibits uncertainty due to manufacturing or environmental variations, hence introducing an appropriate random geometric model to study the associated forward and inverse problems is of significant theoretical and practical value. To make the resulting scattering problem amenable to iterative shape-reconstruction methods, we restrict our attention to star-shaped obstacles. More precisely, we assume that the deterministic scatterer $D$ is star-shaped with respect to  point $x_0$ so that its boundary can be parameterized in polar coordinates as 
\begin{equation}{\label{orsur}}
\partial D=\left\{x_0+r(\theta)(\cos \theta, \sin \theta): \theta \in\Theta:=[0,2 \pi)\right\},
\end{equation}
the radial function \( r \) is assumed to be positive on \( \Theta \) and satisfies \( r(0) = r(2\pi) \). To incorporate geometric uncertainties,  we introduce the following random perturbation model for star–shaped obstacles which can be represented as:
\begin{equation}\label{eq:star-shaped}
    \partial D(\omega)
    = \Big\{ x(\theta,\omega)
        = x_0+r(\theta,\omega)\,(\cos\theta,\sin\theta)
        : \theta\in\Theta
      \Big\},
\end{equation}
here $r(\theta,\omega)>0$ is a positive random radius function in direction $\theta$ and $\omega$  belongs to a given probability space  $(\Omega,\mathscr{F},\mathbb{P})$. The random radius $r(\theta,\omega)$ is characterized by the sum of a deterministic component $ r_{\mathrm{true}}(\theta) $ and a random fluctuation $\delta r(\theta,\omega)$: 
\begin{equation}\label{eq:r-decomp}
    r(\theta,\omega)
    =  r_{\mathrm{true}}(\theta) + \delta r(\theta,\omega).
\end{equation} 
We assume that  $\delta r(\theta,\omega)$ in \eqref{eq:r-decomp} is  a real-valued, zero-mean Gaussian process on $\Theta$ with a constant variance $\sigma^2$: 
\begin{gather}\label{eq:delta r}
\delta r : \Theta \times \Omega \to \mathbb{R}, \quad \theta \in\Theta,\quad \omega\in \Omega ,\\
\mathbb{E}[\delta r(\theta, \cdot)] = 0, \quad \mathbb{V}[\delta r(\theta, \cdot)] = \sigma^2, \quad \theta \in [0, 2\pi). \label{condi}
\end{gather}
In the regime of interest, we also assume the random perturbation $\delta r$ has sufficiently small amplitude relative to the true radius $r_{\text{true}}$ so that $r(\theta,\omega)$ stays positive and defines an admissible boundary for all $\theta$. Since a vast array of Gaussian processes meet the conditions \eqref{eq:delta r},\eqref{condi}, in order to tailor a process that aligns with our specific physical and statistical requirements of the two-dimensional scatterers, we therefore proceed to focus on the construction of covariance function. 

In our setting, the covariance function  of the Gaussian process should possess the following properties: first, it is required to be stationary on the angular domain, i.e., it depends only on the angular separation $d\left(\theta,\theta^{\prime}\right)$ rather than on the absolute angles $\theta$ and $\theta'$ themselves:
\begin{condition}[\textbf{Stationarity}]\label{sta1} 
    \begin{equation}\label{sta}
\operatorname{Cov}\left(\delta r(\theta), \delta r(\theta^{\prime})\right)=C\left(d(\theta,\theta^{\prime})\right)\xlongequal{d(\theta,\theta^{\prime})=t}{}C(t).
\end{equation}
\end{condition}
\noindent  To reflect the underlying physics on a circular geometry where the correlation between $\delta r(\theta)$ and $\delta r(\theta')$ should be  measured by the geodesic distance (i.e., the shortest arc distance), $d\left(\theta,\theta^{\prime}\right)$ in \eqref{sta} is defined through the  geodesic distance  by
\begin{condition}[\textbf{Shortest arc dependence}]\label{law}
\begin{equation}\label{dd}
     d(\theta,\theta'):=\min\{|\theta-\theta'|,\; 2\pi-|\theta-\theta'|\}\in[0,\pi].
\end{equation}
\end{condition}
\noindent Thus the angular difference $|\theta-\theta^{\prime}|>\pi$ are interpreted through the complementary shorter arc $2\pi-|\theta-\theta'|$. 
 Moreover, the covariance function is also required to satisfy the symmetry condition: 
\begin{condition}[\textbf{Symmetry}]\label{symm1}
    \begin{equation}\label{symm}
   C(t)=\mathbb{E}[\delta r(0) \delta r(t)]=\mathbb{E}[\delta r(t) \delta r(0)]=C(-t).
\end{equation}
\end{condition}

    \noindent Under Condition \autoref{sta1}, the requisite Condition \autoref{symm1} is automatically fulfilled. Condition \autoref{symm1} also implies that $C(t)$ is an even function (when viewed as a $2\pi$-periodic function on $\mathbb{R}$). Furthermore, since the underlying geometry is invariant under rotations by $2n\pi$ ($n \in \mathbb{Z}$), the statistical structure remains unchanged under such transformations; consequently, it is natural and indeed necessary for covariance function $C(t)$ to be $2\pi$-periodic as a function of the angular difference:
\begin{condition}[\textbf{Periodicity}]\label{peo1}
    \begin{equation}\label{peo}
    C(t)=C(2\pi+t).
\end{equation}
\end{condition}

\noindent  Finally, according to \emph{Daniell--Kolmogorov theorem} \cite{oksendal2013stochastic,baudoin2012lecture5}, a Gaussian process with a prescribed covariance function $\operatorname{Cov}\left(\delta r(\theta_i), \delta r\left(\theta_k\right)\right)=C(\theta_i-\theta_k)$ exists provided that the covariance kernel  $C$ is shown to be symmetric and positive semi-definite, therefore  in addition to Conditions \autoref{sta1}-\autoref{peo1}, we also require that:
\begin{condition}[\textbf{Positive semi-definiteness}]\label{pd1}
    \begin{equation}\label{pd}
     \sum_{i,k=1}^{n} b_i b_k C(\theta_i - \theta_k) \geq 0,
\end{equation}
\end{condition}\noindent
 for any finite set of angles $\{\theta_1, \dots, \theta_n\}$ and real coefficients $\{b_1, \dots, b_n\}$. 

A classical covariance model introduced in \cite{xiu2007efficient} for random scatterers on the angular domain $\Theta=[0,2\pi)$  is defined by
\begin{equation}\label{eq:cov-1d}
    C_{\mathrm{geod}}(t)
   =\sigma^2 \exp\!\Big(
        -\frac{1}{\ell^{2}}\min\{t,\,2\pi-t\}^{2}
      \Big),
    \quad  t\in\Theta,
\end{equation}
which can be viewed as applying the \emph{standard Gaussian 
covariance function}: 
\begin{equation}\label{standard} 
     C_f(\widetilde{t})
    = \sigma^2 \exp\!\Big(-\frac{\widetilde{t}^2}{\ell^2}\Big), \qquad \widetilde{t}\ge 0,
\end{equation}
to the shortest-arc distance on the circle, i.e., $\widetilde{t}=\min\{t,2\pi-t\}\in[0,\pi]$.
Here $\sigma^2$ is the variance and $\ell$ is the correlation length. Simple verification demonstrates that $C_{\mathrm{geod}}(t)$ complies with the Conditions \autoref{sta1}-\autoref{peo1} but fails to satisfy the constraint imposed by Condition \autoref{pd1}.
In fact, \cite{da2023gaussian} pointed out that if a Gaussian process defined on $\Theta$ has a kernel of the form 
\begin{equation}\label{shit}
     k(\theta_i, \theta_j) = \exp(-\zeta d(\theta_i,\theta_j)^2), \  \zeta > 0,
\end{equation}
with $  d(\theta_i, \theta_j)$ defined in \eqref{dd}, then for any fixed $\zeta > 0$ there exists a sufficiently large $N_\theta$ for which the Gram matrix generated from $N_\theta$ equally spaced points on $\Theta$ with kernel in \eqref{shit} must contain at least one negative eigenvalue. This implies that the positive semi-definiteness condition mentioned in \eqref{pd} for $C_{\mathrm{geod}}(t)$ is violated. To restore positive semi-definiteness and thereby obtain a valid covariance function, we modify \eqref{eq:cov-1d} and construct a corrected covariance function that satisfies Condition \autoref{pd1}. 
For a covariance function $C(t)$ satisfying Conditions \autoref{sta1}-\autoref{peo1}, it is $2\pi$-periodic and satisfies  $C(t)=C(-t)$ upon periodic extension to $\mathbb R$. Consequently, it admits a cosine Fourier series:
\begin{equation}\label{Fourier}
    C(t)=a_0+\sum_{j=1}^{\infty} a_j \cos (j t), \quad t \in[0,2 \pi] ,
\end{equation}
where
\begin{equation}\label{coeff}
    a_0=\frac{1}{2 \pi} \int_0^{2 \pi} C(t) \mathrm{d} t, \quad a_j=\frac{1}{\pi} \int_0^{2 \pi} C(t) \cos (j t) \mathrm{d} t, \quad j \geq 1.
\end{equation}
Meanwhile, we take a uniform grid on $\Theta$ given by $\theta_{m} = \frac{2\pi m}{N_{\theta}}$ for any $N_{\theta}$ and construct the Gram matrix $K_{mn}=C(\theta_m-\theta_n)$. As $C(t)$ depends only on the difference of the angles, $K_{mn}$ forms a circulant matrix. The following theorem will be of great utility:
\begin{theorem}\label{allpo}
Let $C(t)$ be a covariance function defined on $\Theta$ satisfying Conditions \autoref{sta1}-\autoref{peo1}. Suppose that $C(t)$ admits a cosine–series representation as in \eqref{Fourier} with real coefficients $a_j$ given by \eqref{coeff} and $\sum_{j\geq0}|a_j|<\infty$, then the following statements are equivalent:
\begin{enumerate}[label=(\roman*), ref=(\roman*)] 
    \item $C(t)$ is positive semi-definite in the sense of \eqref{pd}. \label{stmt:i}
    \item For every $N_\theta\geq 1$, letting $\theta_m = 2\pi m/N_\theta$ and $K=K^{(N_\theta)}_{mn}:=C(\theta_m-\theta_n)$, the circulant matrix $K^{(N_\theta)}$ is positive semi-definite.  \label{stmt:ii} 
    \item  All cosine coefficients in \eqref{Fourier} are nonnegative, i.e. \[a_j\geq 0, \quad \forall j\geq 0.\]  \label{stmt:iii}
\end{enumerate}  
In particular, under Conditions \ref{sta1}-\ref{peo1} together with any one of \ref{stmt:i}–\ref{stmt:iii}, $C(t)$ is a valid covariance function on $\Theta$ and hence there exists a zero-mean stationary Gaussian process $\{\delta r(\theta)\}_{\theta\in\Theta}$ with covariance $C(t)$.
\end{theorem}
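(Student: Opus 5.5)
We prove the cycle of implications (i)$\Rightarrow$(ii)$\Rightarrow$(iii)$\Rightarrow$(i), and then deduce the existence statement from the Daniell--Kolmogorov theorem as quoted before Condition \ref{pd1}.

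\emph{Step (i)$\Rightarrow$(ii).} This is immediate. The uniform grid $\{\theta_m\}$ is a particular finite set of angles, so \eqref{pd} with these angles is exactly the statement $b^\top K^{(N_\theta)} b\ge 0$ for all real vectors $b$.

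\emph{Step (ii)$\Rightarrow$(iii).} This is the main step. Since $K^{(N_\theta)}$ is a real symmetric circulant matrix, its eigenvectors are the discrete Fourier vectors $v^{(p)}_m=e^{\mathrm{i}p\theta_m}$. The corresponding eigenvalues are
\[
\lambda_p^{(N_\theta)}=\sum_{n=0}^{N_\theta-1} C(\theta_n)\cos(p\theta_n)\ge 0 .
\]
Dividing by $N_\theta$ and using $2\pi/N_\theta$ as the mesh width gives
\[
\frac{2\pi}{N_\theta}\lambda_p^{(N_\theta)}=\frac{2\pi}{N_\theta}\sum_{n=0}^{N_\theta-1} C(\theta_n)\cos(p\theta_n).
\]
This is a Riemann sum for $\int_0^{2\pi}C(t)\cos(pt)\,dt$, which equals $\pi a_p$ for $p\ge1$ and $2\pi a_0$ for $p=0$.

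Convergence of the Riemann sums needs some regularity of $C$. The assumption $\sum|a_j|<\infty$ makes the cosine series converge uniformly, so $C$ is continuous. Alternatively, one can avoid the limit altogether: substituting the series \eqref{Fourier} into the eigenvalue formula and using discrete orthogonality (aliasing) gives the exact identity
\[
\lambda_p^{(N_\theta)}=N_\theta\Big(\tilde a_p+\sum_{q\ne 0}\tilde a_{|p+qN_\theta|}\Big),
\]
where $\tilde a_0=a_0$ and $\tilde a_j=a_j/2$ for $j\ge1$. The interchange of sums is justified by absolute summability. The aliasing tail is bounded by $\sum_{j\ge N_\theta-p}|a_j|$, which tends to $0$ as $N_\theta\to\infty$ with $p$ fixed. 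Hence $0\le\lambda_p^{(N_\theta)}/N_\theta\to\tilde a_p$, and therefore $a_p\ge0$ for every $p\ge0$. The one point to watch is that all real eigenvalues are nonnegative even though the eigenvectors are complex. This is fine, because positive semidefiniteness of a real symmetric matrix is equivalent to nonnegativity of its spectrum.

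\emph{Step (iii)$\Rightarrow$(i).} Take arbitrary angles $\theta_1,\dots,\theta_n$ and real coefficients $b_1,\dots,b_n$. Using $\cos(j(\theta_i-\theta_k))=\cos j\theta_i\cos j\theta_k+\sin j\theta_i\sin j\theta_k$ and the absolute convergence of the series, we obtain
\[
\sum_{i,k}b_ib_kC(\theta_i-\theta_k)=a_0\Big(\sum_i b_i\Big)^2+\sum_{j\ge1}a_j\Big[\Big(\sum_i b_i\cos j\theta_i\Big)^2+\Big(\sum_i b_i\sin j\theta_i\Big)^2\Big].
\]
Every term on the right is nonnegative when all $a_j\ge0$, so \eqref{pd} holds.

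\emph{Existence.} Conditions \ref{sta1}--\ref{peo1} give symmetry and stationarity of the kernel, and the equivalence just proved gives positive semidefiniteness. These are exactly the hypotheses of the Daniell--Kolmogorov theorem, so there is a zero-mean Gaussian process on $\Theta$ with covariance $C(\theta-\theta')$. Its finite-dimensional distributions are the Gaussians $\mathcal N(0,(C(\theta_i-\theta_k))_{i,k})$, which form a consistent family. The process is stationary because its covariance depends only on $\theta-\theta'$, equivalently on $d(\theta,\theta')$ by evenness and periodicity of $C$.
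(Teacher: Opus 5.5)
Your proof is correct and follows essentially the same route as the paper. Both use the cycle (i)$\Rightarrow$(ii)$\Rightarrow$(iii)$\Rightarrow$(i), with the circulant matrix diagonalized by discrete Fourier vectors plus a Riemann-sum limit $N_\theta\to\infty$ for the middle step, and the identity $\sum_{i,k}b_ib_k\cos(j(\theta_i-\theta_k))=\bigl|\sum_i b_i e^{\mathrm{i}j\theta_i}\bigr|^2$ (your sum of two squares) for the last; your exact aliasing identity with tail bounded by $\sum_{j\ge N_\theta-p}|a_j|$, and your note that $\sum_j|a_j|<\infty$ makes $C$ continuous, tighten the paper's one-line Riemann-sum step without changing the argument.
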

\begin{proof}
    See \autoref{appen:Gaussexi}.
\end{proof}
\noindent From \autoref{allpo}, the failure of $C_{\mathrm{geod}}(t)$ to satisfy Condition~\autoref{pd1} can be traced to the fact that some cosine-series  coefficients in its expansion are negative. Since, under the uniform  discretization, these coefficients correspond to the eigenvalues of the associated circulant Gram matrix up to a constant scaling factor, this also implies that the discrete spectrum may contain a few small-magnitude eigenvalues that become negative.  At present, there is no closed-form characterization of how the first index $j$ with $\lambda_j^{\mathrm{geod}}<0$ depends on the grid size $N_\theta$ ,the conclusion in \cite{da2023gaussian} only states that $N_{\theta}$ needs to be large enough. To obtain a positive semi-definite covariance while preserving the qualitative shape of $ C_{\mathrm{geod}}(t)$, we work in the spectral domain. Compute the cosine coefficients of $ C_{\mathrm{geod}}(t)$ according to \eqref{coeff} yields:
\begin{equation}\label{origcoe}
    a_0^{\mathrm{orig}} = \frac{1}{2\pi} \int_0^{2\pi} C_{\mathrm{geod}}(t) \, dt,
    \qquad
    a_j^{\mathrm{orig}} = \frac{1}{\pi} \int_0^{2\pi} C_{\mathrm{geod}}(t) \cos(j t) \, dt, \quad j \geq 1.
\end{equation}
We then define a modified sequence $\{\widetilde{a}_j\}$ by projecting the negative entries to zero:
\begin{equation}\label{pro}
    \widetilde{a}_j := \max\left\{ a_j^{\mathrm{orig}},\, 0 \right\}, \quad j \geq 0.
\end{equation}
The corrected covariance function is finally defined as
\begin{equation}\label{correc}
     {C}_{\delta r}(t) = \widetilde{a}_0 + \sum_{j=1}^{\infty} \widetilde{a}_j \cos(j t), \quad t \in [0, 2\pi).
\end{equation}
By \autoref{allpo}, ${C}_{\delta r}(t)$ is positive semi-definite and defines a valid covariance function on $\Theta$. Furthermore, the correction does not compromise the convergence of the cosine-series representation in \eqref{Fourier} since it only replaces the negative coefficients by $0$ (i.e., applies the projection $a_j\mapsto \max\{a_j,0\}$) which  only decreases the coefficient magnitudes and  preserves  the absolute summability required for series convergence. Hence after the modification \eqref{origcoe}–\eqref{correc} the covariance kernel ${C}_{\delta r}(t)$  satisfies all the required  Conditions \autoref{sta1}-\autoref{pd1},  which ensures the existence of the associated Gaussian process according to  Daniell–Kolmogorov theorem.

With the definition of the covariance function ${C}_{\delta r}(t)$,  we now turn to the definition of corresponding covariance operator $\mathcal{C}$. Define a linear operator $\mathcal{C}$ on the space of functions
$\varphi(\theta)$,  $\theta\in\Theta$ which is  given by the integral:
\begin{equation}{\label{cooperator}}
    \mathcal{C} \varphi  ( \theta ) := \int_0^{2 \pi}  {C}_{\delta r} ( | \theta - \theta' | ) \, \varphi ( \theta ' ) \, d \theta '.
\end{equation}
Then we have the following eigenvalue problem for the operator $\mathcal{C}$:
\begin{equation}\label{eigenvalue problem}
   \mathcal{C}\varphi_j(\theta)= \lambda_j \varphi_j(\theta), \qquad \varphi_j\in L^2([0,2\pi)),
\end{equation}
where $\lambda_j$ is  eigenvalue of $\varphi_j(\theta)$. A straightforward computation shows that (details see \autoref{relationn})
\begin{gather}
\lambda_0 = 2\pi \, \widetilde{a}_0 = \int_{0}^{2\pi} {C}_{\delta r}(t) \, dt,\quad j=0,\label{guanxi1}\\
\mathcal{C} \varphi_{j, c}=\lambda_j \varphi_{j, c}, \quad \mathcal{C} \varphi_{j, s}=\lambda_j \varphi_{j, s}, \quad \lambda_j=\pi \widetilde{a}_j=\int_0^{2 \pi} {C}_{\delta r}(t) \cos (j t) \mathrm{d} t, \quad j\geq 1. 
    \label{coe}
\end{gather}
In order to parameterize the Gaussian process with covariance function $ _{\delta r}$, we employ the well-known Karhunen--Lo\`eve (KL) expansion \cite{sullivan2015introduction,feng2018efficient} to represent this stationary process $\delta r$:
\begin{equation}\label{KLexpan}
    \delta r(\theta, \omega)=\sqrt{\lambda_0} \xi_0(\omega) \phi_0(\theta)+\sum_{j=1}^{\infty} \sqrt{\lambda_j}\Big(\xi_{j, s}(\omega) \phi_{j, s}(\theta)+\xi_{j, c}(\omega) \phi_{j, c}(\theta)\Big),
\end{equation}
where $\varphi_0(\theta)$,\ $\varphi_{j,s}(\theta)$ and $\varphi_{j,c}(\theta)$ are the orthogonal eigenfunctions of \eqref{eigenvalue problem} with the form:
\begin{align}
\varphi_0(\theta)&=\frac{1}{\sqrt{2 \pi}}, \quad  j = 0 \label{var0}   \\ 
\varphi_{j, c}(\theta)&=\frac{1}{\sqrt{\pi}} \cos (j \theta), \quad j \geq 1, \\ 
\varphi_{j, s}(\theta)&=\frac{1}{\sqrt{\pi}} \sin (j \theta), \quad j \geq 1 ,\label{varjs}
\end{align}
and $\xi_0(\omega)$, $ \xi_{j, c}(\omega)$,  $\xi_{j, s}(\omega)$ are i.i.d. Gaussian random variables with zero mean and unit covariance. Owing to the rapid decay of the KL coefficients (see \autoref{hinvv}) and to simplify numerical implementation, it is common practice to truncate the infinite expansion \eqref{KLexpan} to a finite series:
\begin{equation}\label{trunKLexpan}
     \delta r(\theta, \omega)=\sqrt{\lambda_0} \xi_0(\omega) \phi_0(\theta)+\sum_{j=1}^{N_{\mathrm{KL}}} \sqrt{\lambda_j}\Big(\xi_{j, s}(\omega) \phi_{j, s}(\theta)+\xi_{j, c}(\omega) \phi_{j, c}(\theta)\Big),
\end{equation}
where $N_{\mathrm{KL}}$ is the truncation number. 

\begin{remark}\label{remark2}
Although the modification in \eqref{pro}--\eqref{correc} enforces positive semi-definiteness of the covariance kernel and thus renders the associated Gaussian process mathematically well-defined, its numerical impact is negligible in practice: the corrected kernel ${C}_{\delta r}(t)$ differs from the target kernel $C_{\mathrm{geod}}(t)$ only by a few high-frequency Fourier coefficients, while the low- and medium-frequency modes that dominate the boundary fluctuations remain essentially unchanged. 
This is consistent with the fact that potential violations of positive semi-definiteness arise primarily in the high-frequency regime: heuristically, since $C_{\mathrm{geod}}(t)$ is nonnegative and concentrates most of its mass near $t=0$ (decaying rapidly as $t$ increases), the integrals in \eqref{coe} are dominated by positive contributions for small/moderate $j$, whereas for large $j$ the rapid oscillations of $\cos(jt)$ lead to strong cancellations and hence to small (and more sign-sensitive) eigenvalues, which may even become negative.

This high-frequency sensitivity is further mitigated by the rapid spectral decay of the covariance operator induced by the Gaussian-type kernel. 
Indeed, the fast decay of the KL eigenvalues (see \autoref{hinvv}) implies that, in practice, the KL expansion in \eqref{trunKLexpan} can be safely truncated after only a small number of modes. 
Since our modification affects only a few tiny high-frequency coefficients, the target kernel $C_{\mathrm{geod}}(t)$ and its projected counterpart ${C}_{\delta r}(t)$ therefore produce practically identical truncated KL representations within the parameter regimes used in \eqref{trunKLexpan} (see \autoref{fig:covariance} and \autoref{fig:covariance10000}).

\end{remark}

\begin{figure}[htbp]
    \centering
    \begin{subfigure}{0.33\textwidth}
        \includegraphics[width=\linewidth]{ 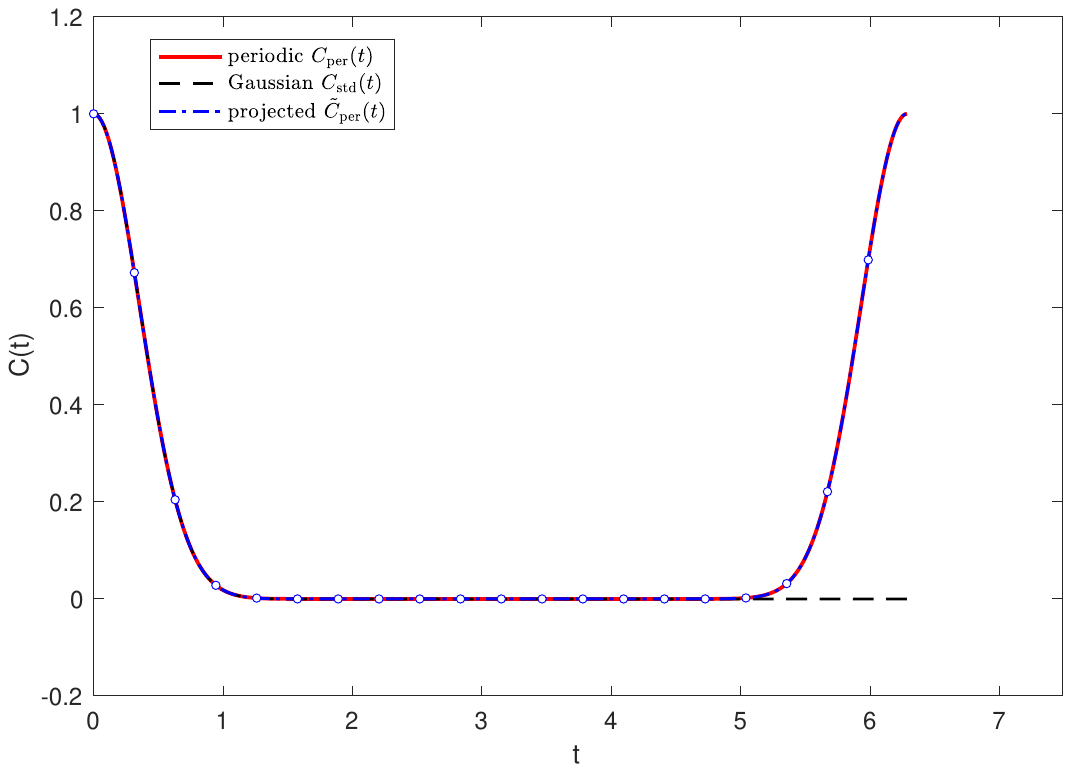}
        \caption{\small  covariance (50 terms)}
        \label{fig:covariance}
    \end{subfigure}
     \begin{subfigure}{0.33\textwidth}
        \includegraphics[width=\linewidth]{ 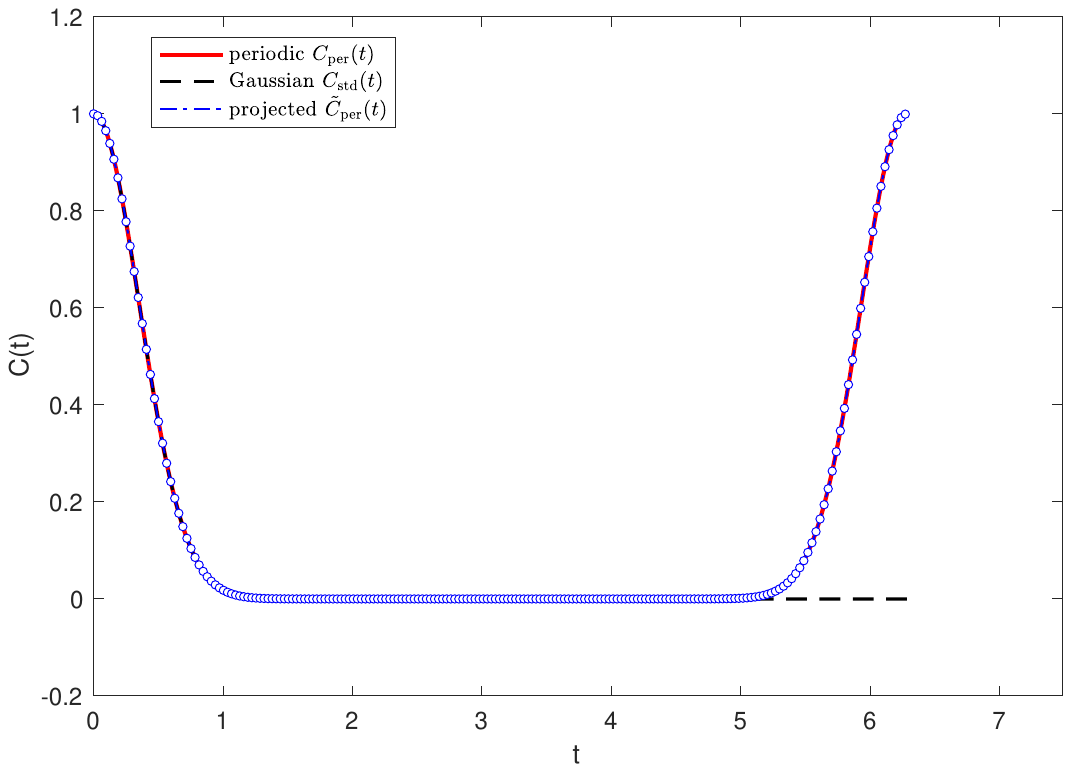}
        \caption{\small  covariance (10000 terms)}
        \label{fig:covariance10000}
    \end{subfigure}
     \begin{subfigure}{0.33\textwidth}
        \includegraphics[width=\linewidth]{ 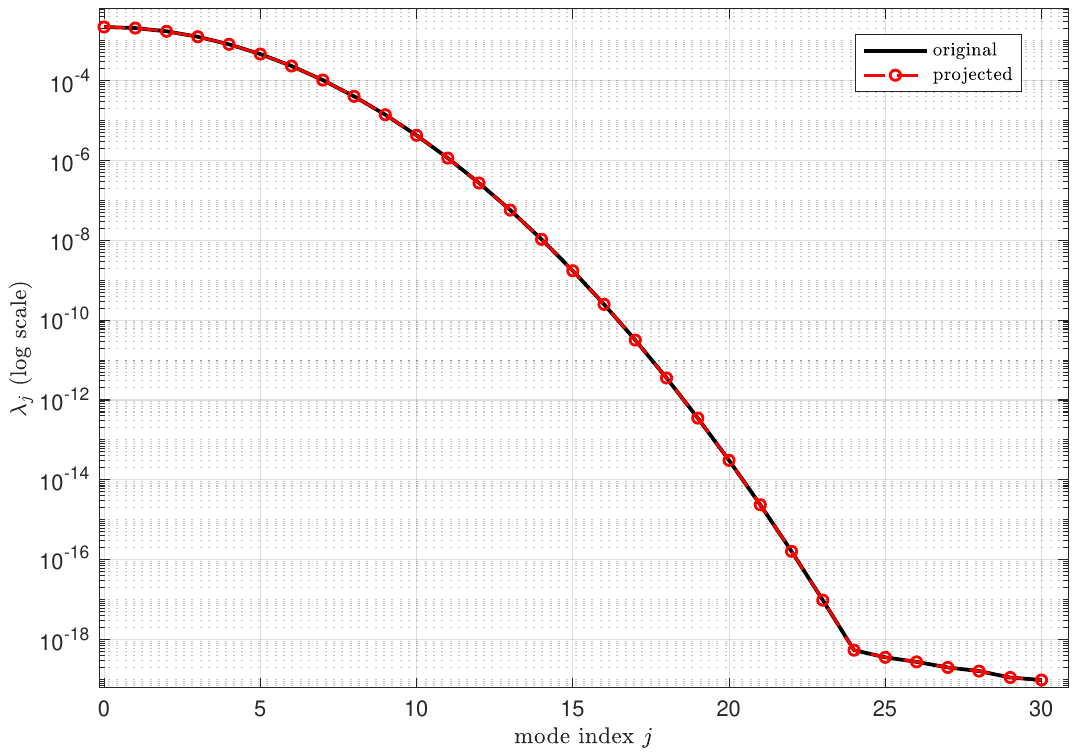}
        \caption{\small  $\lambda_j$ for operator $\mathcal{C}$} 
        \label{hinvv}
    \end{subfigure}
    \caption{\small \subref{fig:covariance}: 3 periodic covariance functions  with $\sigma=1$, $\ell = 0.5$, $N_{\theta}=400$ and 50 Fourier expansion coefficients $\{a_j\}$:  $C_{\mathrm{geod}}(t)$ (red solid line) defined in \eqref{eq:cov-1d};  $C_f(t)$ (black dashed line) defined in \eqref{standard}; $ {C}_{\delta r}(t)$ (blue dashed line with ``o" markers) defined in \eqref{correc}. \subref{fig:covariance10000}: 3 periodic covariance functions  with $\sigma=1$, $\ell = 0.5$, $N_{\theta}=40004$ and 10000 Fourier expansion coefficients $\{a_j\}$:  $C_{\mathrm{geod}}(t)$ (red solid line);  $C_f(t)$ (black dashed line); $ {C}_{\delta r}(t)$ (blue dashed line with ``o" markers).  \subref{hinvv}: decay of  first 30 $\lambda_j$ of the covariance operator $\mathcal{C}$ when $\sigma=0.05$, $\ell=0.5$ and $N_{\theta}=400$: $\lambda_j$   under $C_{\mathrm{geod}}(t)$ (black line); $\lambda_j$ under $ {C}_{\delta r}(t)$ (red line with ``o" markers).}
    \label{fig:KLcoe}
    \end{figure}




\begin{remark}
\label{remark1}
    According to the construction of the random scatterer in  \eqref{eq:star-shaped} and \eqref{eq:r-decomp}, our current  random modeling framework is temporarily confined to a specific subclass of star-shaped scatterers, that is, the boundary $\partial D$ must strictly satisfy the governing expression given in \eqref{orsur} which serves to ensure the mathematical consistency of the additive perturbation form in \eqref{eq:r-decomp}. Additionally, we also require the boundary admits a radial parameterization where the parameter $t$ equals the polar angle $\theta$ (or generally $t$ is linearly related to $\theta$). Typical examples include   \textbf{circles}:
\begin{equation}\label{circle}
    \partial D_{\mathrm{circle}}=\{x_0+R(cos(t),sin(t)),\qquad t \in [0, 2\pi)\}
    \end{equation}
     \textbf{pears}:
\begin{equation}\label{pear}
    \partial D_{\mathrm{pear}} = \left\{ x_0 + r_{\mathrm{pear}}(t) (\cos t, \sin t) : t \in [0, 2\pi) \right\},\quad r_{\mathrm{pear}}(t) = a + b \cos t + c \sin t,
    \end{equation}
    and \textbf{peanuts}:
    \begin{equation}
        \partial D_{\mathrm{peanut}} =\{ x_0+r_{\mathrm{peanut}}  (\cos t, \sin t) : t \in [0, 2\pi) \}, \quad r_{\mathrm{peanut}}(t)=\sqrt{\cos^2 t + 0.25 \sin^2 t}.
    \end{equation}
    etc. While some other star-shaped scatterers like \textbf{rounded square-shaped} obstacle:
\begin{equation}
    \partial D_{\mathrm{rounded\  square}} = \frac{3}{4} \left( \cos^3t + \cos t,\ \sin^3t  + \sin t \right), \quad t\in [0, 2\pi).
\end{equation}
    or \textbf{kites}:
   \begin{equation}
       \partial D_{\mathrm{kite}} = \left( \cos t+ 0.65(\cos 2t - 1),\ \sin t \right), \quad t \in [0, 2\pi).
   \end{equation}
    Their geometric representations in polar coordinates involve a nonlinear relationship between $t$ and $\theta$ which falls outside our present scope. Extending this inversion methodology to such nonlinear random scatterers represents an important direction for our future work.
    \end{remark}

\section{Reconstruction method}
\label{Reconstruction method}

In this section, we present a two-stage methodology for reconstructing the random scatterer's geometry.  The goal of Stage 1 is to aggregate these sample-wise reconstructions to obtain the mean shape, Stage 2 then estimates the statistical descriptors of the boundary fluctuations, namely the KL eigenvalues ${\lambda_j}$ in \eqref{trunKLexpan} and the covariance hyperparameters $(\sigma,\ell)$.


\subsection{The reconstruction of mean shape} \label{mean shape}
To recover the mean shape in the first stage, we adopt the reconstruction framework of \cite{sini2012inverse} which combines a low-frequency nonlinear least-squares step with a high-frequency recursive linearization approach (RLA). This hybrid strategy exploits complementary frequency regimes: the low-frequency data admit uniqueness guarantees (see, e.g., \cite{colton1983uniqueness}) while the high-frequency  refines fine geometric features and improves numerical stability. From a methodological viewpoint, Stage 1 only aims to realize  reasonably accurate sample-wise reconstructions to enable estimation of the mean geometry, thus our two-stage pipeline is not tied to the specific first-stage solver used here and  other shape reconstruction methods (e.g., continuation-based gradient methods \cite{li2016inverse}) can also be adopted. Once these reconstructions are available, the  baseline geometry for the scatterer $D$ is then obtained by   averaging  these sample-wise reconstructed  radial functions over the whole random realizations.

For a fixed $\omega$, we consider a star-shaped scatterer $D\subset\mathbb{R}^2$ whose boundary admits the radial parametrization \eqref{eq:star-shaped} with $\partial D\in C^3$. 
For each wavenumber $k$, we define the \emph{far-field operator} $\mathcal{F}(\cdot,k):\partial D \rightarrow C(\mathbb{S}^1)$ 
which assigns a radial function $r$ (equivalently, the boundary $\partial D$) to the corresponding far-field pattern $u^\infty$ of $u^s$ in  \eqref{eq:helmholtz}--\eqref{eq:src}. 
Its domain (Fr\'echet) derivative at $r$ in the  direction $h$ is defined by
\begin{equation}\label{Frechet}
\mathcal{F}'(r,k)(h):=\lim_{\varepsilon\to 0}\frac{\mathcal{F}(r+\varepsilon h,k)-\mathcal{F}(r,k)}{\varepsilon},
\end{equation}
where $h$ is a $2\pi$-periodic perturbation of the radial function. 
It is known that $\mathcal{F}'(r,k)(h)$ is a bounded injective linear operator and evaluating $\mathcal{F}'(r,k)(h)$ reduces to solving an auxiliary exterior Helmholtz problem with a specific boundary condition \cite{kirsch1993domain}. Let $u^{\infty,\delta}(\cdot,k)$ denote the measured far-field data corrupted by additive noise of level $\delta\ge 0$. 
Given $u^{\infty,\delta}(\cdot,k)$, the reconstruction at frequency $k$ is then formulated as a nonlinear least-squares problem
\begin{equation}\label{target}
G(r,k):=\frac{1}{2}\Big\|\mathcal{F}(r,k)-u^{\infty,\delta}(\cdot,k)\Big\|^2_{L^2(\mathbb{S}^1)}.
\end{equation}
A natural way to tackle \eqref{target} with multi-frequency data is through a \emph{frequency-continuation} strategy: we start from the lowest available wavenumber $k_l$, compute an initial reconstruction by minimizing the data-misfit functional \eqref{target} at $k_l$ and then use the obtained minimizer as the initial guess for the next higher wavenumber until reaching the maximum wavenumber $k_h$. In practice, however, objective $G(\cdot,k_l)$ are always highly nonconvex and expensive to minimize, so in order  
to mitigate these issues one often introduces Tikhonov regularization to \eqref{target}:
\begin{equation}\label{Tikhonov}
G_{\gamma}(r,k_l):=\frac{1}{2}\Big\|\mathcal{F}(r,k_l)-u^{\infty,\delta}(\cdot,k_l)\Big\|^2_{L^2(\mathbb{S}^1)} + \frac{1}{2}\gamma\|r\|^2_{\partial D},
\end{equation}
where $\gamma>0$ is a regularization parameter. 
At the low frequency $k_l$, the regularized objective $G_{\gamma}(r,k_l)$ in  \eqref{Tikhonov} is locally better behaved and local uniqueness results can be established under mild initialization conditions \cite{gintides2005local}.

However, the minimizer of \eqref{Tikhonov} typically captures only the coarse features of the boundary and is not expected to be highly accurate. We therefore refine this low-frequency estimate using higher-frequency data through the continuation procedure. At high-frequency we employ the Recursive Linearization Approach (RLA), originally developed in \cite{chen1997inverse} for medium scattering, we replace the nonlinear optimization at each new frequency in \eqref{target} by a locally linearized update. Specifically, RLA considers a sequence of wavenumbers $k_j = k_l + j\Delta k$, $j=0,1,\ldots,N$ with $\Delta k=(k_h-k_l)/N$. Starting from an initial reconstruction $r_0$ at $k_l$ (e.g., the solution of \eqref{Tikhonov}), the method advances from $k_j$ to $k_{j+1}$ by linearizing the far-field map $\mathcal{F}(\cdot,k_{j+1})$ around the current estimate $r_j$ through its first-order Taylor expansion:
\begin{equation}\label{Taylor}
    \mathcal{F}(r, k_{j+1}) = \mathcal{F}(r_j, k_{j+1}) +  \mathcal{F}'(r_j, k_{j+1})(r - r_j) + O(\|r - r_j\|_{\partial D}^2).
\end{equation}
Substituting this expansion into \eqref{target} linearizes the objective functional $G(r,k)$, the updated approximation is then sought as $r_{j+1}=r_j+\Delta r$ where the increment $\Delta r$ solves the following linear least-squares problem:
 \begin{equation}\label{RLAtarget}
     \Delta r_j := \arg\min_{\Delta r} \frac{1}{2} \left\| \mathcal{F}(r_j, k_{j+1}) + \mathcal{F}'(r_j, k_{j+1})( \Delta r)- u^{\infty,\delta}(\cdot,k_{j+1},r_j)\right\|_{L^2(\mathbb{S}^1)}^2.
 \end{equation}
To mitigate the inherent ill-posedness, we solve the regularized counterpart of \eqref{RLAtarget}which is defined by
\begin{equation}\label{RLATikhonov}
       \Delta r_j := \arg\min_{\Delta r} \bigg\{\frac{1}{2} \left\| \mathcal{F}(r_j, k_{j+1}) + \mathcal{F}'(r_j, k_{j+1})( \Delta r)- u^{\infty,\delta}(\cdot,k_{j+1},r_j)\right\|_{L^2(\mathbb{S}^1)}^2+\frac{1}{2}\alpha\|\Delta r\|^2_{\partial D}\bigg\}.
\end{equation}
Here $\alpha$ is the  regularization parameter. Applying the Levenberg-Marquardt method to solve the regularized problem \eqref{RLATikhonov} we obtain the following normal equation for $\Delta r_j$:
\begin{equation}\label{normal}
    (A_j^*A_j + \alpha I)\,\Delta r_j = - A_j^* g_j,
\end{equation}
with $ A_j := \mathcal{F}'(r_j,k_{j+1})$, $g_j := \mathcal{F}(r_j,k_{j+1})
           - u^{\infty,\delta}(\cdot,k_{j+1},r_j)$ and $A_j^*$ denoting the adjoint operator of $A_j$.
Then \eqref{normal} gives the  following increment $\Delta r_j$ as:
\begin{equation}\label{update}
     \Delta r_j
    = -(\alpha I + A_j^*A_j)^{-1}
      A_j^*g_j.
\end{equation}
Thus, the radius $r_{j+1}$ will be updated via
\begin{equation}\label{update}
    r_{j+1} = r_j +\Delta r_j=r_j- (\alpha I + A_j^* A_j)^{-1} A_j^* g_j.
\end{equation}
This recursive process continues until $k_j$ reaches  the highest wavenumber $k_h$. Note that the inversion process described above is conducted for a fixed $\omega$. For a set of random realizations $\{\omega_s\}_{s=1}^{N_s}$, we execute this process independently for each sample and record the corresponding results, then the empirical mean is obtained by taking the sample average over all realizations.

To transform the radial function into a finite set of unknown parameters, we expand $r$ into a Fourier series and truncate it at a finite order. Concretely, any radial function $r(\theta)$ satisfying $r(0)=r(2\pi)$ can be regarded as a $2\pi$-periodic function and thus admits the following Fourier expansion:
\begin{equation}\label{Foure}
    r(\theta) = a_0 + \sum_{m=1}^{\infty} \left( a_m \cos m\theta + b_m \sin m\theta \right),\qquad \theta\in\Theta,
\end{equation}
where $\{a_m,b_m\}$ are the Fourier coefficients.  In practical inversion, we retain only a finite number of modes $m=1,2,\cdots,N_r$, yielding the truncated approximation
\begin{equation}\label{eq:r-fourier-trunc}
r^{N_r}(\theta)
:= a_0+\sum_{m=1}^{N_r}\bigl(a_m\cos m\theta + b_m\sin m\theta\bigr),
\qquad \theta\in\Theta.
\end{equation}
For notational convenience,  by collecting all coefficients into a parameter vector 
\begin{equation}\label{fou}
    p = (p_1, \ldots, p_{n_r})^\top \in \mathbb{R}^{n_r}, \quad n_r = 2N_r + 1,
\end{equation}
with the convention  $p_1 = a_0$, $p_{2m} = a_m,$ $p_{2m+1} = b_m$, $ m = 1, \ldots, N_r$, then the truncated radius can then be written as  
\begin{equation} \label{eq:param-radius}
r(\theta, p) = p_1 + \sum_{m=1}^{N_r} \bigl( p_{2m} \cos m\theta + p_{2m+1} \sin m\theta \bigr), \qquad \theta \in \Theta.
\end{equation}
In this way, the inverse problem is transformed from finding the function \( r(\theta) \) into finding the parameter vector \( p \) in a finite-dimensional coefficient space.
The complete procedure is outlined in  Algorithm \ref{alg:mean-shape-random}. In Algorithm \ref{alg:mean-shape-random}, all updates, whether the low-frequency nonlinear iterations or the high-frequency recursive linearizations, act directly on \( p \) and the corresponding geometric shape is uniquely determined by \( p \) via equation \eqref{eq:param-radius}.

\begin{algorithm}[htbp]
\caption{First-stage RLA-based multi-frequency baseline shape reconstructing method}
\label{alg:mean-shape-random}
\begin{algorithmic}[1]
 \Require   noisy multi–frequency far–field data 
  $\{u^{\infty,\delta}(\hat x_m,d,k_j)\}_{m=1,\dots,M}^{j=1,\dots,n_K}$,  samples number $N_s$, incident direction $d$, observation directions $\{\hat x_m\}_{m=1}^M$, wavenumbers $\{k_j\}_{j=1}^{n_K}$ with $k_1<\dots<k_{n_K}$, shape parameterization $r(\theta,p)$,  forward map $\mathcal F(p,k)$ and its Fr\'echet derivative
  $\mathcal F'(p,k)$, regularization parameters $\gamma>0$, $\alpha>0$;
  tolerances \texttt{tol} and maximal inner iterations $N_{\mathrm{it}}$.

\State \textbf{Initialization.}
Choose an initial guess $p^{(0)}\in\mathbb R^{n_r}$ (e.g.\ choose a circle of radius $R_0$).

\Statex
\State \textbf{For each sample \(\omega_s\in\Omega\), reconstruct the deterministic geometry shape.}
\For{$s = 1,\dots,N_s$}
  \State Denote the far--field data of this sample by
  \[
    u^{\infty,\delta}_{j}(\hat x_m,\omega_s)
    := u^{\infty,\delta}(\hat x_m,d,k_j,\omega_s),
    \quad j=1,\dots,n_K,\; m=1,\dots,M.
  \]

  \State \textbf{Step 1: non-linear optimization at low frequency $k_1$.}
    \State  Start from $p^{(0)}$, minimize $G_\gamma(p,k_1)$ in \eqref{Tikhonov}
by any optimization method (e.g.\ Levenberg–Marquardt or
a gradient–based method) until $
  \|\nabla G_\gamma(p,k_1)\|
  \le \texttt{tol}
  \ \text{or}\ 
  \text{iteration} \ge N_{\mathrm{it}}.$
    \State Denote the resulting minimizer by $p_{1}^{(s)}$ and set
      $r_{1}(\theta,\omega_s):=r(\theta,p_{1}^{(s)})$.

  \Statex
  \State \textbf{Step 2: recursive linearization at higher frequencies.}
  \For{$j=1,\dots,n_K-1$}
    \State Set $k:=k_{j+1}$ and current parameter $p_{j}^{(s)}$.
    \State Compute the residual
    $ g_j:= \mathcal F(p_j,k)
        - u^{\infty,\delta}(\cdot,k).$
    \State Compute the update
      $\Delta p_j^{(s)}$ by solving the normal equation \eqref{normal}, update
      $p_{j+1}^{(s)} := p_{j}^{(s)} + \Delta p_j^{(s)}$ by \eqref{update} and set
      $r_{j+1}(\theta,\omega_s):= r(\theta,p_{j+1}^{(s)})$.
  \EndFor

  \State \textbf{Store final reconstruction for sample $\omega_s$.}
  \State Set $p^{\mathrm{rec}}(\omega_s):=p_{n_K}^{(s)}$ and
         $r^{\mathrm{rec}}(\theta,\omega_s)
           := r(\theta,p^{\mathrm{rec}}(\omega_s))$.
\EndFor

\Statex
\State \textbf{Sample mean shape.}
\State Define the reconstructed empirical mean radius by
\[
  \bar r^{\mathrm{rec}}(\theta)
  := \frac{1}{N_s}\sum_{s=1}^{N_s}
     r^{\mathrm{rec}}(\theta,\omega_s),
  \qquad \theta\in[0,2\pi).
\]

\State \textbf{return}
$\bar r^{\mathrm{rec}}(\theta)$ and
$\{r^{\mathrm{rec}}(\theta,\omega_s)\}_{s=1}^{N_s}$.
\end{algorithmic}
\end{algorithm}


\subsection{The reconstruction of random statistics}
\label{random statistics}

Following the inversion of  mean shape $\bar r^{\mathrm{rec}}(\theta)$ from the first stage, we can now advance to the second stage:  estimating the stochastic parameters based on the reconstructed results from  \autoref{mean shape}. To begin with,  we focus on the reconstruction of the coefficients $\{\lambda_j\}_{j=0}^{N_{KL}}$ in the KL expansion defined in \eqref{trunKLexpan}. According to the definition of $\lambda_j$ in \eqref{eigenvalue problem}, reconstructing these eigenvalues hinges on obtaining a representation of the integral operator $\mathcal{C}$ in \eqref{cooperator}. We begin this inversion by discretizing operator $\mathcal{C}$ on a chosen grid of suitable observation angles $\{\theta_l\}_{l=1}^{N_{\theta}}$, which is constructed by evaluating  the reconstructed shapes at these angles $\{\theta_l\}$ to form the reconstructed empirical covariance matrix. The eigenvalues are then retrieved by computing the singular value decomposition (SVD) of this matrix. Since Algorithm \ref{alg:mean-shape-random} is executed independently for each sample $\{\omega_s\}_{s=1}^{N_s}$, upon completion we obtain the inverted shape parameters $r^{\mathrm{rec}}(\theta,\omega_s)$ (or $p^{\mathrm{rec}}(\omega_s)$) at the highest wavenumber $k_{n_{K}}$ for each sample and their empirical mean reconstructing geometry $\bar r^{\mathrm{rec}}(\theta)$. Define the \emph{reconstructed stochastic fluctuation} $ \delta r^{\mathrm{rec}}(\theta, \omega_s)$ as
\begin{equation}\label{refluc}
    \delta r^{\mathrm{rec}}(\theta, \omega_s) := r^{\mathrm{rec}}(\theta, \omega_s) - \bar{r}^{\mathrm{rec}}(\theta).
\end{equation}
Given a set of discrete angles $\{\theta_l\}_{l=1}^{N_{\theta}}$, $ \delta r^{\mathrm{rec}}(\theta, \omega_s)$ is naturally cast into a matrix form:
\begin{equation}\label{reflucmatrix}
  R_{\mathrm{rec}} = \left[ \delta r^{\mathrm{rec}}(\theta_\ell, \omega_s) \right]_{\ell,s} \in \mathbb{R}^{N_\theta \times N_{s}}.
\end{equation}
Then the empirical covariance matrix is estimated  by
\begin{equation}\label{ecm}
   C_{\mathrm{rec}} := \frac{1}{N_{s} - 1} \, R_{\mathrm{rec}} \, R_{\mathrm{rec}}^\top\in\mathbb{R}^{N_\theta\times N_\theta}.
\end{equation}
The factor $1/(N_s-1)$ is the usual unbiased normalization so that $\mathbb{E}[C_{\mathrm{rec}}]$ coincides with the covariance matrix of the discretized fluctuations. We then solve the eigenvalue problem $C_{\mathrm{rec}} f_j=\mu_j f_j$ by eigendecomposition. In order to relate $\mu_j$ to the eigenvalues $\lambda_j$ of the continuous covariance operator $\mathcal{C}$, the eigenvalues $\mu_j$ of the discrete covariance matrix $C_{\mathrm{rec}}$ should be multiplied  by  the quadrature weight  $w=2\pi/N_\theta$.  This quadrature weight $w$ arises from the Nystr\"om discretization on a uniform grid which replaces the integral over $[0,2\pi]$ by a quadrature sum with weight $w$, hence we define the reconstructed discrete eigenvalues by
$ \lambda_j^{\mathrm{rec}} := w\,\mu_j$ 
and then proceed to utilize  $\{\lambda_j^{\mathrm{rec}}\}$ throughout the subsequent inversion procedure.



To fully characterize the stochastic Gaussian process, the final step of the inversion process involves estimating the covariance kernel parameters $\ell$ and $\sigma$. As established by the Fourier expansion of the covariance function $ C_{\delta_r}(t)$ in \eqref{Fourier}-\eqref{coeff} and the quantitative relationship \eqref{guanxi1}-\eqref{coe} between its coefficients $a_j$ and the eigenvalues $\lambda_j$, the complete information about the statistical parameters $(\ell,\sigma)$ is encoded within the eigenvalue spectrum $\{\lambda_j\}$. Therefore, we can infer these two parameters by utilizing the reconstructed eigenvalues $\{\lambda^{\mathrm{rec}}_j\}$. With the covariance function $C_f(t)$ defined in \eqref{standard} in stochastic periodic gratings in \cite{bao2020inverse}, an approximate relation between the eigenvalues $\lambda_j^{\mathrm{mod}}$ and the parameters $\sigma,\ell$ can be derived as follows:
\begin{equation}\label{relation}
\lambda_j^{\mathrm{mod}} \approx \sqrt{\pi} \sigma^2 \ell e^{-j^2 l^2 / 4}, \quad j = 0, 1, \dots
\end{equation}
Precisely, 
\begin{equation}\label{exactrelatio}
    \lambda_j^{\mathrm{mod}}
  = \sqrt{\pi}\,\sigma^2\ell\,e^{-\ell^2 j^2/4}
    + \mathcal{O}\!\bigl(\sigma^2\ell^2 e^{-\pi^2/\ell^2}\bigr).
\end{equation}
The detailed derivation of \eqref{exactrelatio} is shown in \autoref{appen:relation}. It is noted that this approximating relationship  \eqref{relation} or \eqref{exactrelatio} also holds for the covariance function $C_{\delta_r}$ in our setting, as ensured by the following Proposition:
\begin{Proposition}\label{relatio}
    The estimated  relation \eqref{exactrelatio} still holds when covariance function is $C_{\delta_r}(t)$.
\end{Proposition}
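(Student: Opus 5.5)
The plan is to compare the eigenvalues of $C_{\delta r}$ with those of $C_{\mathrm{geod}}$, and then compare those with the closed form $\sqrt{\pi}\sigma^2\ell e^{-\ell^2 j^2/4}$. By \eqref{guanxi1}--\eqref{coe}, the eigenvalues of $\mathcal C$ built from $C_{\delta r}$ are $\lambda_j=\pi\widetilde a_j$ for $j\ge1$ and $\lambda_0=2\pi\widetilde a_0$. The eigenvalues for $C_{\mathrm{geod}}$ are the same expressions with $a_j^{\mathrm{orig}}$ in place of $\widetilde a_j$. By \eqref{pro}, $\widetilde a_j=a_j^{\mathrm{orig}}$ whenever $a_j^{\mathrm{orig}}\ge 0$, and $\widetilde a_j=0$ otherwise. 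So the statement reduces to two facts:
\begin{enumerate}[label=(\alph*)]
\item \eqref{exactrelatio} holds for $\lambda_j^{\mathrm{geod}}$ with $C_{\mathrm{geod}}$;
\item at any index where $a_j^{\mathrm{orig}}<0$, the discrepancy $|\lambda_j^{\mathrm{geod}}-0|$ is itself $\mathcal O(\sigma^2\ell^2e^{-\pi^2/\ell^2})$, and so is $\sqrt{\pi}\sigma^2\ell e^{-\ell^2j^2/4}$ up to the admissible error.
\end{enumerate}

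For (a), I will compute $\lambda_j^{\mathrm{geod}}=\int_0^{2\pi}C_{\mathrm{geod}}(t)\cos(jt)\,dt$. Using evenness and periodicity, this equals $2\sigma^2\int_0^{\pi}e^{-t^2/\ell^2}\cos(jt)\,dt$. Extending the integral to $[0,\infty)$ gives exactly $\sqrt{\pi}\sigma^2\ell e^{-\ell^2j^2/4}$ by the standard Gaussian Fourier transform. The tail is bounded by
\[
2\sigma^2\int_\pi^\infty e^{-t^2/\ell^2}\,dt\le \sigma^2\frac{\ell^2}{\pi}e^{-\pi^2/\ell^2},
\]
using the Mills-ratio bound. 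This is presumably the argument of \autoref{appen:relation}, which I invoke directly; the case $j=0$ is handled identically with the factor convention of \eqref{guanxi1}.

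For (b), the key observation is that the main term is strictly positive. If $a_j^{\mathrm{orig}}<0$, then $\lambda_j^{\mathrm{geod}}<0$, and by (a),
\[
0<\sqrt{\pi}\sigma^2\ell e^{-\ell^2j^2/4}< -\bigl(\lambda_j^{\mathrm{geod}}-\sqrt{\pi}\sigma^2\ell e^{-\ell^2j^2/4}\bigr)\le c\,\sigma^2\ell^2e^{-\pi^2/\ell^2}.
\]
Hence for the corrected eigenvalue $\lambda_j=0$ we have
\[
|\lambda_j-\sqrt{\pi}\sigma^2\ell e^{-\ell^2j^2/4}|=\sqrt{\pi}\sigma^2\ell e^{-\ell^2j^2/4}\le c\,\sigma^2\ell^2e^{-\pi^2/\ell^2}.
\]
At indices where the coefficient is nonnegative, $\lambda_j=\lambda_j^{\mathrm{geod}}$ and (a) applies unchanged. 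This gives \eqref{exactrelatio} uniformly in $j$ with the same constant.

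I expect the main obstacle to be bookkeeping rather than analysis. First, the correct normalizations between $a_j$, $\lambda_j$ and the two integrals for $j=0$ versus $j\ge1$ must be tracked, together with a uniform-in-$j$ tail bound. Second, one should remark that $C_{\delta r}$ is defined via its cosine series, so the eigenvalue identity \eqref{coe} must be read with $\widetilde a_j$; this is legitimate by absolute summability and \autoref{allpo}. If a sharper tail constant is needed, I would fall back on the crude bound $\int_\pi^\infty e^{-t^2/\ell^2}dt\le \frac{\ell^2}{2\pi}e^{-\pi^2/\ell^2}$.
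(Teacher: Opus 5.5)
Your proof is correct. Its skeleton matches the paper's Appendix D: diagonalize both kernels in the Fourier basis, and identify $\lambda_j(C_{\mathrm{geod}})$ with $\sigma^2\int_{-\pi}^{\pi}e^{-t^2/\ell^2}e^{ijt}\,dt$. Then apply the Gaussian tail bound of Appendix C and compare $C_{\delta r}$ with $C_{\mathrm{geod}}$ index by index.

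The difference lies in the correction step. The paper only notes that $\lambda_j(C_{\delta r})-\lambda_j(C_{\mathrm{geod}})$ equals the coefficient change at index $j$. It then either assumes the negative coefficients occur only for $j\ge N_{\mathrm{KL}}$, or asserts that the perturbation is ``controlled by'' the coefficient perturbation. It never shows that this perturbation is of order $\sigma^2\ell^2e^{-\pi^2/\ell^2}$.

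Your sign argument supplies exactly that estimate. The main term $\sqrt{\pi}\sigma^2\ell e^{-\ell^2j^2/4}$ is positive, and $\lambda_j^{\mathrm{geod}}$ lies within $\frac{\sigma^2\ell^2}{\pi}e^{-\pi^2/\ell^2}$ of it. So if $\lambda_j^{\mathrm{geod}}<0$, both $|\lambda_j^{\mathrm{geod}}|$ and the main term are below that same bound, and projecting the eigenvalue to zero stays within the admissible error.

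This gives \eqref{exactrelatio} for $C_{\delta r}$ uniformly in all $j$, with the explicit constant $1/\pi$. It needs no hypothesis on where the projection acts, so it is strictly more than the paper's argument delivers.

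Two minor remarks. First, your closing ``fallback'' bound is the same Mills-ratio bound you already used, so that sentence is redundant. Second, you invoke absolute summability of $a_j^{\mathrm{orig}}$, as the paper does, without justifying it. It holds because $C_{\mathrm{geod}}$ is Lipschitz and piecewise smooth with a single kink at $t=\pi$, which gives $\mathcal{O}(j^{-2})$ coefficient decay.
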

\begin{proof}
    See \autoref{appen:Justification}.
\end{proof}
\noindent Thus, in the following process we will invert $(\sigma, \ell)$  based on  the relation \eqref{relation} with the reconstructed set $\{\lambda^{\mathrm{rec}}_j\}$. 

Due to the ill-posedness of the  two-dimensional inverse scattering problem, unlike the one-dimensional case where only the first two modes $\{\lambda_0^{\mathrm{mod}},\lambda_1^{\mathrm{mod}}\}$ are considered, we utilize the first $N_{\mathrm{KL}}$ eigenvalues jointly  to invert for the parameters $\sigma$ and $\ell$ where $N_{\mathrm{KL}}$ is a prescribed selected number. By Proposition \autoref{relatio}, relation \eqref{relation} still holds in our problem settings. Taking the logarithm on both sides of \eqref{relation} yields
\begin{equation}\label{loggre}
    \log \lambda_j^{\mathrm{mod}} \approx \log(\sqrt{\pi} \sigma^2 \ell) - \frac{\ell^2}{4} j^2,
\end{equation}
With a slight abuse of notation, we can regard \eqref{loggre} as a linear model of the form $y=A-Bx$  where $y_j:= \log \lambda_j^{\mathrm{mod}}$ and $x_j:=j^2$. The coefficients are given by the intercept $A=\log(\sqrt{\pi} \sigma^2 \ell)$ and the slope $B= \frac{\ell^2}{4}$. To estimate these coefficients  $A$ and $B$ from  the available data points $(x_j,y_j)$, we employ a \emph{linear least-squares approach} which involves minimizing the sum of squared residuals defined by the objective function: 
\begin{equation}\label{fitting}
\mathcal{J}(A, B) := \sum_{j =0}^{N_{\mathrm{KL}}} \left( y_j - (A - B x_j) \right)^2 = \sum_{j =0}^{N_{\mathrm{KL}}} \left( \log \lambda_j^{\mathrm{mod}} - A + B j^2 \right)^2,
\end{equation}
which is to determine
\begin{equation}\label{AB}
    (A_{\mathrm{fit}},B_{\mathrm{fit}}) := \arg\min_{A,B} \mathcal{J}(A, B).
\end{equation}
 Once we get $A_{\mathrm{fit}}$ and $B_{\mathrm{fit}}$, the correlation length $\ell$ is first recovered from the slope $B_{\mathrm{fit}}$:
\begin{equation}\label{ell}
  B_{\mathrm{fit}} = \frac{\ell^2}{4} \quad \Longrightarrow \quad \ell_{\mathrm{est}} = \sqrt{\max\{4 B_{\mathrm{fit}},0\}},
\end{equation}
and the strength parameter $\sigma$ is determined from the intercept $A_{\mathrm{fit}}$ using the relation
\begin{equation}\label{sigma}
    A_{\mathrm{fit}} = \log(\sqrt{\pi} \, \sigma^2 \ell_{\mathrm{est}}) \quad \Longrightarrow \quad \sigma_{\mathrm{est}}^2 = \frac{e^{A_{\mathrm{fit}}}}{\sqrt{\pi} \, \ell_{\mathrm{est}}}, \qquad
\sigma_{\mathrm{est}} = \sqrt{ \frac{e^{A_{\mathrm{fit}}}}{\sqrt{\pi} \, \ell_{\mathrm{est}}} }.
\end{equation}
This completes the specification of the second-stage inversion, with Algorithm \ref{alg:random-stats} encapsulating the entire workflow from the eigenvalue reconstruction to the estimation of $(\sigma_{\mathrm{est}},\ell_{\mathrm{est}})$. These results set the stage for the theoretical analysis in the next section and for the numerical experiments reported thereafter.

\begin{algorithm}[htbp]
  \caption{Second-stage  random statistics reconstructing method}
  \label{alg:random-stats}
  \begin{algorithmic}[1]
    \Require
      Number of samples $N_s$; angular grid $\{\theta_i\}_{i=1}^{N_\theta}$;
      RLA reconstructions $\{p^{(s)}\}_{s=1}^{N_s}$ at the highest wavenumber $k_{\max}$ (Stage~1 output); $N_{\mathrm{KL}}$ for the number of KL modes to be used in the fitting process.

    \State \textbf{Step 1: define the discrete random fluctuation.}
    \For{$s = 1,\dots,N_s$}
       \State Compute the reconstructed radius
              $r_{\mathrm{rec}}^{(s)}(\theta_i)
                := r(\theta_i,p^{(s)})$ for all $i=1,\dots,N_\theta$
              using the Fourier parameterization~\eqref{eq:param-radius}.
       \State Store $\mathbf r^{(s)} = \bigl(r_{\mathrm{rec}}^{(s)}(\theta_1),\dots,
                           r_{\mathrm{rec}}^{(s)}(\theta_{N_\theta})\bigr)^\top$.
    \EndFor
    \State Compute the empirical mean shape
           $\displaystyle \bar{\mathbf r}_{\mathrm{rec}}
              = \frac{1}{N_s}\sum_{s=1}^{N_s}\mathbf r^{(s)}$.
    \State Form the fluctuation matrix
           $R_{\mathrm{rec}} = [\mathbf r^{(1)}-\bar{\mathbf r}_{\mathrm{rec}},\dots,
                                \mathbf r^{(N_s)}-\bar{\mathbf r}_{\mathrm{rec}}]
              \in\mathbb{R}^{N_\theta\times N_s}$ as defined in \eqref{refluc}.

    \Statex
    \State \textbf{Step 2: form empirical covariance and invert KL coefficients.}
    \State Define the empirical covariance matrix \eqref{ecm}.
          
    \State Utilize SVD to compute eigenvalues $\{\mu_j\}$ and obtain $\{\lambda^{\mathrm{rec}}_j\}$.
    \State Sort the eigenvalues in descending order and reorder the eigenvectors accordingly:
           $\lambda^{\mathrm{rec}}_1 \ge \lambda^{\mathrm{rec}}_2 \ge \cdots$.
    \State (Optional) Group eigenvalues corresponding to the same angular frequency
           (e.g.\ $\cos(j\theta)$ and $\sin(j\theta)$) by averaging the
           pairs $\{\lambda^{\mathrm{rec}}_{2j},\lambda^{\mathrm{rec}}_{2j+1}\}$.

    \Statex
    \State \textbf{Step 3: linear least–squares fitting of $(\ell,\sigma)$ from KL eigenvalues.}
    \State Select the first $N_{\mathrm{KL}}$ grouped empirical eigenvalues
           $\{\lambda^{\mathrm{rec}}_j\}_{j=0}^{N_{\mathrm{KL}}}$.
    \State Define the objective function \eqref{fitting}.
    \State Find $ (A_{\mathrm{fit}},B_{\mathrm{fit}})$ by solving \eqref{AB} and estimate \eqref{ell} and \eqref{sigma} with solution $ (A_{\mathrm{fit}},B_{\mathrm{fit}})$.
    
    \State \textbf{return} reconstructed eigenvalues
           $\{\lambda^{\mathrm{rec}}_j\}$ and
           parameter estimates $(\sigma_{\mathrm{est}},\ell_{\mathrm{est}})$.
  \end{algorithmic}
\end{algorithm}

\section{Theoretical analysis}\label{Theoretical analysis}

In this section, we present a theoretical analysis of the proposed random model and the associated two-stage reconstruction algorithm. We start with a brief discussion on uniqueness, clarifying how  deterministic theory supports the  mean-shape recovery in Stage 1 and how  statistical parameters are (locally) identifiable in Stage 2. Subsequently we focus on the convergence analysis:  we first derive error bounds for the second-stage recovery of the covariance spectrum and the associated hyperparameter estimates, then for context   we  review the RLA convergence result of \cite{sini2012inverse} for the first-stage reconstruction and in the end we  discuss several remaining challenges and open issues in establishing an 	overall coupled convergence theory for our two-stage pipeline. 

\subsection{Uniqueness}
Concerning uniqueness of the mean geometry, we note that the first-stage reconstruction is carried out sample-wise: for each realization $\omega$ it reduces to a deterministic inverse obstacle scattering problem. Uniqueness in  deterministic setting has been extensively studied:  for sound-soft obstacles, multi-frequency data for a single incident direction uniquely determine the scatterer both for a continuum of wavenumbers \cite{ramm1999multidimensional} and for finitely many wavenumbers \cite{colton1983uniqueness}; at a fixed wavenumber, single-frequency uniqueness  also holds for important geometric classes such as polygonal \cite{cheng2004global} and polyhedral \cite{alessandrini2005determining} scatterers. These results cover the adopted measurements in this paper and provide a deterministic uniqueness backdrop for the mean-shape reconstruction in the first stage.


For the uniqueness of the second-stage KL eigenvalues and the covariance parameters, under the random obstacle model introduced  in \autoref{ranmo}  the Gaussian random field is uniquely determined in law once its mean function \eqref{condi} and covariance function $C_{\delta r}$ in \eqref{correc} are specified. These specifications in turn uniquely determine the covariance operator  $\mathcal{C}$ in \eqref{cooperator} whose spectrum is thus uniquely determined up to ordering (counting multiplicities). The  covariance hyperparameters $(\sigma,\ell)$ are recovered from the estimated spectral quantities through a deterministic spectrum-to-parameter map and thus are uniquely determined whenever this map is locally  invertible on its range. These required nondegeneracy conditions are precisely those assumptions that will be introduced in the subsequent convergence analysis. 


\subsection{Convergence of eigenvalues}
 Next we turn to the  convergence analysis of the proposed two-stage inversion algorithm.  To begin with, we first provide a convergence estimate for the second-stage reconstruction of the KL eigenvalues. 
 Let $N_\theta$ be the number of discrete grid points $\{\theta\}_{i=1}^{N_{\theta}}$ on the scatterer and $N$ be the number of available samples. For each sample $s=1,\dots,N$, denote  
     the \emph{true fluctuation (discretised on the grid) } by $x^{(s)} \in \mathbb{R}^{N_{\theta}}$ and  the corresponding \emph{fluctuation obtained from the first stage reconstruction} by $\widehat x^{(s)} \in \mathbb{R}^{N_{\theta}}$. Define the reconstruction error $ \widetilde e^{(s)} := \widehat x^{(s)} - x^{(s)} \in \mathbb{R}^{N_{\theta}}$ and two empirical covariance matrices:
    \begin{equation}\label{twoco}
        \underbrace{\widehat{\Sigma}_N:=\frac{1}{N-1} \sum_{s=1}^N x^{(s)} x^{(s)\top}}_{\mathrm{empirical\ \textbf{true} \ covariance \ matrix}}, \quad \underbrace{\widehat{\Sigma}_N^{\mathrm{rec}}:=\frac{1}{N-1} \sum_{s=1}^N \widehat{x}^{(s)} \widehat{x}^{(s) \top}}_{\mathrm{empirical \ \textbf{reconstructed} \  covariance \ matrix}},
    \end{equation}
    let $\mu_j(\widehat{\Sigma}_N):=\widehat{\mu}_j$, $\mu_j(\widehat{\Sigma}_N^{\mathrm{rec}}):=\widehat{\mu}_j^{\mathrm{rec}}$ are their eigenvalues, respectively.  The result below indicates that the high-performance inversion of the eigenvalues for the empirical matrices in \eqref{twoco} relies on a sufficiently accurate geometric reconstruction from  Stage 1:
\begin{theorem}
\label{convergence}
     Assume there exist constants $M_x>0 $ and $  \varepsilon>0$ such that 
    \begin{equation}\label{assumption1}
        \frac1N\sum_{s=1}^N\|x^{(s)}\|_2^2 \le M_x,\qquad
\frac1N\sum_{s=1}^N\|\widetilde e^{(s)}\|_2^2 \le \varepsilon^2,
    \end{equation}
   then the difference of sample empirical covariance matrices in \eqref{twoco} admit the following error bound:
   \begin{equation}\label{coexti}
       \Bigl\|\widehat{\Sigma}_N^{\mathrm{rec}}-\widehat{\Sigma}_N\Bigr\|_2
\;\le\;\frac{N}{N-1}\Bigl(2\sqrt{M_x}\,\varepsilon+\varepsilon^2\Bigr),
   \end{equation} consequently for all $j$, their eigenvalues  satisfy:
   \begin{equation}\label{coeest}
       \Bigl|\widehat{\mu}_j^{\mathrm{rec}}-\widehat{\mu}_j\Bigr|
\;\le\;\frac{N}{N-1}\Bigl(2\sqrt{M_x}\,\varepsilon+\varepsilon^2\Bigr).
   \end{equation}
\end{theorem}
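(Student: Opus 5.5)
We expand the reconstructed outer products around the true ones. Since $\widehat x^{(s)} = x^{(s)} + \widetilde e^{(s)}$, each term splits as
\begin{equation*}
\widehat x^{(s)}\widehat x^{(s)\top} - x^{(s)}x^{(s)\top} = x^{(s)}\widetilde e^{(s)\top} + \widetilde e^{(s)}x^{(s)\top} + \widetilde e^{(s)}\widetilde e^{(s)\top}.
\end{equation*}
Summing over $s$ and multiplying by $1/(N-1)$ gives
\begin{equation*}
\widehat{\Sigma}_N^{\mathrm{rec}}-\widehat{\Sigma}_N=\frac{1}{N-1}\sum_{s=1}^N\Bigl(x^{(s)}\widetilde e^{(s)\top}+\widetilde e^{(s)}x^{(s)\top}+\widetilde e^{(s)}\widetilde e^{(s)\top}\Bigr).
\end{equation*}
We then bound the spectral norm of each of the three sums separately using the triangle inequality.

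For a rank-one matrix, $\|ab^\top\|_2=\|a\|_2\|b\|_2$. Applying this to each cross term gives
\begin{equation*}
\Bigl\|\sum_s x^{(s)}\widetilde e^{(s)\top}\Bigr\|_2\le\sum_s\|x^{(s)}\|_2\|\widetilde e^{(s)}\|_2 .
\end{equation*}
The Cauchy--Schwarz inequality in $s$ then bounds the right-hand side by
\begin{equation*}
\Bigl(\sum_s\|x^{(s)}\|_2^2\Bigr)^{1/2}\Bigl(\sum_s\|\widetilde e^{(s)}\|_2^2\Bigr)^{1/2}\le \sqrt{NM_x}\,\sqrt{N}\varepsilon=N\sqrt{M_x}\,\varepsilon,
\end{equation*}
where the last step uses \eqref{assumption1}. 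The transposed cross term has the same bound. The quadratic term satisfies $\|\sum_s\widetilde e^{(s)}\widetilde e^{(s)\top}\|_2\le\sum_s\|\widetilde e^{(s)}\|_2^2\le N\varepsilon^2$. Dividing the three bounds by $N-1$ and adding them yields \eqref{coexti}.

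For the eigenvalue statement \eqref{coeest}, note that both $\widehat{\Sigma}_N$ and $\widehat{\Sigma}_N^{\mathrm{rec}}$ are real symmetric. With their eigenvalues ordered decreasingly, Weyl's perturbation inequality (a consequence of the Courant--Fischer min-max characterization) gives
\begin{equation*}
|\mu_j(A)-\mu_j(B)|\le\|A-B\|_2 \quad\text{for every } j .
\end{equation*}
Combining this with \eqref{coexti} gives \eqref{coeest} immediately.

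None of the steps is a real obstacle. The only points needing care are to use Cauchy--Schwarz in the sample index, rather than bounding $\max_s\|x^{(s)}\|_2$, so that only the averaged assumptions \eqref{assumption1} are needed, and to order the eigenvalues consistently so that Weyl's inequality applies index by index.
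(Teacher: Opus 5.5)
Your proof is correct and follows essentially the same route as the paper: expand $\widehat x^{(s)}=x^{(s)}+\widetilde e^{(s)}$, bound the two cross terms by the triangle inequality, the rank-one norm identity and Cauchy--Schwarz in the sample index, bound the quadratic term by $N\varepsilon^2$, and finish with Weyl's inequality. The only cosmetic difference is that the paper first passes to the Frobenius norm via $\|M\|_2\le\|M\|_F$; this yields the same constants because rank-one matrices have equal spectral and Frobenius norms.
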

\begin{proof}
    See \autoref{appen:proofconvergence}.
\end{proof}
\noindent  For later use we denote  the total error bound $\frac{N}{N-1}\Bigl(2\sqrt{M_x}\,\varepsilon+\varepsilon^2\Bigr)$ on the right hand side of \eqref{coeest}  as 
   \begin{equation}
       \frac{N}{N-1}\Bigl(2\sqrt{M_x}\,\varepsilon+\varepsilon^2\Bigr):=E^{(\mu)}_{\mathrm{eig}} (N,\varepsilon,M_x),
   \end{equation}
 $E^{(\mu)}_{\mathrm{eig}}(N,\varepsilon,M_x)$ represents the error $E^{(\mu)}_{\mathrm{eig}}$ related to $N$,  $\varepsilon$ and $M_x$. \autoref{convergence} states that the accuracy of the  KL eigenvalues at Stage 2 is automatically ensured  provided the first-stage shape reconstruction meets the prescribed constraints with sufficient fidelity: the first constraint in \eqref{assumption1} specifies that each scatterer is a bounded obstacle while the second  constraint requires that the geometry reconstructed in the first stage is in close proximity to the true shape. 

 To further quantify the discrepancy between the discrete spectrum and its continuous counterpart, we now incorporate the   uniform quadrature weight $w:=2\pi/N_\theta$ and define
    \begin{equation}\label{lamb}
\lambda_j(\widehat{\Sigma}_N):=\widehat{\lambda}_j=w\,\widehat{\mu}_j,\qquad \lambda_j(\widehat{\Sigma}_N^{\mathrm{rec}}):=\widehat{\lambda}_j^{\mathrm{rec}}=w\,\widehat{\mu}_j^{\mathrm{rec}},
    \end{equation}
 through multiplication of $\widehat{\mu}_j$ and $\widehat{\mu}_j^{\mathrm{rec}}$ by $w$.  Combining \eqref{coeest} and \eqref{lamb}, naturally we get the following error estimate:
\begin{equation}\label{bound1}
    \left|\widehat{\lambda}_j^{\mathrm{rec}}-\widehat{\lambda}_j\right|\leq wE_{\mathrm{eig}}^{(\mu)}(N,\varepsilon,M_x)=:E_{\mathrm{eig}}(N,\varepsilon,M_x).
\end{equation} 
As  stated in \eqref{exactrelatio}, we denote  
\begin{equation}\label{lammod}
    \lambda_{j}^{\mathrm{mod}}=\sqrt{\pi}\,{\sigma}^2\ell\,e^{-{\ell}^2 j^2/4},\qquad \lambda_{j}^{\mathrm{mod}}(\sigma^\ast,\ell^\ast)=\sqrt{\pi}\,{\sigma^\ast}^2\ell^\ast\,e^{-{\ell^\ast}^2 j^2/4},
\end{equation}
 where $(\sigma^\ast,\ell^\ast)$ represents the true covariance parameters. Note that the definitions of the empirical covariance $\widehat{\Sigma}_N$, $\widehat{\Sigma}_N^{\mathrm{rec}}$ in \eqref{twoco} are both dependent on the sample size $N$ (the same holds for $\widehat{\lambda}_j$, $\widehat{\lambda}_j^{\mathrm{rec}}$). For the true discrete covariance Gram matrix $K=\{K_{mn}\}\in\mathbb{R}^{N_\theta\times N_\theta}$ appearing in \autoref{allpo} which is independent of the sample size $N$,  its eigenvalues  are given by the DFT of its first column since $K$ is circulant, we then define its corresponding eigenvalues $\{\mu_j^{\mathrm{samp}}\}$ and $\{\lambda^{\mathrm{samp}}_j\}$ as  
\begin{equation}\label{lamsam}
    \mu^{\mathrm{samp}}_j=\sum_{m=0}^{N_\theta-1} C(t_m)\,e^{-2\pi i jm/N_\theta},\quad
\lambda^{\mathrm{samp}}_j:=w\,\mu^{\mathrm{samp}}_j,\ \ w=\frac{2\pi}{N_\theta},\ \ t_m=\frac{2\pi m}{N_\theta},\ \forall j=0,\ldots,N_\theta-1.
\end{equation}

With these notations, we now aim to show the error estimates between $\widehat{\lambda}_j$ in \eqref{twoco} and $\lambda_j^{\mathrm{mod}}(\sigma^*,\ell^*)$ in \eqref{lammod}, which will play an important role in our subsequent analysis. For the fixed number of the selected modes $N_{\mathrm{KL}}$, we claim that there exists a finite modeling error $E_{\mathrm{mod}}>0$ such that
\begin{equation}\label{bound2}
    |\widehat{\lambda}_j-\lambda_j^{\mathrm{mod}}(\sigma^*,\ell^*)|\leq E_{\mathrm{mod}},\quad j=0,\ldots,N_{\mathrm{KL}}.
\end{equation}
\noindent In fact, the total error $E_{\mathrm{mod}}$ consists of three parts: 
 \begin{equation}\label{Emod}
\begin{split}
 \Big|\widehat{\lambda}_j-\lambda_j^{\mathrm{mod}}(\sigma^\ast,\ell^\ast)\Big|&\leq\Big|\lambda_j^{\mathrm{samp}}-\lambda_j\Big|+\Big|\lambda_j-\lambda_j^{\mathrm{mod}}(\sigma^\ast,\ell^\ast)\Big|+\Big|\widehat{\lambda}_j-\lambda_j^{\mathrm{samp}}\Big|\\
    &\leq E_{\mathrm{quad}} + E_{\mathrm{asym}} + E_{\mathrm{samp}} \\
    &:= \max_{0\le j\le N_{\mathrm{KL}}} E_{\mathrm{quad},j} + E_{\mathrm{asym}} +  E_{\mathrm{samp}}\\&:= E_{\mathrm{mod}}(N,N_{\mathrm{KL}},N_{\theta},\sigma^\ast,\ell^\ast), 
\end{split}
\end{equation}
 where
 \begin{itemize}
     \item $E_{\mathrm{quad}}$: maximum quadrature error across retained modes $E_{\mathrm{quad},j}$ where $E_{\mathrm{quad},j}$ denotes quadrature discretization error between the $j$-th continuous theoretical $\lambda_j$ from covariance operator $\mathcal C$ (defined via the integrals in \eqref{guanxi1}–\eqref{coe}) and its uniform-grid discretization $\lambda_j^{\mathrm{samp}}$ in \eqref{lamsam};
     
     \item  $E_{\mathrm{asym}}$: the asymptotic approximation error  incurred when replacing the exact periodic eigenvalues $\lambda_j$ by the asymptotic Gaussian formula $\lambda_j^{\mathrm{mod}}$ in \eqref{exactrelatio};
     
     \item $E_{\mathrm{samp}}$:  finite-sample  error caused by estimating the covariance from only $N$ realizations, i.e.,   $E_{\mathrm{samp}}$  measures the discrepancy between the  empirical covariance  eigenvalue $\widehat{\lambda}_j$ in \eqref{lamb} and its true (discrete) counterpart $\lambda_j^{\mathrm{samp}}$.
 \end{itemize} 
\noindent For any fixed configuration of 
$N,N_{\mathrm{KL}},N_{\theta}$ and fixed true parameters $(\sigma^\ast,\ell^\ast)$,  all  error terms $E_{\mathrm{quad}}$,  $E_{\mathrm{asym}}$ and $E_{\mathrm{samp}}$ in \eqref{Emod} are finite and admit explicit formulations: specifically,  $|\lambda_j^{\mathrm{samp}}-\lambda_j|$ can be estimated by 
\begin{equation}\label{quad}
      |\lambda_j^{\mathrm{samp}}-\lambda_j|
      =\left|\sum_{m=0}^{N_\theta-1}wC(t_m)e^{ijt_m}-\int_{-\pi}^{\pi}C(t)e^{ijt}dt\right|, 
\end{equation}
with $\lambda_j=\int_{-\pi}^{\pi}C(t)e^{ijt}dt$ denoting the theoretical eigenvalue of the covariance operator $\mathcal C$ \big(for detailed  derivation of  $\lambda_j$, see \eqref{eq:lambda-fourier}\big) and $\lambda_j^{\mathrm{samp}}=\sum_{m=0}^{N_\theta-1}wC(t_m)e^{ijt_m}$  derived with  the grid points set $\{t_m\ |\  t_m=-\pi+mw,m=0,1,\cdots N_{\theta}-1\}$. Using the composite trapezoidal rule, for each fixed mode $j$ we set $f_j(t):=C(t)e^{ijt}$. Since $f_j\in C^2([-\pi,\pi])$, the quadrature error  in \eqref{quad} gives a finite upper bound:
\begin{equation}\label{qu}
\begin{split}
       \left|\sum_{m=0}^{N_\theta-1}wC(t_m)e^{ijt_m}-\int_{-\pi}^\pi f_j(t)dt\right|&\leq\sum_{m=0}^{N_\theta-1}\frac{w^3}{12}\max_{t\in[t_m,t_{m+1}]}|f_j^{\prime\prime}(t)|\\&\leq\frac{N_\theta w^3}{12}\max_{t\in[-\pi,\pi]}|f_j^{\prime\prime}(t)|
      \\&\leq\frac{2\pi}{12}.w^2\max_{t\in[-\pi,\pi]}\left|\partial_t^2(C(t)e^{ijt})\right| :=E_{\mathrm{quad},j}.
\end{split}
\end{equation}
Hence  taking the maximum over $j=0,\dots,N_{\rm KL}$ of $E_{\mathrm{quad},j}$ yields the  bounded error $E_{\mathrm{quad}}$ in \eqref{quad}:
\begin{equation}
  E_{\mathrm{quad}}:=\max_{0\leq j\leq N_{\mathrm{KL}}}E_{\mathrm{quad},j}.
\end{equation}
As for  $E_{\mathrm{asym}}$, simple calculations together with Proposition \autoref{relatio} shows that
\begin{equation}\label{asy1}
    \big|\lambda_j-\lambda_j^{\mathrm{mod}}\big|=\sigma^2\left| \int_{-\pi}^{\pi} e^{-t^2/\ell^2} e^{ijt}\,dt-\int_{-\infty}^{\infty} e^{-t^2/\ell^2} e^{ijt}\,dt\right|\leq C_{\mathrm{asym}}\,\sigma^2\ell^2 e^{-\pi^2/\ell^2},
\end{equation}
for some finite constant $C_{\mathrm{asym}}>0$ independent of $j$ (for detailed  derivation, see \autoref{appen:relation}).
Therefore, 
\begin{equation}\label{asy2}
    \left|\lambda_j-\lambda_j^{\mathrm{mod}}(\sigma^\ast,\ell^\ast)\right|\leq C_{\mathrm{asym}}\sigma^{*2}\ell^{*2}\exp\left(-\pi^2/\ell^{*2}\right):=E_{\mathrm{asym}}.
\end{equation}
The finite bound for $|\widehat{\lambda}_j-\lambda_j^{\mathrm{samp}}|$ follows from the covariance estimation result  in \cite{koltchinskii2017concentration}: for any $\epsilon\in(0,1)$ with probability at least $1-\epsilon$, there exists a  finite constant $C_{\mathrm{samp}}$ such that  for empirical matrix $\widehat{\Sigma}_N$ and the  matrix $K$, we have
\begin{equation}\label{prob}
  \|\widehat{\Sigma}_N-K\|_2 \le C_{\mathrm{samp}}\,\|K\|_2\,
\max\left\{
\sqrt{\frac{r(K)}{N}},\;
\frac{r(K)}{N},\;
\sqrt{\frac{\log(1/\epsilon)}{N}},\;
\frac{\log(1/\epsilon)}{N}
\right\}:=\mathcal{E}_{\mathrm{samp}}(N,\epsilon),
\end{equation}
where $r(K) := \frac{\mathrm{tr}(K)}{\|K\|_2}$. Then with Weyl’s inequality \cite{franklin2000matrix} the finite error estimate of $|\widehat{\lambda}_j-\lambda_j^{\mathrm{samp}}|$ is obtained:
\begin{equation}\label{Esamp}
    |\widehat{\lambda}_j-\lambda_j^{\mathrm{samp}}|\leq w\|\widehat{\Sigma}_N-K\|_2\leq w\mathcal{E}_{\mathrm{samp}}(N,\epsilon):=E_{\mathrm{samp}}(N,\epsilon).
\end{equation}
Thus \eqref{bound2} holds with finite error bound $E_{\mathrm{mod}}$.  

Combining \eqref{bound1} and \eqref{bound2}, we derive the total finite error bound $E_\lambda$ for  KL eigenvalues:
\begin{equation}\label{totalerror}
   \eta_j:=  \left|\widehat{\lambda}_j^{\mathrm{rec}}-\lambda_j^{\mathrm{mod}}(\sigma^\ast,\ell^\ast)\right|\leq E_{\mathrm{eig}}+E_{\mathrm{mod}}:=E_\lambda,\quad \forall \ j=0,\ldots,N_{\mathrm{KL}}.
\end{equation}

\subsection{Convergence of fitting process}
We now proceed to the second-stage least-squares fitting of the covariance hyperparameters.  First we convert the eigenvalue perturbation bound \eqref{totalerror} into a corresponding bound in the log scale.
Denote $y_j:=\log\widehat{\lambda}_j^{\mathrm{rec}},\ y_j^\ast:=\log\lambda_j^{\mathrm{mod}}(\sigma^\ast,\ell^\ast)$, then under several assumptions we will arrive at the logarithmic version of the estimate towards the eigenvalues:
\begin{Proposition}\label{loglam}
Suppose there exist constants $c_0>0$ such that for all $j=0,1,\ldots,N_{\mathrm{KL}}$, $\lambda_j^{\mathrm{mod}}(\sigma^\ast,\ell^\ast)\ge c_0$ and  the errors
$\eta_j$ for each $j$ defined in \eqref{totalerror} can be bounded by $E_\lambda$. 
    If $E_\lambda\leq\frac{c_0}{2}$,  then the following estimate holds:
    \begin{equation}
|y_{j}-y_{j}^{\ast}|=|\log\widehat{\lambda}_j^{\mathrm{rec}}-\log\lambda_j^{\mathrm{mod}}(\sigma^\ast,\ell^\ast)|\leq C^{\prime}E_{\lambda}.
    \end{equation} where $C^\prime>0$ is a constant depending only on $c_0$.
\end{Proposition}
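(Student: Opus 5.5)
The argument is a direct mean-value estimate for the logarithm on an interval kept away from zero. Fix $j\in\{0,\ldots,N_{\mathrm{KL}}\}$ and write $a:=\lambda_j^{\mathrm{mod}}(\sigma^\ast,\ell^\ast)$ and $b:=\widehat{\lambda}_j^{\mathrm{rec}}$.

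First, we make sure $b$ is positive, so that $y_j=\log b$ is defined. By \eqref{totalerror} we have $|b-a|=\eta_j\le E_\lambda$. The hypotheses give $a\ge c_0$ and $E_\lambda\le c_0/2$, hence
\begin{equation*}
b\;\ge\; a-E_\lambda\;\ge\; c_0-\tfrac{c_0}{2}\;=\;\tfrac{c_0}{2}\;>\;0 .
\end{equation*}
So both $a$ and $b$ lie in $[c_0/2,\infty)$. This is the only point where the smallness assumption $E_\lambda\le c_0/2$ enters. Without it, $\widehat{\lambda}_j^{\mathrm{rec}}$ could be nonpositive or arbitrarily close to zero, and no bound on $|\log b-\log a|$ would hold.

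Next, apply the mean value theorem to $\log$ on the segment between $a$ and $b$. This gives a point $\xi$ between $a$ and $b$, so $\xi\ge\min\{a,b\}\ge c_0/2$, with
\begin{equation*}
|y_j-y_j^\ast| \;=\; |\log b-\log a| \;=\; \frac{|b-a|}{\xi} \;\le\; \frac{2}{c_0}\,\eta_j \;\le\; \frac{2}{c_0}\,E_\lambda .
\end{equation*}
Setting $C':=2/c_0$ gives the claimed estimate, with a constant that depends only on $c_0$ and is uniform in $j$.

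There is no real obstacle here. The only point needing care is the positivity and lower bound of $\widehat{\lambda}_j^{\mathrm{rec}}$ established in the first step, which guarantees that $\log$ is Lipschitz with constant $2/c_0$ on the relevant interval. As an alternative to the mean value theorem, one can write $\log b-\log a=\log\bigl(1+(b-a)/a\bigr)$ and use $|\log(1+x)|\le 2|x|$ for $|x|\le 1/2$. This yields the sharper constant $C'=2/c_0$ with the ratio bound $|x|\le E_\lambda/c_0\le 1/2$.
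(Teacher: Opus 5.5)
Your proof is correct and is essentially the paper's argument: the paper writes $y_j-y_j^\ast=\log(1+\eta_j/\lambda_j^{\mathrm{mod}})$ and uses $|\log(1+t)|\le 2|t|$ for $|t|\le\frac12$ (exactly your ``alternative''), while your mean value theorem version gives the same Lipschitz bound for $\log$ on $[c_0/2,\infty)$, and your explicit check that $\widehat{\lambda}_j^{\mathrm{rec}}\ge c_0/2>0$ spells out a point the paper leaves implicit. One small slip: the alternative route does not give a \emph{sharper} constant, since both yield the same $C'=2/c_0$.
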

\begin{proof}
    Since $ |\eta_j|\leq E_\lambda$ and $\lambda_j^{\mathrm{mod}}(\sigma^\ast,\ell^\ast)\geq c_0$ (for simplicity, we write $\lambda_j^{\mathrm{mod}}$ for $\lambda_j^{\mathrm{mod}}(\sigma^\ast,\ell^\ast)$ in the following proof), we have $\left|\frac{\eta_{j}}{\lambda_{j}^{\mathrm{mod}}}\right|\leq\frac{E_{\lambda}}{c_{0}}$.  Given the condition $E_\lambda\leq \frac{c_0}{2}$, then $\left|\frac{\eta_{j}}{\lambda_{j}^{\mathrm{mod}}}\right|\leq\frac{E_{\lambda}}{c_{0}}\leq\frac{1}{2}$. It follows that
    \[
   \Big| y_{j}-y_{j}^{\ast}\Big|=\Big|\log\widehat{\lambda}_{j}-\log\lambda_{j}^{\mathrm{mod}}\Big|=\Big|\log(\lambda_{j}^{\mathrm{mod}}+\eta_{j})-\log\lambda_{j}^{\mathrm{mod}}\Big|=\bigg|\log{\left(1+\frac{\eta_{j}}{\lambda_{j}^{\mathrm{mod}}}\right)}\bigg|.
    \]
    With the inequality $|\log(1+t)|\leq2|t|,\ |t|\leq\frac{1}{2}$, finally we get
    \[
    |y_j-y_j^*|=\left|\log\left(1+\frac{\eta_{j}}{\lambda_{j}^{\mathrm{mod}}}\right)\right|\leq2\left|\frac{\eta_{j}}{\lambda_{j}^{\mathrm{mod}}}\right|\leq\frac{2}{c_0}E_\lambda:=C^\prime E_\lambda. \qedhere 
    \] \end{proof}
\noindent The uniform lower bound $c_0$ introduced in the Proposition \autoref{loglam} serves to ensure a consistent bounded error $C^\prime E_\lambda$ for the data used in the subsequent least-squares fitting. Additionally, this restriction ensures that high-frequency eigenvalues, whose magnitude may be comparable to the error $E_{\lambda}$, are excluded from the  fitting process  as their information content is likely to be severely corrupted by noise. 

For $y_{j}^{\ast}$, taking the expression for $\lambda_{j}^{\mathrm{mod}}$ in \eqref{lammod} into the logarithm formulation yields the following linear relation in terms of the true covariance parameters $(\sigma^\ast,\ell^\ast)$:
\begin{equation}\label{bless}
    y_{j}^{\ast}:=\log\lambda_{j}^{\mathrm{mod}}(\sigma^\ast,\ell^\ast)=\underbrace{\log\left(\sqrt{\pi}\frac{{\sigma^\ast}^2}{\ell^\ast}\right)}_{=:A^\ast}-\underbrace{\frac{{\ell^\ast}^2}{4}}_{=:B^\ast}j^{2}\xlongequal{x_j:=j^2}A^\ast-B^\ast x_j.
\end{equation}
By \autoref{convergence} and Proposition \autoref{loglam},   data $y_j$,  $y_j^\ast$  formulate a  linear least-squares regression problem with additive noise up to a uniformly bounded residual term:
\begin{equation}\label{bless2}
    y_j=A^\ast-B^\ast x_j+\eta_j,\quad|\eta_j|\leq C^{\prime}E_\lambda,\qquad \forall\ j=0,\dots,N_{\mathrm{KL}}-1,
\end{equation}
therefore, a standard error analysis for linear regression yields the convergence statement below for $A_{\mathrm{fit}},\ B_{\mathrm{fit}}$ obtained from \eqref{AB}:
\begin{theorem}\label{bless5}
  Let $A_{\mathrm{fit}},B_{\mathrm{fit}}$ are the solutions to \eqref{AB}, $y^\ast_j,A^\ast:=\log\left(\sqrt{\pi}\frac{{\sigma^\ast}^2}{\ell^\ast}\right)$ and  $B^\ast=\frac{{\ell^\ast}^2}{4}$ satisfy \eqref{bless}. If the assumptions in \autoref{convergence} and Proposition \autoref{loglam} hold,  then there exists a constant $C_1:=C_1(N_{\mathrm{KL}},\{x_j\}_{j=0}^{N_{\mathrm{KL}}})>0$ depending only on $N_{\mathrm{KL}}$ and $\{x_j\}_{j=0}^{N_{\mathrm{KL}}}$ s.t.
  \begin{equation}
     |A_{\mathrm{fit}}-A^\ast|+ |B_{\mathrm{fit}}-B^\ast|\leq C_1E_\lambda.
  \end{equation}
 Moreover,  let $\ell_{\mathrm{est}}$ and $\sigma_{\mathrm{est}}$ are obtained from $A_{\mathrm{fit}}$ and $B_{\mathrm{fit}}$ by \eqref{ell} and \eqref{sigma}, respectively. If the true covariance parameters $(\sigma^\ast,\ell^\ast)$ are contained within a compact set $I\subset(0,\infty)^2$ and $A_{\mathrm{fit}},B_{\mathrm{fit}}$ computed via \eqref{fitting}-\eqref{AB} also lie in some bounded sets $I_{AB}:=\{(A,B)\big|\|(A,B)\|<\infty, \ B>0\}$,  then $\exists \ C_2>0$(depending on these compact sets) such that
 \begin{equation}\label{bless10}
     |\sigma_{\mathrm{est}}-\sigma^{\ast}|+|\ell_{\mathrm{est}}-\ell^{\ast}|\leq C_{2}{\left(|A_{\mathrm{fit}}-A^{\ast}|+|B_{\mathrm{fit}}-B^{\ast}|\right)}\leq C_1C_2E_\lambda.
 \end{equation}
\end{theorem}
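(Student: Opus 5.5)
The plan is to treat \eqref{AB} as an ordinary linear least-squares problem with a fixed design matrix, and then push the resulting coefficient bound through the explicit inversion formulas \eqref{ell}--\eqref{sigma} using a Lipschitz argument on compact sets.

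\textbf{Step 1: write the normal equations.} Set $n:=N_{\mathrm{KL}}+1$ and let $X\in\mathbb{R}^{n\times 2}$ be the design matrix with rows $(1,-x_j)$, $x_j=j^2$, $j=0,\dots,N_{\mathrm{KL}}$. Minimizing \eqref{fitting} is then the problem $\min_{\beta}\|y-X\beta\|_2$ with $\beta=(A,B)^\top$. As soon as $N_{\mathrm{KL}}\ge 1$, the $x_j$ are not all equal, so $X$ has full column rank. Hence $X^\top X$ is invertible and
\[
\beta_{\mathrm{fit}}=(X^\top X)^{-1}X^\top y .
\]

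\textbf{Step 2: use exactness of the model at the truth.} By \eqref{bless}, $y^\ast=X\beta^\ast$ holds exactly. Therefore
\[
\beta_{\mathrm{fit}}-\beta^\ast=(X^\top X)^{-1}X^\top(y-y^\ast).
\]

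\textbf{Step 3: bound the data perturbation.} The hypotheses of \autoref{convergence} and Proposition \autoref{loglam} give $|y_j-y_j^\ast|\le C'E_\lambda$ for every $j$. Consequently $\|y-y^\ast\|_2\le\sqrt{n}\,C'E_\lambda$, and
\[
|A_{\mathrm{fit}}-A^\ast|+|B_{\mathrm{fit}}-B^\ast|\le\sqrt2\,\|(X^\top X)^{-1}X^\top\|_2\,\sqrt n\,C'E_\lambda .
\]
This gives the first claim with $C_1:=\sqrt{2n}\,C'\,\|(X^\top X)^{-1}X^\top\|_2$. The factor $C'=2/c_0$ from Proposition \autoref{loglam} also enters $C_1$; I will either absorb it into $C_1$ or make the dependence explicit. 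If an explicit constant is wanted, $\|(X^\top X)^{-1}X^\top\|_2=1/s_{\min}(X)$. The smallest singular value can be bounded below using the variance $\sum_j(x_j-\bar x)^2>0$, via the closed-form $2\times2$ inverse.

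\textbf{Step 4: invert the map to $(\sigma,\ell)$.} Define
\[
\Phi(A,B):=\bigl(\sigma,\ell\bigr)=\Bigl(\sqrt{e^{A}/(\sqrt\pi\,\ell)},\;2\sqrt B\Bigr),
\]
following \eqref{ell}--\eqref{sigma}. I take care to use the same intercept convention as in \eqref{sigma}, i.e.\ $A=\log(\sqrt\pi\sigma^2\ell)$; under the $\sigma^2/\ell$ form written in \eqref{bless} only the formula for $\sigma$ changes, not the argument. On the region $B>0$, $\Phi$ is $C^\infty$, and $\Phi(A^\ast,B^\ast)=(\sigma^\ast,\ell^\ast)$.

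\textbf{Step 5: the main obstacle.} The difficulty is to guarantee that $\Phi$ is Lipschitz along the segment joining $(A^\ast,B^\ast)$ and $(A_{\mathrm{fit}},B_{\mathrm{fit}})$. This requires $B$ to stay bounded away from $0$, because $\partial_B\sqrt B=1/(2\sqrt B)$ blows up there, and it requires $A$ to stay bounded.
\begin{itemize}
\item For the true parameters, compactness of $I\subset(0,\infty)^2$ gives $B^\ast=\ell^{\ast2}/4\ge b_0>0$ and $A^\ast$ bounded.
\item For the fitted parameters, I take $I_{AB}$ to mean a compact set with $B\ge b_0'>0$. This is the natural reading of the assumption $B>0$ together with boundedness.
\item If $I_{AB}$ is not convex, I instead shrink $E_\lambda$ so that $C_1E_\lambda\le b_0/2$. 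Then $B_{\mathrm{fit}}\ge b_0/2$ automatically, and the whole segment lies in the compact convex set $K:=\{|A|\le a_0+1,\ B\ge b_0/2,\ B\le b_1\}$.
\end{itemize}
On $K$ we have $\max\{4B,0\}=4B$, so $\ell_{\mathrm{est}}=2\sqrt{B}$ exactly. The gradient of $\Phi$ is bounded there: $\partial\ell/\partial B=1/\sqrt B$, and the partial derivatives of $\sigma$ are $\sigma/2$ with respect to $A$ and $-\sigma/(4B)$ with respect to $B$.

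\textbf{Step 6: conclude.} The mean value theorem on $K$ gives
\[
|\sigma_{\mathrm{est}}-\sigma^\ast|+|\ell_{\mathrm{est}}-\ell^\ast|\le C_2\bigl(|A_{\mathrm{fit}}-A^\ast|+|B_{\mathrm{fit}}-B^\ast|\bigr),
\]
with $C_2:=\sqrt2\sup_K\|D\Phi\|$. Combining this with Step 3 yields \eqref{bless10}.
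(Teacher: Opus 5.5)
Your proposal is correct and follows the paper's proof. Both arguments use the normal equations to get $\widehat\varrho-\varrho^\ast=(X^\top X)^{-1}X^\top\eta$, bound $\|\eta\|_2$ through Proposition \ref{loglam}, and then apply a Lipschitz bound for the $C^1$ map $(A,B)\mapsto(\sigma,\ell)$ on a compact subset of $\{B>0\}$.

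Your version is tidier than the paper's in three places:
\begin{itemize}
\item You fix one intercept convention throughout; the paper's \eqref{appp} and \eqref{pppa} are not mutually inverse.
\item You apply the mean value theorem on a convex set, whereas the paper asserts a Lipschitz constant $\sup\|D\mathcal G\|$ on a possibly nonconvex union.
\item You record that $C_1$ also depends on $c_0$ through $C'$.
\end{itemize}
You do not need to shrink $E_\lambda$: a single rectangle $\{|A|\le a,\ b\le B\le b'\}$ with $b>0$ contains both $\mathcal G^{-1}(I)$ and $I_{AB}$. That rectangle supplies the convex set your Step 5 needs, whether or not $I_{AB}$ is convex.
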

 

\begin{proof}
    See \autoref{proofofbless5}.
\end{proof}

\subsection{Convergence of the RLA-based shape reconstruction}
Earlier we have investigated the convergence properties of the KL eigenvalues and statistics for  second Stage 2: \autoref{convergence} derives an error upper bound \eqref{coeest} for the reconstructed eigenvalues against the true ones, Proposition \autoref{loglam} and \autoref{bless5} yield an overall error bound \eqref{bless10} between the estimated and true covariance parameters. The validity of all the theoretical convergence estimates established above is predicated on a precise characterization of the convergence behavior in the first-stage geometric reconstruction. 
 
 However, in practical computations  it is often difficult to directly quantify $\widetilde e^{(s)}$ in \autoref{convergence} since the $\varepsilon$ in \eqref{assumption1}  is not computable  due to the unknown true parameters of the scatterer's  geometry. In practice, a heuristic alternative is to use 
\begin{equation}\label{etas}
    \nu_s
:=\frac{1}{m}\sum_{i=1}^{m}\bigl(\widehat r^{(s)}(\theta_i,k_{n_K})-\widehat r^{(s)}(\theta_i,k_{n_{K-1}})\bigr)^2,
\quad \text{or equivalently,} \quad
\nu_s := \|\widehat x_{n_K}^{(s)}-\widehat x_{n_{K-1}}^{(s)}\|_2^2
\end{equation}
as a proxy for $\|\widetilde e^{(s)}\|^2 $ since an accurate reconstruction in the first stage yields a shape that stabilizes well at the highest frequency and exhibits low sensitivity to frequency increments, leading to a gradually decreasing difference between consecutive frequency reconstructions quantified by $\nu_s$. But we should note that the $\nu_s$ in \eqref{etas} is merely an a posteriori diagnostic and does not constitute a rigorous error estimator:  the intrinsic ill-posedness of IOSP implies that weak sensitivity to frequency increments does not necessarily imply proximity to the true geometry, hence a rigorous convergence analysis based on $\nu_s$ is currently lacking.

In fact, for the  shape inversion method we adopt in this paper at Stage 1, Sini et al. established the following convergence result in \cite{sini2012inverse}:
\begin{theorem}[\cite{sini2012inverse}]\label{Siniconver}
    Let $\delta$ be the noise level of the measured far field patterns and $\widetilde{\delta}>\delta$. Denote $X_{M_j}$ the $M_j$-dimensional subspace  spanned by $\{1,\cos t,\sin t,\cdots,\cos M_jt,\sin M_jt\}$. A finite-dimensional observable shape $D(\widetilde{r}(k))$ is defined as a domain of the radial function $\widetilde{r}(k)\in X_M^+$ for some $M\in\mathbb{N}$  and the corresponding far field pattern $\mathcal{F}(\widetilde{r}(k),k)$ satisfies the condition $\left\| \mathcal{F}(\widetilde{r}(k), k) - u^{\infty, \delta}(\cdot, k) \right\|_{L^2(\mathbb{S}^1)}^2 \leq \widetilde{\delta}$. The far field pattern is measured at the discrete set of frequencies $k_j := k_1 + j\Delta k,\ j = 0,1,\dots,n_K,\ \text{with}\ \Delta k = \frac{k_h - k_l}{n_K}=\frac{k_{n_K}-k_1}{n_K}$. Assume that the smallest singular value $\sigma_{min}$ of the linear operators $\widetilde{A}_j \big|_{x_{M_j}},\ j \in \{0, \dots, n_K\}$ satisfies  the condition (45) in \cite{sini2012inverse} and the regularization parameter $\alpha$ satisfies (47) in \cite{sini2012inverse} for some fixed values $\epsilon,\xi\in(0,1)$, then there exists a constant $N_0=N_0(\alpha)$ such that if $\|\widetilde{r}(k_l) - r_0\|_{\partial D} \leq c_0 \alpha$, then the following estimate holds
    \begin{equation}\label{Sinire}
        \|\widetilde{r}(k_h) - r_N\|_{\partial D} \leq \frac{C}{n_K\sqrt{\alpha}} + \frac{\widetilde{\delta}}{2\sqrt{\alpha}(1 - \epsilon)}, \quad \forall N \geq N_0,
    \end{equation}
    where $c_0$ is given by (36) in \cite{sini2012inverse} and $C$ is a constant independent of $\widetilde{\delta},\alpha$ and $N$.
\end{theorem}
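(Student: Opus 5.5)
The plan is to follow the recursive-linearization error analysis of \cite{sini2012inverse}: propagate the error $e_j:=r_j-\widetilde r(k_j)$ from one frequency to the next and close the argument by induction over $j=0,\dots,n_K$. The starting point is the initialization hypothesis $\|e_0\|_{\partial D}=\|\widetilde r(k_l)-r_0\|_{\partial D}\le c_0\alpha$. The two ingredients are a one-step \emph{contraction} coming from the regularized Gauss--Newton update \eqref{update}, and a one-step \emph{drift} $\|\widetilde r(k_{j+1})-\widetilde r(k_j)\|_{\partial D}=O(\Delta k)=O(1/n_K)$ of the observable shapes. The drift bound follows since $\widetilde r(k)$ lies in the finite-dimensional space $X_M^+$, where all norms are equivalent. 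It also relies on a Lipschitz dependence on $k$ of the far-field map and its Fr\'echet derivative on bounded sets of $C^3$ star-shaped domains.

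\textbf{Step 1 (error identity).} Write $A_j=\mathcal F'(r_j,k_{j+1})$ and insert the Taylor expansion \eqref{Taylor} of $\mathcal F(\widetilde r(k_{j+1}),k_{j+1})$ about $r_j$ into the residual $g_j$. Then $g_j=-A_j\bigl(\widetilde r(k_{j+1})-r_j\bigr)+\rho_j+\tau_j$, where $\|\rho_j\|\le L\|\widetilde r(k_{j+1})-r_j\|^2$ is the quadratic Taylor remainder and $\|\tau_j\|\le\widetilde\delta$ is the observability/noise defect. Substituting this into \eqref{update} gives
\[
e_{j+1}=\alpha(\alpha I+A_j^*A_j)^{-1}\bigl(\widetilde r(k_j)-\widetilde r(k_{j+1})+e_j\bigr)+(\alpha I+A_j^*A_j)^{-1}A_j^*(\rho_j+\tau_j).
\]

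\textbf{Step 2 (operator bounds).} Use the spectral estimate $\|(\alpha I+A^*A)^{-1}A^*\|\le 1/(2\sqrt\alpha)$. Restrict to $X_{M_j}$ and use the singular value condition (45) on $\widetilde A_j|_{X_{M_j}}$ together with the smallness (47) of $\alpha$. This is intended to give $\|\alpha(\alpha I+A_j^*A_j)^{-1}\|_{X_{M_j}}\le\alpha/(\alpha+\sigma_{\min}^2)\le\epsilon$; I take this to be the role of the fixed factor $\epsilon$, though I have not checked the exact form of (45)--(47). One also has to control $A_j$ versus $\widetilde A_j$, i.e.\ linearizing at $r_j$ rather than at $\widetilde r$. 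This is handled by Lipschitz continuity of $\mathcal F'$ in $r$ together with the induction hypothesis that $e_j$ stays small.

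\textbf{Step 3 (recursion and induction).} Combining Steps 1--2 yields
\[
\|e_{j+1}\|\le\epsilon\|e_j\|+\frac{C}{n_K}+\frac{L}{2\sqrt\alpha}\bigl(\|e_j\|+C/n_K\bigr)^2+\frac{\widetilde\delta}{2\sqrt\alpha}.
\]
Under the inductive assumption $\|e_j\|\le c_0\alpha$, the quadratic term is absorbed into the linear one; this is what fixes $c_0$ as in (36). The inductive assumption itself is preserved once $n_K\ge N_0(\alpha)$. Summing the geometric series gives $\sup_j\|e_j\|\lesssim\frac{C}{n_K}+\frac{\widetilde\delta}{2\sqrt\alpha(1-\epsilon)}$ plus a decaying initial term. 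Replacing the drift bound by its scaled form then produces the $\frac{C}{n_K\sqrt\alpha}$ term in \eqref{Sinire}.

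\textbf{Main obstacle.} I expect the hardest step to be Step 2. It requires a uniform contraction for the linearization taken at the \emph{current iterate} rather than at the observable shape. It also requires handling the fact that $\widetilde r(k_{j+1})-r_j$ may leave $X_{M_j}$, since the subspaces grow with $j$; the component in $X_{M_j}^\perp$ is only damped by the Tikhonov term. My first attempt would be to split the error into its $X_{M_j}$ and $X_{M_j}^\perp$ parts, contract the former via $\sigma_{\min}$, and bound the latter by the $1/\sqrt\alpha$ factor.
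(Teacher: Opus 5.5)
Your architecture is the natural one: the error recursion from the Levenberg--Marquardt step, contraction on $X_{M_j}$ via $\sigma_{\min}$, the bound $\|(\alpha I+A_j^*A_j)^{-1}A_j^*\|\le\frac{1}{2\sqrt\alpha}$, and induction on $\|e_j\|_{\partial D}\le c_0\alpha$. But the ingredient you treat as routine cannot be derived from the listed hypotheses: the drift bound $\|\widetilde r(k_{j+1})-\widetilde r(k_j)\|_{\partial D}=O(\Delta k)$. The paper does not reprove this theorem; it quotes it from \cite{sini2012inverse}. Right after the statement, it records that Sini and Thanh \emph{assume} in addition $\|\widetilde r(k_{j+1})-\widetilde r(k_j)\|_{\partial D}\le d_0|k_{j+1}-k_j|$, and that even the existence of observable shapes is not established.

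Your justification fails for three reasons. First, $\widetilde r(k)$ is only required to lie in a misfit sublevel set, so $k\mapsto\widetilde r(k)$ is not a well-defined map, and admissible choices at neighbouring frequencies may be far apart; this is the ill-posedness. Second, the data $u^{\infty,\delta}(\cdot,k_j)$ carry a separate noise realization at each frequency, so they are not Lipschitz in $k$, however smooth $k\mapsto\mathcal F(r,k)$ is. Third, equivalence of norms on $X_M$ says nothing about dependence on $k$. Turning a small far-field change into a small change of $r$ would need a quantitative stability estimate for $\mathcal F(\cdot,k)$ on $X_M$, uniform as $M=M_j$ grows, and this is not assumed.

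All $n_K$-dependence in your recursion comes from this drift, and so does preservation of the inductive hypothesis for $n_K\ge N_0(\alpha)$. Hence the argument as written does not prove the statement. You must add the Lipschitz condition with constant $d_0$ as a hypothesis; $C$ then depends on $d_0$, $k_h-k_l$ and a bound on $\|A_j\|$. Note that the statement as reproduced here omits this hypothesis.

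With that hypothesis added, three further points need repair.

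\textbf{Contraction on the subspace.} For $v\in X_{M_j}$, the bound $\|\alpha(\alpha I+A_j^*A_j)^{-1}v\|\le\frac{\alpha}{\alpha+\sigma_{\min}^2}\|v\|$ fails in general for the full-space operator, because $(\alpha I+A_j^*A_j)^{-1}$ does not map $X_{M_j}$ into itself. A rank-one $A_j$ on $\mathbb{R}^2$ already gives a counterexample. The bound does hold if the step is computed inside the subspace, i.e.\ with $A_j$ replaced by $A_jP$, where $P$ is the orthogonal projection onto that subspace. This is what the Fourier parameterization \eqref{eq:param-radius} does.

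The obstacle you flagged is then handled in one of two ways. You can compute the step to $k_{j+1}$ in a subspace containing $\widetilde r(k_{j+1})-r_j$ (e.g.\ $X_{M_{j+1}}\supset X_{M_j}$), so the whole error is contracted. Alternatively, you move $A_j(I-P)\widetilde r(k_{j+1})=A_j(I-P)(\widetilde r(k_{j+1})-\widetilde r(k_j))$ into the data error, where it is multiplied by $\frac{1}{2\sqrt\alpha}$. In both cases it is the drift hypothesis that controls this term.

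\textbf{Closing the induction.} The induction $\|e_j\|\le c_0\alpha$ cannot be closed by taking $n_K$ large alone. The noise contribution $\frac{\widetilde\delta}{2\sqrt\alpha(1-\epsilon)}$ does not decay in $n_K$. You therefore need a compatibility condition of the type $\widetilde\delta\lesssim c_0\alpha^{3/2}$, which has to be part of what (47) imposes on $\alpha$.

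\textbf{Squared misfit.} As literally stated, the misfit condition bounds the \emph{squared} norm by $\widetilde\delta$, which gives only $\|\tau_j\|\le\sqrt{\widetilde\delta}$. Your bound $\|\tau_j\|\le\widetilde\delta$, and the linear dependence on $\widetilde\delta$ in \eqref{Sinire}, require reading that condition without the square.
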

\noindent This theorem provides a quantitative convergence bound between the finitely parameterized reconstruction $r_N$ and the observable shape variable $\widetilde r$ within the RLA framework. Beyond the hypotheses in \autoref{Siniconver}, they also  assume another Lipschitz-type condition in wavenumber in \cite{sini2012inverse}: there exists $d_0>0$ such that $\|\widetilde r(k_{j+1})-\widetilde r(k_j)\|_{\partial D} \le d_0|k_{j+1}-k_j|$ for $j=0,\dots,N-1$. As noted in \cite{sini2012inverse}, the existence of such an observable limit $\widetilde r$ is itself not rigorously established.
This naturally raises the question of whether our second-stage statistical analysis, from convergence of discrete covariance spectra to consistency of the fitted $(\sigma,\ell)$, can be formulated directly with respect to $\widetilde r$, thereby avoiding the untestable proximity-to-truth assumption in \autoref{convergence}. 

Conceptually, one may treat $\widetilde r$ as the data-dependent limit from the first-stage procedure and define the “reference” fluctuation field around $\widetilde r$. The reconstructed empirical covariance can then be compared to the covariance operator associated with $\widetilde r$, so that the eigenvalue-based inference (including the log-linear fit for $\sigma$ and $\ell$) is interpreted as estimating statistical parameters at the observable resolution. But turning this interpretation into a fully rigorous coupled convergence theory is considerably more subtle: first, the observable shape $\widetilde r$ is not fully characterized from a theoretical viewpoint, its existence is not guaranteed in general and even when it exists it may fail to be unique as pointed out in \cite{sini2012inverse}; second, due to the intrinsic ill-posedness of  IOSP,  the reconstruction process may amplify small perturbations in  data, the discretization or the algorithmic setup into pronounced  variations in the recovered high-frequency geometry. 
Therefore, obtaining quantitative bounds that (i) transfer the geometric error  into a sharp spectral error, especially in higher modes, and (ii) further translate that spectral error into explicit consistency estimates for $(\sigma,\ell)$ would  be left for the future study.

\section{Numerical experiments} \label{Numerical_experiments}

In this section, we present several numerical experiments to verify the effectiveness of our two-stage algorithm. As illustrated in Remark \ref{remark1}, the current random geometric framework for scatterers of interest is confined to the smooth star-shaped category where the parameter $t$ exhibits a linear relationship with the polar angle $\theta$. Common scatterers satisfying this requirement include the circle, pear and flower shapes. Therefore, the numerical examples in the remainder of this paper will focus on these three structures as depicted in \autoref{pic:scatterer}:

\begin{itemize}
 \item circle: the boundary is parameterized as   in \eqref{circle}.
\item pear: the boundary is parameterized as  in \eqref{pear}.
\item flower: the boundary is parameterized as:
\begin{equation}
        \partial D_{\mathrm{flower}} = \left\{ x_0 + r_{\mathrm{flower}}(\theta) (\cos\theta, \sin\theta) : \theta \in [0, 2\pi) \right\},\quad r_{\mathrm{flower}}(\theta) = R \left(1 + \varepsilon \cos(j\theta) \right).
\end{equation}
\end{itemize}
For flowers, $R$ controls the overall size of the obstacle,
$\varepsilon$ is the relative amplitude of the radial oscillation
and $j$ determines the number of petals.

\begin{figure}[htbp]
    \centering
    \begin{subfigure}{0.3\textwidth}
        \includegraphics[width=\linewidth]{ 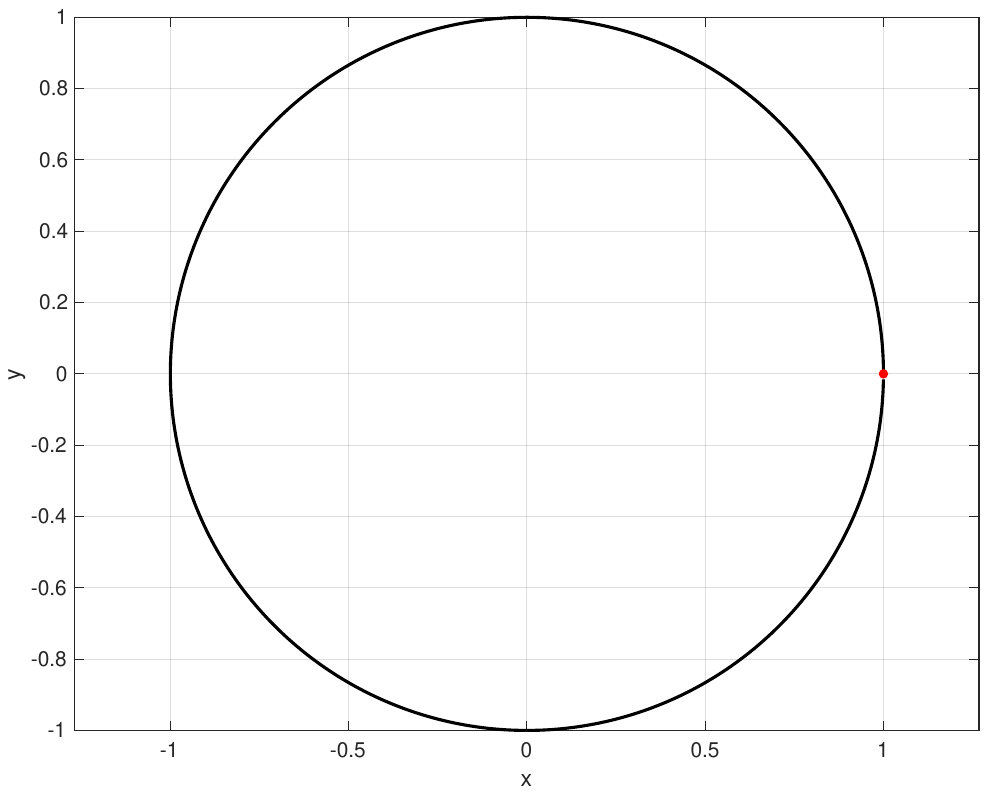}
        \caption{\small circle}
        \label{pic:circle}
    \end{subfigure}%
    \hspace{0.5cm} 
     \begin{subfigure}{0.3\textwidth}
        \includegraphics[width=\linewidth]{ 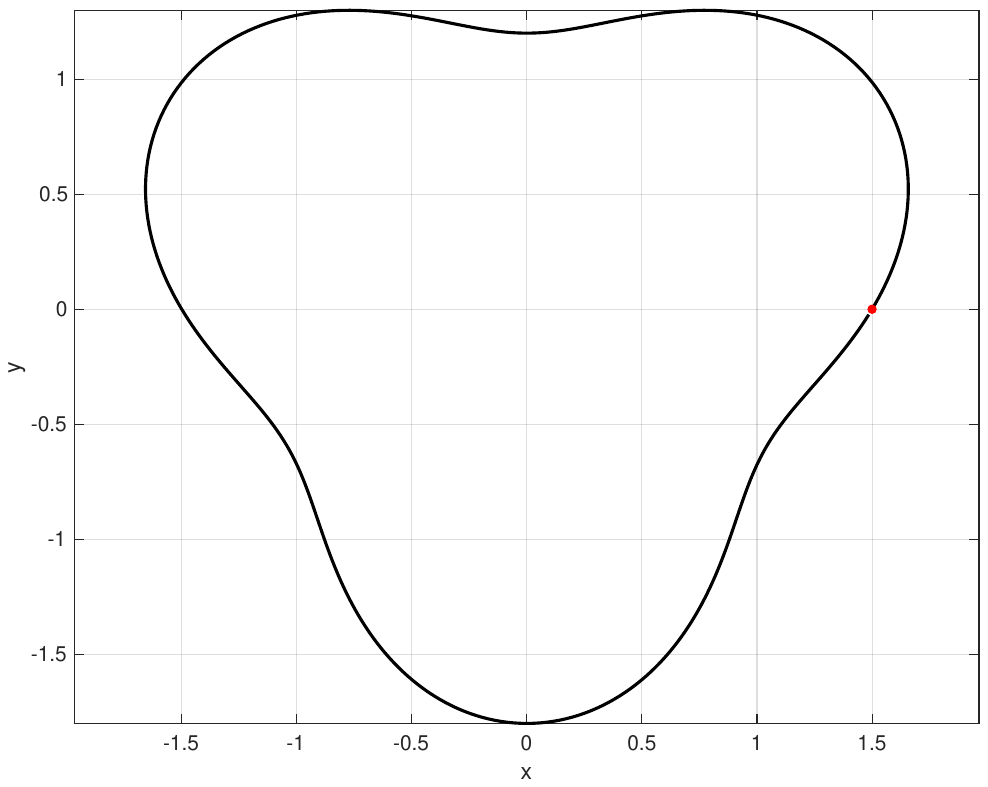}
        \caption{\small pear } 
        \label{pic:pear}
    \end{subfigure}%
    \hspace{0.5cm}
     \begin{subfigure}{0.3\textwidth}
  \includegraphics[width=\linewidth]{ 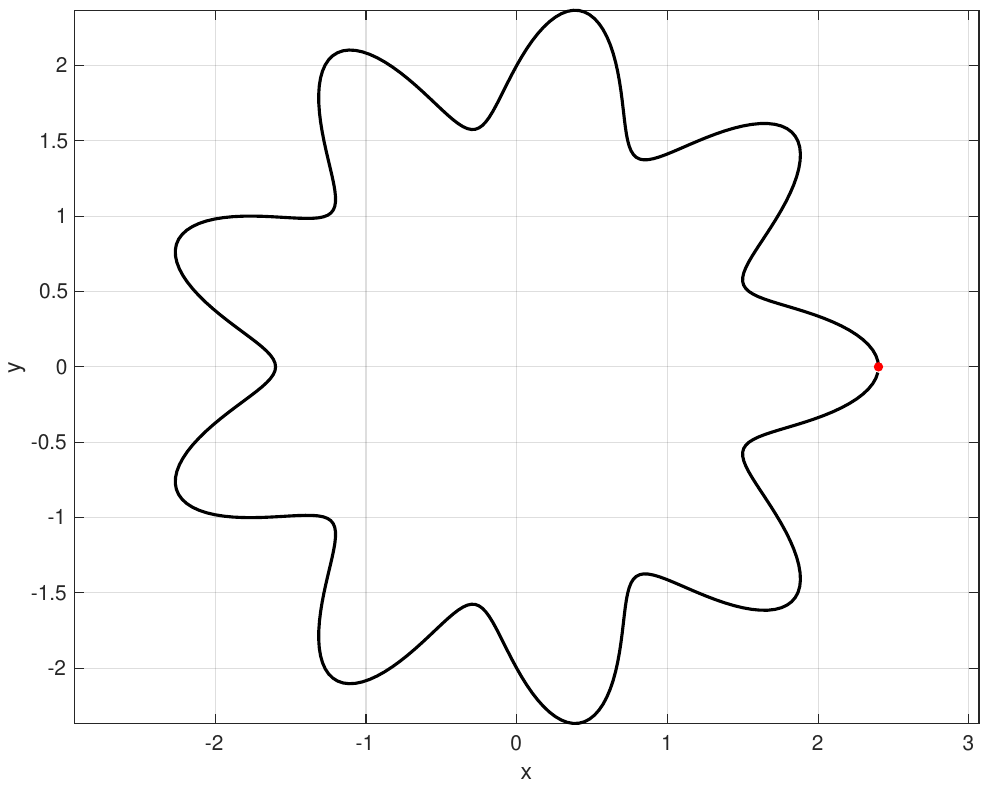}
  \caption{\small flower }
        \label{pic:flower}
    \end{subfigure}
    \caption{\small the geometry of the scatterer}
    \label{pic:scatterer}
    \end{figure}

\subsection{Experiment Setup} \label{experiment_setup}
Throughout this section we assume that all obstacles are star-shaped with respect to the origin $(0,0)$: $\partial D = \{ r(\theta)(\cos\theta, \sin\theta) : \theta \in [0, 2\pi] \}.$
The radial function is represented by a truncated Fourier expansion as given in \eqref{eq:param-radius} with a parameter dimension of $n_r=2N_r+1$. For boundary discretization, we employ a uniform angular grid: $\theta_i = 2\pi i / N_\theta, \  i = 0, \dots, N_\theta - 1$ with $N_\theta=400$. In the truncated KL expansion \eqref{trunKLexpan}, the selected modes are chosen by the threshold  $\lambda_j\geq 10^{-6}$ which yields a finite number of KL terms. The random variables are taken to be i.i.d. $\xi_0,\ \xi_{j,c},\  \xi_{j,s}(\omega)\sim\mathcal N(0,1)$ in \eqref{trunKLexpan}. For the forward (direct) problem, the sound-soft scattered field $u^s$ is formulated using a combined
acoustic double- and single-layer potential representation \cite{colton1998inverse} with:
\begin{equation}\label{us}
    u^s(x) = \int_{\partial D} \left\{ \frac{\partial \Phi(x, y)}{\partial \nu(y)} - i\eta \Phi(x, y) \right\} \phi(y) \, ds(y), \quad x \in \mathbb{R}^2 \setminus \partial D,
\end{equation}
where $\Phi(x,y)$ denotes the Green function for the two-dimensional Helmholtz equation and $\phi$ is the unknown  density function defined on $\partial D$.
 We set the coupling parameter to $\eta=k$ for each wavenumber $k$ and the boundary integral in \eqref{us} will be discretized by  Nystr\"om method.  We fix the incident direction to be $d=(-1,0)^\top$. The far-field pattern $u^\infty$  will be  computed via \cite{colton1998inverse}:\[u_\infty(\hat{x}) = \frac{e^{-i\frac{\pi}{4}}}{\sqrt{8\pi k}} 
\int_{\partial D} \left\{ k\, \nu(y) \cdot \hat{x} + \eta \right\} 
e^{-ik \hat{x} \cdot y} \, \phi(y) \, ds(y),\quad |\hat{x}|=1\] evaluated at angles $\{\hat x_m\}_{m=1}^M\subset\mathbb S^1$ and the number of observation directions is fixed at $M=200$, uniformly distributed over $[0,2\pi)$. For multifrequency data, we employ $k_j = j,\   j = 1, \dots, n_K$ where $ n_K = 8$. Noise-free far-field data $u^\infty$ 
are generated for each sample and each frequency. Subsequently, relative Gaussian noise is added: $u^{\infty,\delta} = u^\infty + \delta \frac{\|u^\infty\|}{\|\xi\|} \, \xi,
\  \xi \sim \mathcal{N}(0, I)$ with a uniform noise level of $\delta=5\%$. Unless otherwise specified, the parameter $N_{\mathrm{KL}}$ is fixed at 4 in all subsequent experiments. To validate the correctness of the far-field data, we have also included an additional experiment to verify the reciprocity relation. Specifically, we check whether the far-field data $u^\infty$ satisfy the following relation:
\begin{equation}\label{reciprocity}
    u_\infty(\hat{x}, d) = u_\infty(-d, -\hat{x}), \quad \hat{x}, d \in \mathbb{S}^1,
\end{equation}
Three plots in \autoref{pic:farfieldre} demonstrate that the data indeed satisfy this reciprocity condition. 
\begin{figure}[htbp]
    \centering
    \begin{subfigure}{0.3\textwidth}
        \includegraphics[width=\linewidth]{ 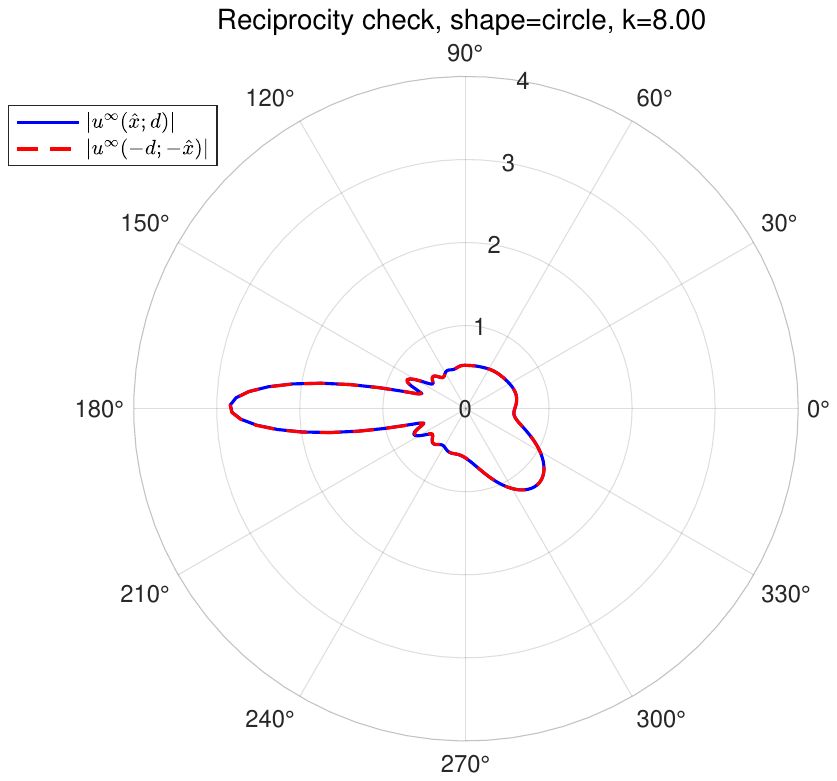}
        \caption{\small circle}
        \label{pic:far1}
    \end{subfigure}
    \hspace{0.5cm}
     \begin{subfigure}{0.3\textwidth}
        \includegraphics[width=\linewidth]{ 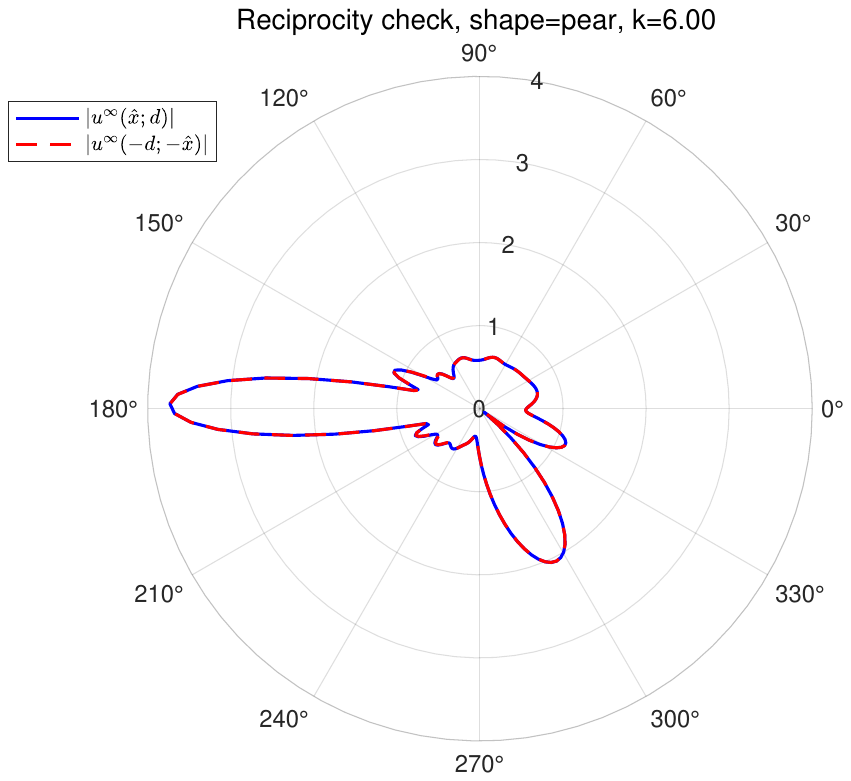}
        \caption{\small pear } 
        \label{pic:far2}
    \end{subfigure}%
    \hspace{0.5cm}
     \begin{subfigure}{0.3\textwidth}
  \includegraphics[width=\linewidth]{ 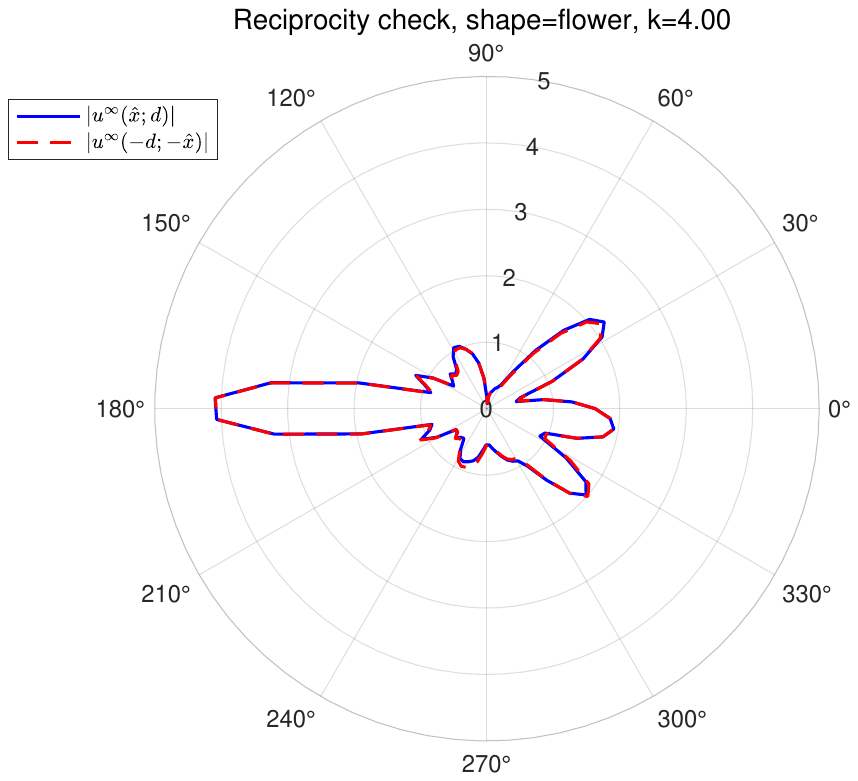}
  \caption{\small flower }
        \label{pic:far3}
    \end{subfigure}
    \caption{\small verification of reciprocity relation \eqref{reciprocity} plotted in polar coordinates. red dashed line: $|u^{\infty}(-d,-\hat{x})|$; blue solid line: $|u^{\infty}(\hat{x},d)|$. } 
    \label{pic:farfieldre}
    \end{figure}
    
For the low-frequency nonlinear  optimization problem \eqref{Tikhonov}, we employ the \texttt{fmincon} solver in MATLAB to solve it. All examples start with  the initial guess  by setting only the constant term $p_1\neq 0$ while setting all other coefficients to zero. The regularization parameter $\gamma=$ is $10^{-2}$ and the maximum number of iterations $N_{\mathrm{it}}$ is set to 50. The stopping criterion is $\|\Delta p\|/\|p\|\le 10^{-3}=\texttt{tol}$. The Fréchet derivative $\mathcal{F}'$ is approximated via a forward finite difference scheme with a step size of
$ \varepsilon_{\rm fd}=10^{-5}$.  For the high-frequency recursive linearization, the regularization parameter $\alpha$ is set to $0.1$. If certain samples exhibit significant shape inversion deviations (which will be specified in the numerical examples), this value is increased to $0.5$. Within each frequency $k_j$, a maximum of 5 iterations is allowed and early termination will be  triggered if the relative change in the step size $\Delta r_j$ falls below $10^{-3}$.  

    Since the inversion is carried out independently for each sample (the processes are mutually independent), the treatment of multi-sample data naturally lend itself to parallel computation. In our implementation, we utilize MATLAB's \texttt{parfor} to parallelize the operations across samples. However, as will be evident from the numerical results presented later, the proposed algorithm does not require a large amount of data, $20\sim50$ samples are sufficient to yield accurate results within a reasonable precision range.

 \subsection{Random Circle}
 \label{example:circle}
We first consider the scattering reconstruction for a circular random obstacle and begin by fixing the stochastic parameters as $\sigma=0.05$, $\ell=1$. The true circle has radius $R=1$ and is centered at the origin $(0,0)$. In our random circle reconstruction experiments, 20 samples are drawn for analysis. The random realizations of these samples are displayed in \autoref{fig:randomcircle} where the black solid line represents the unperturbed circular geometry and the red dashed lines depict the true geometries of the randomly perturbed samples. To visualize a random sample, \autoref{fig:samplecircle} shows the true scatterer geometry of the sixth sample and plots its far-field pattern at the corresponding frequency on a polar diagram.

\begin{figure}[htbp]
    \centering
    \begin{subfigure}{0.35\textwidth}
        \includegraphics[width=\linewidth]{ 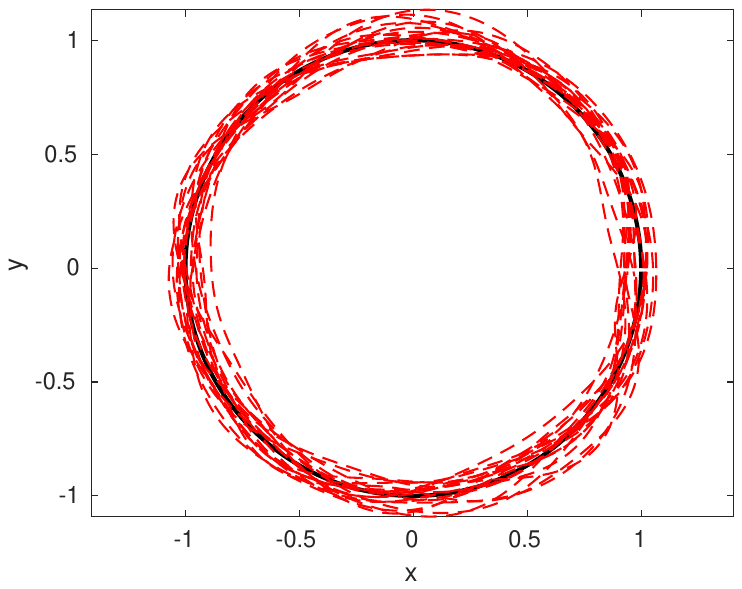}
        \caption{\small random realizations of circle }
        \label{fig:randomcircle}
    \end{subfigure}%
    \hspace{2cm} 
     \begin{subfigure}{0.35\textwidth}
        \includegraphics[width=\linewidth]{ 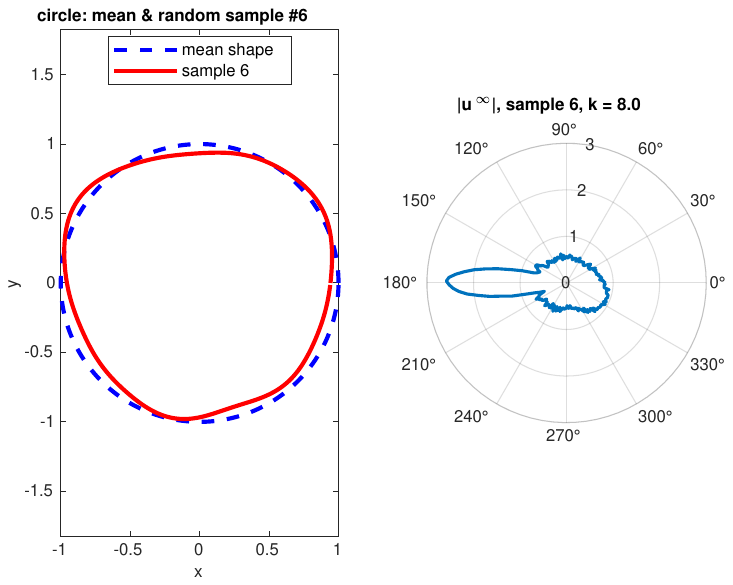}
        \caption{\small shape  and far-field data} 
        \label{fig:samplecircle}
    \end{subfigure}
    \caption{\small \subref{fig:randomcircle}: random realizations with 20 samples. dashed line: random circular scatterer; solid line: standard circular scatterer. \subref{fig:samplecircle}: geometric visualization of the $6$-th random sample and its corresponding far-field pattern data. left:  dashed line represents the circle; solid line represents the true scatterer shape. right: far-field pattern $u^\infty(\hat{x}_m,d,k=8)$ for sample $6$ plotted in polar coordinates.}
    \label{fig:KLco}
    \end{figure}

During the first-stage shape inversion, it can be observed from \autoref{cirrla} that the reconstruction at the lowest frequency is quite coarse. As the frequency increases, more geometric details are accurately captured until the reconstruction converges at the highest preset frequency.

    \begin{figure}[t]
\begin{center}
    \begin{overpic}[width=1\textwidth, tics=10]{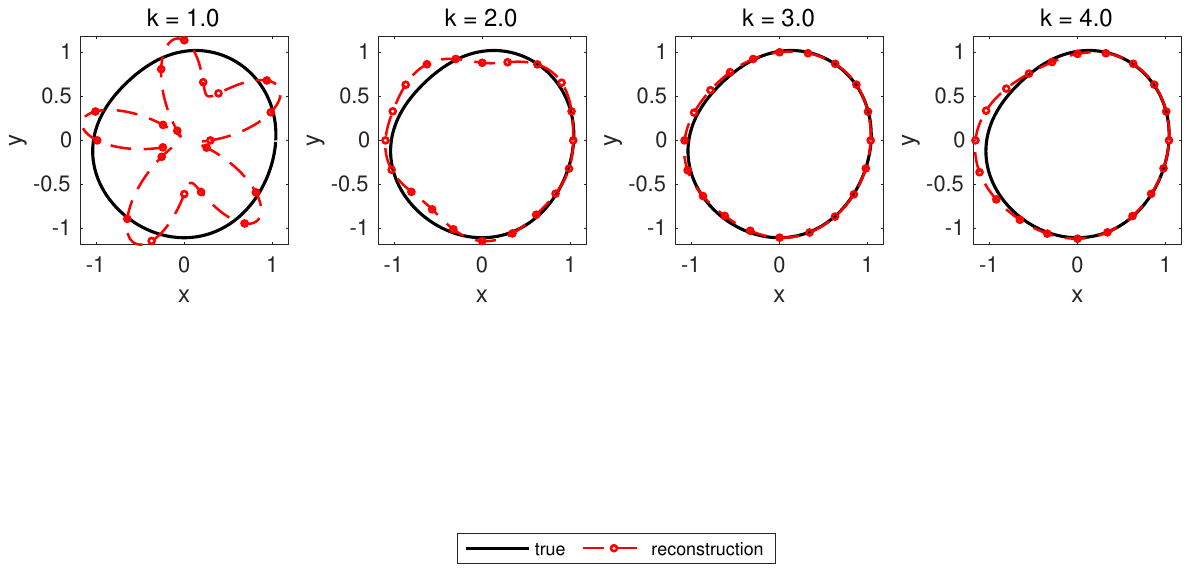}
        \put (30,30) {\scriptsize {\bf Geometric shape inversion at low frequency}}
    \end{overpic}
\end{center}
\vspace{-0.3cm}
 \begin{center}
    \begin{overpic}[width=1\textwidth, tics=10]{ 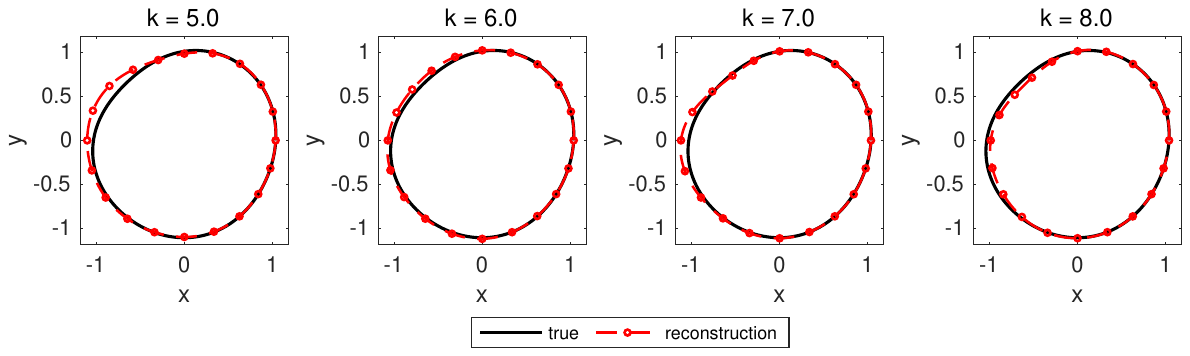}
        \put (30,30) {\scriptsize {\bf Geometric shape inversion at high frequency}}
    \end{overpic}
\end{center}
      \vspace{-0.2cm}
    \caption{\small visualization of Algorithm \ref{alg:mean-shape-random} executed for a single  circular sample.}
    \label{cirrla}
      \end{figure}


It is noteworthy that when inverting many realizations independently, a small number of reconstructions may behave as outliers: despite using the same algorithmic settings, they exhibit markedly poorer data consistency or noticeably less stable frequency continuation than the bulk of the samples, reflecting the intrinsic ill-posedness of the inverse problem. This ill-posedness may stem from an excessively large condition number of the coefficient matrix in the normal equations \eqref{normal} or from substantial deviations in the optimization results produced by \eqref{Tikhonov} or \eqref{RLATikhonov}. For such outlier samples, we can remediate these outliers by several methods such as increasing the regularization coefficient $\alpha$ and re-running the inversion for those chosen samples.  Once the corrected inversion results are in place, we simply replace the aberrant reconstructions with the successful ones. As shown in \autoref{pic:allcirsamp}, the reconstruction results for all 20 samples are displayed, with only sample $\#11$ exhibiting a significant deviation. To address this, sample $\#11$  is identified and re-inverted separately with an increased regularization parameter of $\alpha=0.5$. A comparison of the errors before and after this correction is presented in \autoref{fig:repair} below, demonstrating that increasing the regularization factor helps to mitigate ill-posed samples, and the shape comparison is shown in \autoref{fig:repair2}. The corrected results for samples $\#11\sim\#15$ are shown in \autoref{fig:11repair}.

  \begin{figure}[t]
    \begin{center}
    \begin{overpic}[width=1\textwidth,tics=10]{ 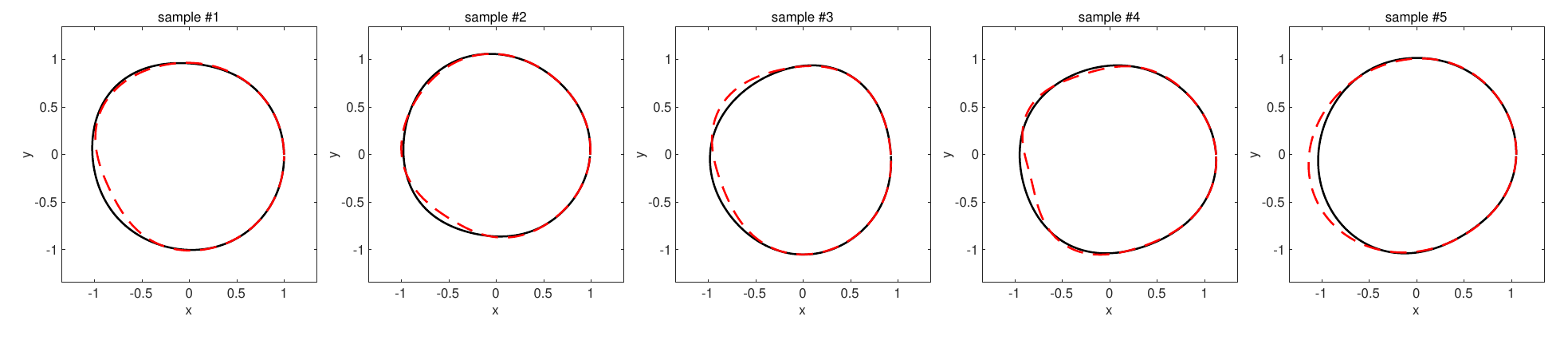}
    \end{overpic}
    \end{center}
    \vspace{-0.8cm}
    \begin{center}
        \begin{overpic}[width=1\textwidth,tics=10]{ 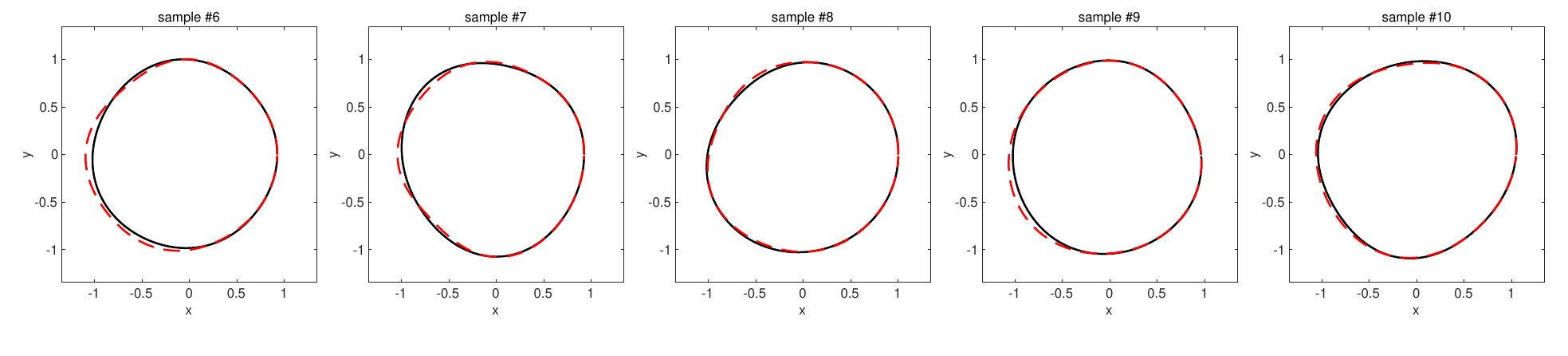}
      \end{overpic}
    \end{center}
    \vspace{-0.8cm}
\begin{center}
    \begin{overpic}[width=1\textwidth, tics=10]{ 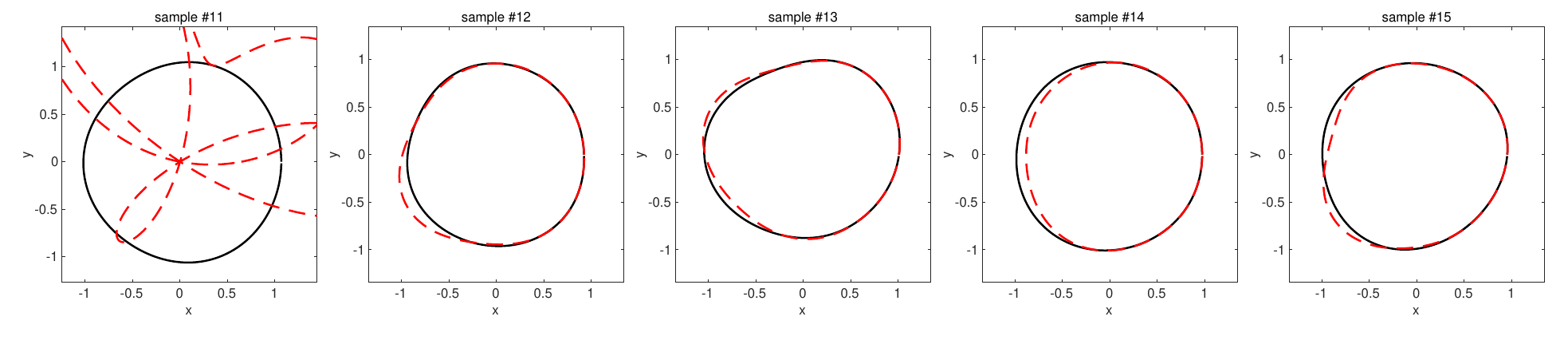}
    \end{overpic}
\end{center}
\vspace{-0.6cm}
 \begin{center}
    \begin{overpic}[width=1\textwidth, tics=10]{ 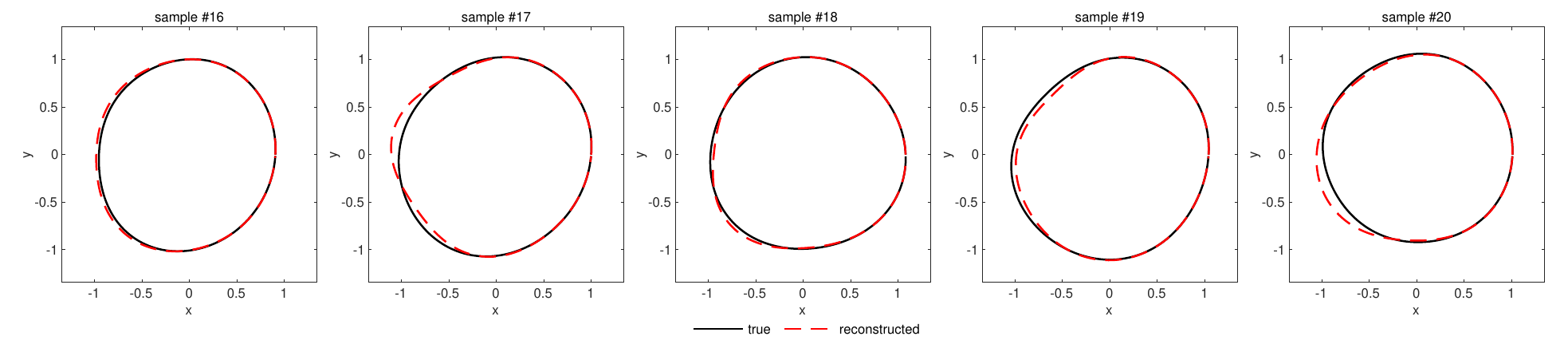}
    \end{overpic}
\end{center}
      \vspace{-0.2cm}
    \caption{ \small reconstruction results for all circular samples  without repairing.}
    \label{pic:allcirsamp}
      \end{figure}

      \begin{figure}[htbp]
    \centering
    \begin{subfigure}{0.35\textwidth}
        \includegraphics[width=\linewidth]{ 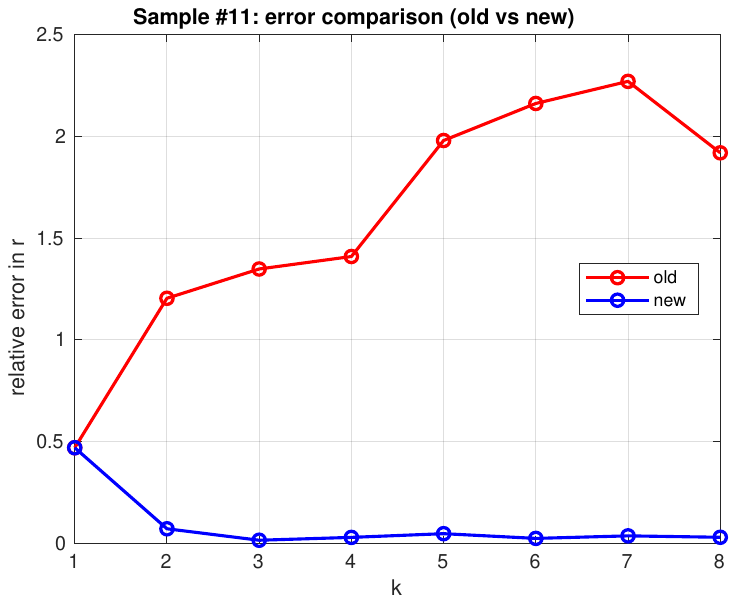}
        \caption{\small error comparison }
        \label{fig:repair}
    \end{subfigure}%
    \hspace{2cm} 
     \begin{subfigure}{0.35\textwidth}
        \includegraphics[width=\linewidth]{ 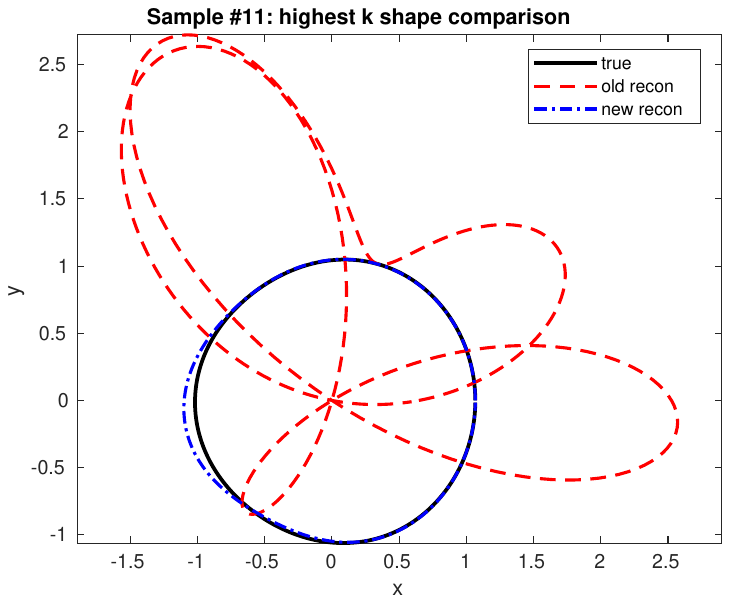}
        \caption{\small shape comparison} 
        \label{fig:repair2}
    \end{subfigure}
    \caption{\small \subref{fig:repair}: reconstruction error for sample $\#11$: initial vs. corrected inversion compared to true geometry. \subref{fig:repair2}: comparison of the old sand new shape inversion curves for sample $\#11$.}
    \label{fig:cirrepair}
    \end{figure}

\begin{figure}[htbp]
    \centering
        \includegraphics[width=\linewidth]{ 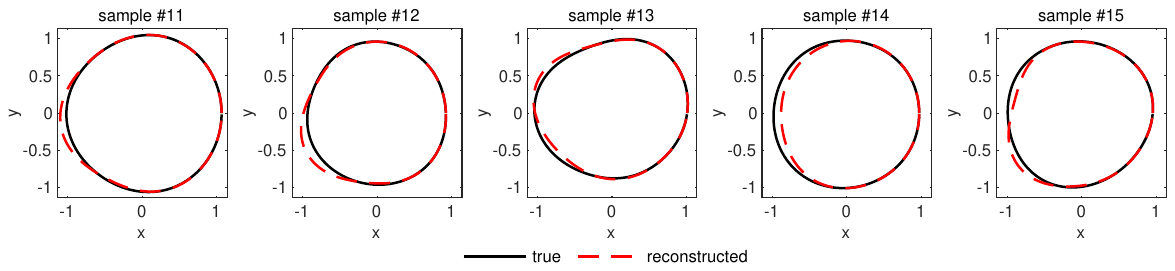}
        \caption{\small reconstruction results for $\#11\sim\#15$ circular samples after repairing.}
        \label{fig:11repair}
    \end{figure}
    
Combining \autoref{pic:allcirsamp} and  \autoref{fig:11repair} we can observe that the first-stage algorithm achieves a good recovery of the geometric shapes. The average value of the geometric radii across these samples is calculated to be 1.001, which aligns well with the true radius parameter of the circle. Upon reconstructing the shape of each sample, we execute the second-stage algorithm to invert the KL coefficients and plot the three comparison curves shown in \autoref{fig:finalcir}. According to the truncation rule set earlier, only the first six modes of eigenvalues are selected here. The blue curve represents the theoretical $\lambda_j$ values, the black curve corresponds to the KL coefficient curve obtained by inverting the true stochastic geometric samples via the two-stage algorithm in the forward problem and the red curve shows the final inversion results obtained from the far-field data through the two-stage inversion. It can be observed that the three curves exhibit a relatively high degree of agreement. Finally, the inversion results for 
$\sigma$ and $\ell$ are presented in \autoref{tab:circase1}. \autoref{tab:circase1} compares the preset (true) values, the values obtained from the true geometry (reference) and the final estimates from our two-stage algorithm. We also tested another case with parameters $\sigma=0.08$ and $\ell=0.7$. The reconstruction results for the eigenvalues 
$\lambda_j$ and the statistical parameters are presented in \autoref{fig:finalancir} and \autoref{tab:circase2}, respectively, both showing satisfactory agreement with the true values within the expected error margin.

\begin{table}[htbp]
\centering

\begin{subtable}[t]{0.48\textwidth}
\centering
\begin{tabular}{c|ccc}
\hline
 & 

True (Preset)  & Reference  & \bf{Estimated} \\
\hline
$\sigma$ & 0.05 &0.0507 & \bf{0.0571} \\
$\ell$   & 1 & 1.0771  & \bf{0.9586} \\
\hline
\end{tabular}
\caption{case 1}
\label{tab:circase1}
\end{subtable}
\hfill
\begin{subtable}[t]{0.48\textwidth}
\centering
\begin{tabular}{c|ccc}
\hline
& True (Preset)  & Reference  & \bf{Estimated}\\
\hline
$\sigma$ & 0.08 & 0.0746 & \bf{0.0874} \\
$\ell$   & 0.7  & 0.7791 & \bf{0.7484} \\
\hline
\end{tabular}
\caption{case 2}
\label{tab:circase2}
\end{subtable}
\caption{comparison of the preset (true) values, reference values computed from the true geometry (reference) and the final reconstructed values for parameters $\sigma$ and $\ell$ in the circular scatterer case.}
\label{tab:ciroverall_comparison}
\end{table}

 \begin{figure}[htbp]
    \centering
    \begin{subfigure}{0.45\textwidth}
        \includegraphics[width=\linewidth]{ 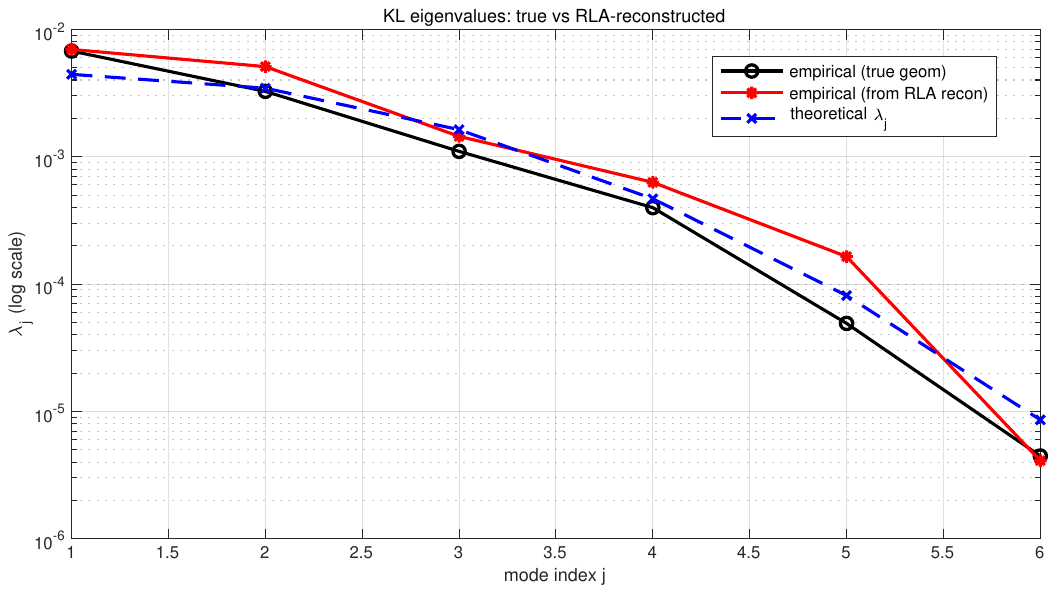}
        \caption{\small case 1: reconstructed $\lambda_j$ }
        \label{fig:finalcir}
    \end{subfigure}%
    \hspace{1cm} 
     \begin{subfigure}{0.45\textwidth}
        \includegraphics[width=\linewidth]{ 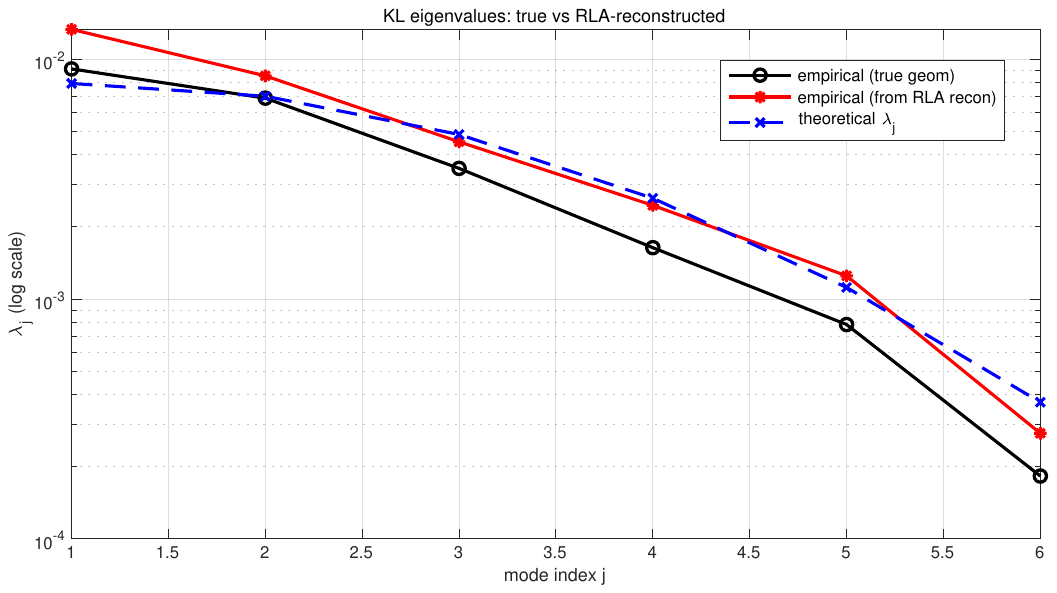}
        \caption{\small case 2: reconstructed $\lambda_j$ } 
        \label{fig:finalancir}
    \end{subfigure}
    \caption{\small random circle numerical examples: blue dashed line: theoretical $\lambda_j$ values; black solid line: eigenvalues of the discrete covariance matrix from the true stochastic fluctuations; red solid line: eigenvalues of the reconstructed discrete covariance matrix from the algorithm. \subref{fig:finalcir}: reconstructed $\lambda_j$ when $\sigma=0.05$, $\ell=1$. \subref{fig:finalancir}: reconstructed $\lambda_j$ when $\sigma=0.08$, $\ell=0.7$.}
    \label{fig:cirrepair}
    \end{figure}

\subsection{Random Pear}
We next consider the reconstruction of a  more complex geometry: the pear shape, along with its statistical parameters. The true geometry of the pear we consider here is described by
\[
r(\theta) = 1.5 + 0.3 \sin(3\theta), \quad
x(\theta) = r(\theta)\cos\theta,\quad
y(\theta) = r(\theta)\sin\theta,\quad 0 \leq \theta < 2\pi,
\]
In this experiment we also select total 20 samples for the computation and start with the setting $\sigma=0.05$, $\ell=1$. We choose $N_r=5$ in pear settings such that $n_r=2\times5+1=11$ in its Fourier representation as given in \eqref{fou}. \autoref{fig:randompear} displays the random sample realizations. The black solid line indicates the reference pear shape while the red dashed lines show the actual geometries of the randomly perturbed samples. For the pear-shaped scatterer, \autoref{fig:samplepear} visualizes a random sample by displaying the true geometry of the sixth instance and its far-field pattern on a polar diagram. \autoref{pearrla} shows the progression of the first-stage inversion for the pear shape at multiple frequencies. The reconstruction undergoes a stepwise refinement with increasing frequency.
\begin{figure}[htbp]
    \centering
    \begin{subfigure}{0.35\textwidth}
        \includegraphics[width=\linewidth]{ 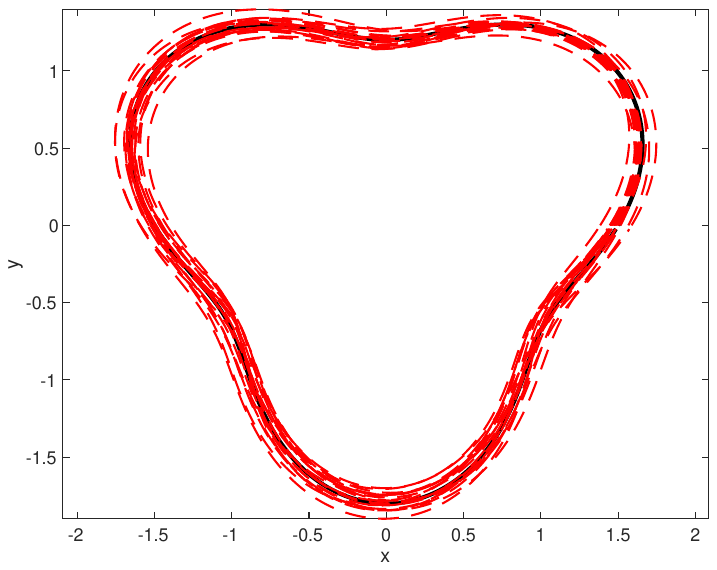}
        \caption{\small random realizations of pear}
        \label{fig:randompear}
    \end{subfigure}%
    \hspace{2cm} 
     \begin{subfigure}{0.35\textwidth}
        \includegraphics[width=\linewidth]{ 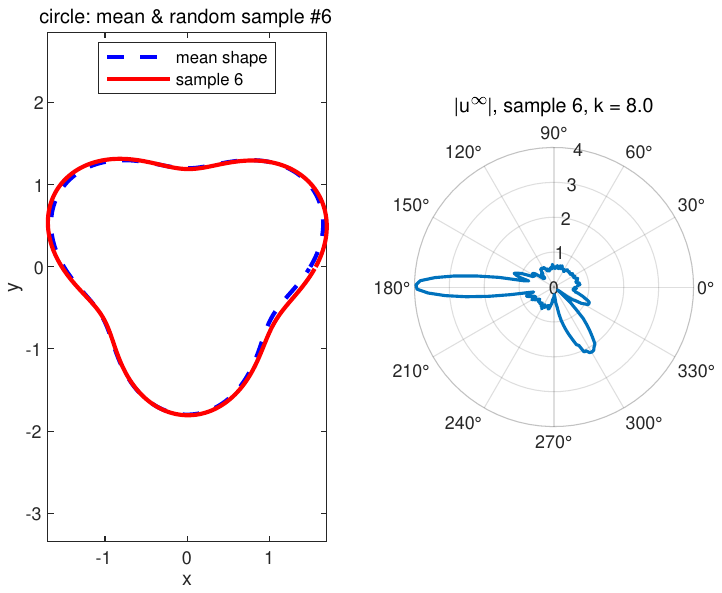}
        \caption{\small shape  and far-field data} 
        \label{fig:samplepear}
    \end{subfigure}
    \caption{\small \subref{fig:randompear}: random realization with 20 samples. dashed line: random pear-shaped scatterer; solid line: standard pear-shaped scatterer. \subref{fig:samplepear}: geometric visualization of the $6$-th random sample and its corresponding far-field pattern data. left:  dashed line represents the pear; solid line represents the true scatterer shape. right: far-field pattern $u^\infty(\hat{x}_m,d,k=8)$ for sample $6$ plotted in polar coordinates.}
    \label{fig:randomp}
    \end{figure} 

   \begin{figure}[t]
\begin{center}
    \begin{overpic}[width=1\textwidth, tics=10]{ 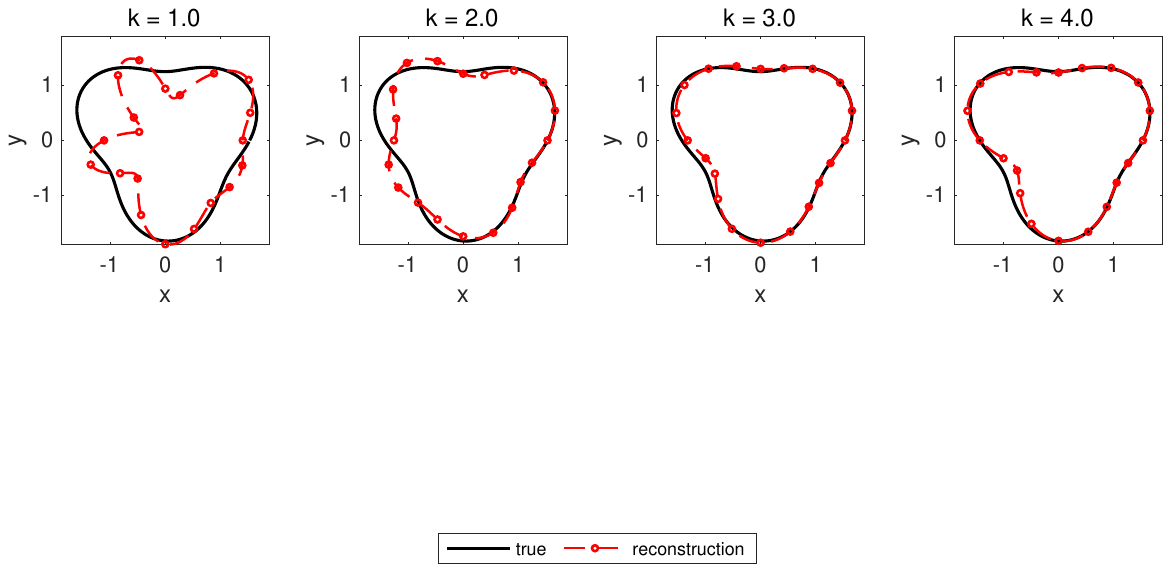}
        \put (30,31) {\scriptsize {\bf Geometric shape inversion at low frequency}}
    \end{overpic}
\end{center}
\vspace{-0.3cm}
 \begin{center}
    \begin{overpic}[width=1\textwidth, tics=10]{ 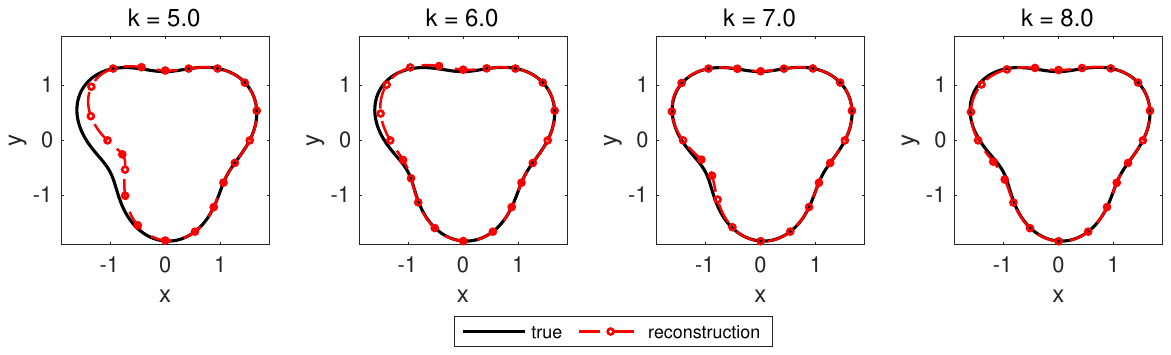}
        \put (30,31) {\scriptsize {\bf Geometric shape inversion at high frequency}}
    \end{overpic}
\end{center}
      \vspace{-0.4cm}
    \caption{\small visualization of Algorithm \ref{alg:mean-shape-random} executed for a single pear-shaped sample.}
    \label{pearrla}
      \end{figure}

Following the same workflow as given in the circular case (20 samples), we omit detailed sample-wise plots for brevity. For any sample exhibiting anomalous deviations, the inversion can be corrected by re-running it with an increased regularization parameter $\alpha$ or similar corrective measures. Once the shape inversion for all samples is completed, a comparison between the reconstructed mean and the true geometric parameters is presented, as shown in \autoref{fig:meanpear}. It is computed that the relative $L^2$ error between sample-mean radius and true radius is $7.827e-03$. For the second-stage inversion of statistical quantities, \autoref{fig:finalpear} presents the reconstructed KL coefficients for the pear shape, showing favorable reconstruction performance within acceptable tolerances of their theoretical values. Subsequently, the parameters $\sigma$ and $\ell$ are recovered, with the results displayed in \autoref{tab:pearcase1}. We also tested a configuration with $\sigma=0.08$ and $\ell=0.8$. The reconstruction results are shown in \autoref{fig:finalanpear} and the corresponding parameter estimates are listed in \autoref{tab:pearcase2}. From the numerical results in  \autoref{tab:pearoverall_comparison}, we can see that our proposed algorithm is effective for moderately complex geometries, yielding reconstructed parameters that all fall within a reasonable range across various experimental settings.



\begin{figure}[htbp]
    \centering
    \begin{subfigure}{0.35\textwidth}
        \includegraphics[width=\linewidth]{ 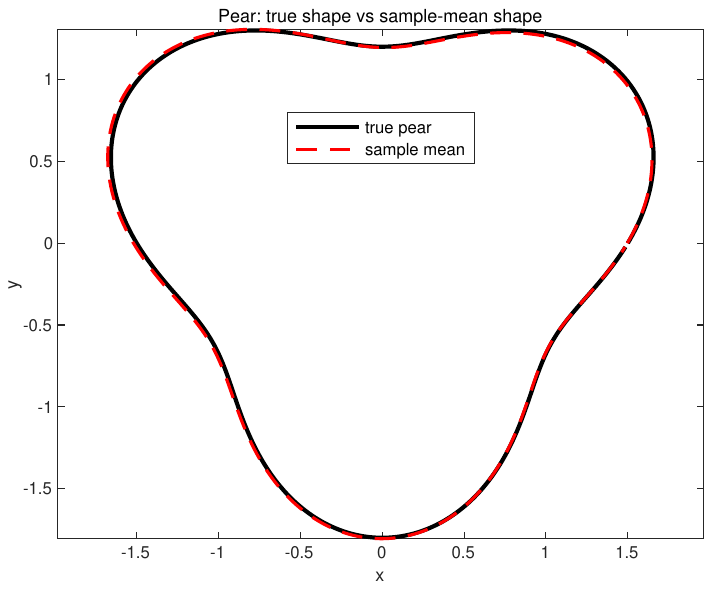}
        \caption{\small pear}
        \label{fig:meanpear}
    \end{subfigure}%
    \hspace{2cm} 
     \begin{subfigure}{0.35\textwidth}
        \includegraphics[width=\linewidth]{ 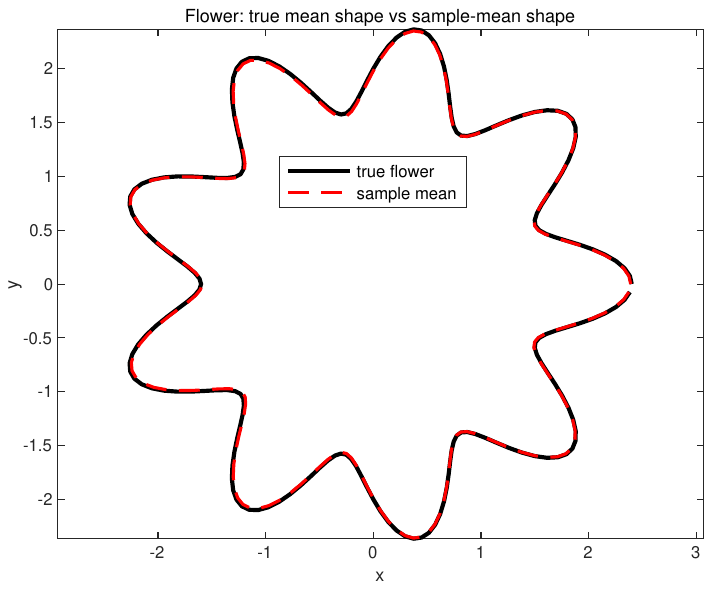}
        \caption{\small flower} 
        \label{fig:meanflo}
    \end{subfigure}
    \caption{\small \subref{fig:meanpear}: true shape vs sample-mean shape. \subref{fig:meanflo}: true shape vs sample-mean shape.}
    \label{fig:mean}
    \end{figure}

 \begin{figure}[htbp]
    \centering
    \begin{subfigure}{0.45\textwidth}
        \includegraphics[width=\linewidth]{ 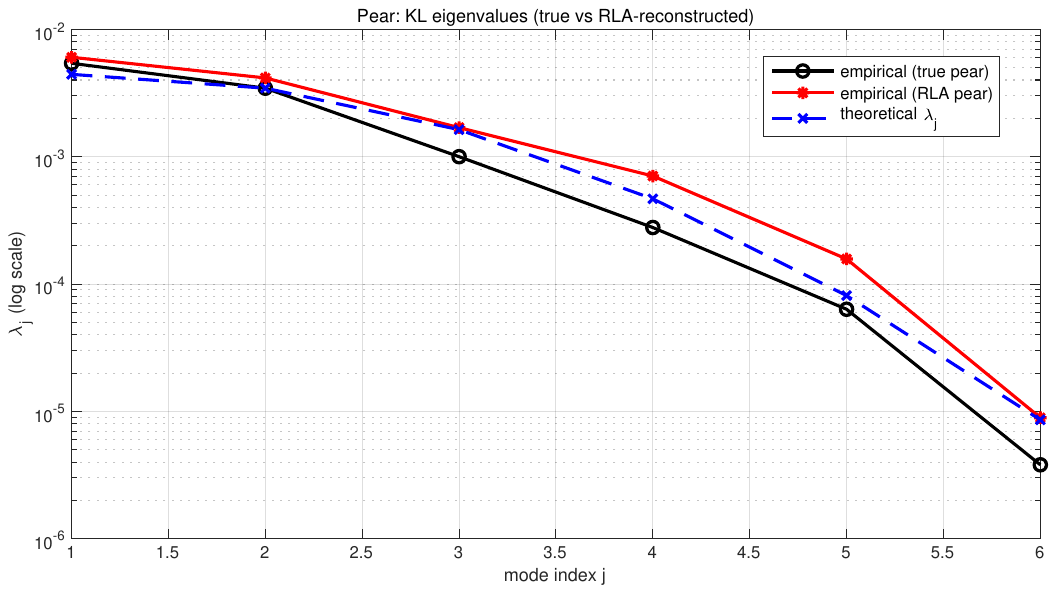}
        \caption{\small case 1: reconstructed $\lambda_j$ }
        \label{fig:finalpear}
    \end{subfigure}%
    \hspace{1cm} 
     \begin{subfigure}{0.45\textwidth}
        \includegraphics[width=\linewidth]{ 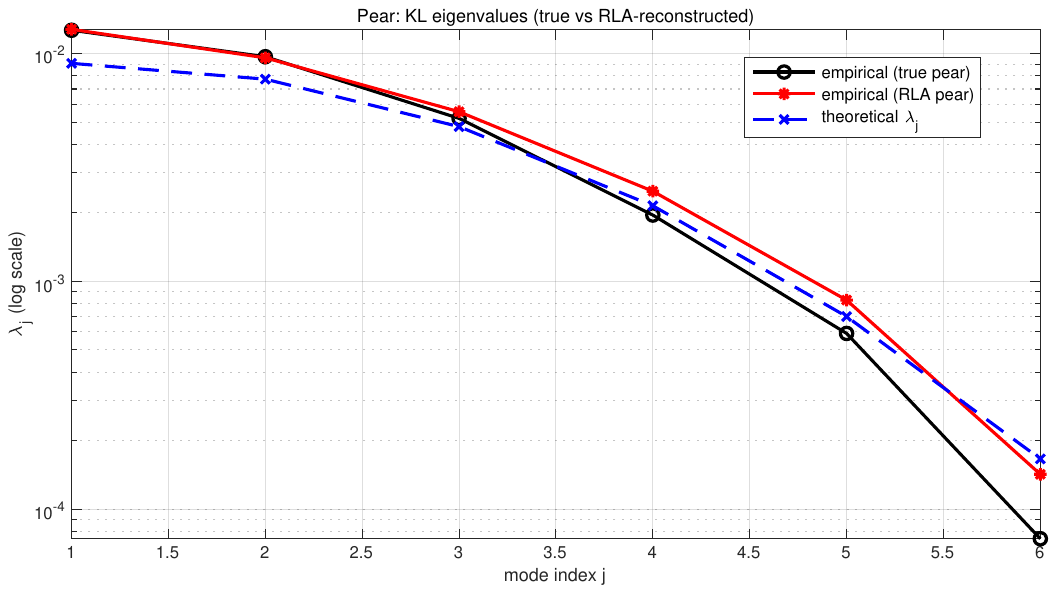}
        \caption{\small case 2: reconstructed $\lambda_j$ } 
        \label{fig:finalanpear}
    \end{subfigure}
    \caption{\small random pear numerical examples: blue dashed line: theoretical $\lambda_j$ values; black solid line: eigenvalues of the discrete covariance matrix from the true stochastic fluctuations; red solid line: eigenvalues of the reconstructed discrete covariance matrix from the algorithm.  \subref{fig:finalpear}: reconstructed $\lambda_j$ when $\sigma=0.05$, $\ell=1$. \subref{fig:finalanpear}: reconstructed $\lambda_j$ when $\sigma=0.08$, $\ell=0.8$.}
    \label{fig:pearrepair}
    \end{figure}

          \begin{table}[htbp]
\centering
\begin{subtable}[t]{0.48\textwidth}
\centering
\begin{tabular}{c|ccc}
\hline
 & 
True (Preset)  & Reference  & \bf{Estimated} \\
\hline
$\sigma$ & 0.05 &0.0473 & \bf{0.0555} \\
$\ell$   & 1 & 1.0478  & \bf{0.9408} \\
\hline
\end{tabular}
\caption{case 1}
\label{tab:pearcase1}
\end{subtable}
\hfill
\begin{subtable}[t]{0.48\textwidth}
\centering
\begin{tabular}{c|ccc}
\hline
& True (Preset)  & Reference  & \bf{Estimated}\\
\hline
$\sigma$ & 0.08 & 0.0872 & \bf{0.0896} \\
$\ell$   & 0.8  & 0.8729 & \bf{0.8191} \\
\hline
\end{tabular}
\caption{case 2}
\label{tab:pearcase2}
\end{subtable}
\caption{comparison of the preset (true) values, reference values computed from the true geometry and the final reconstructed values for parameters $\sigma$ and $\ell$ in the pear-shaped scatterer case.}
\label{tab:pearoverall_comparison}
\end{table}
 
 \subsection{Random Flower}
In this section, we attempt to apply the inversion algorithm proposed earlier to a scatterer with a more complex geometry—the flower shape. The flower boundary used in this work follows the parametric form:
\[
r(\theta) = c_1\bigl(1 + c_2 \cos(c_3 \theta)\bigr),\quad x(\theta) = r(\theta)\cos\theta, \quad
y(\theta) = r(\theta)\sin\theta,
\quad 0 \le \theta < 2\pi,
\] where $c_1 = 2,\ c_2 = 0.2,\ c_3 = 9$. The values of the statistical parameters are maintained as $\sigma=0.05$ and $\ell=1$. We choose $N_r=10$ in flower settings such that $n_r=2\times10+1=21$. Since the shape is more complex, $\alpha$ is set to 0.5 at first. The maximum wavenumber is kept consistent with the previous models at $k_{\max}=8$, i.e. $k_j = j,\   j = 1, \dots, n_K$ where $ n_K = 8$. A set of random realizations for the flower-shaped scatterer is presented in \autoref{fig:randomflo}. \autoref{fig:sampleflo} presents a random realization of the flower-shaped scatterer, featuring both its exact geometry and the corresponding far-field pattern on a polar plot. 
For the flower shape, we again first employ a set of 20 samples to examine the numerical outcomes. \autoref{flowerrla} depicts the inversion process of our algorithm applied to an individual flower-shaped sample. \autoref{pic:allflowsamp}, in turn, presents the ensemble of final reconstruction results for all 20 samples at the highest frequency.
\begin{figure}[htbp]
    \centering
    \begin{subfigure}{0.35\textwidth}
        \includegraphics[width=\linewidth]{ 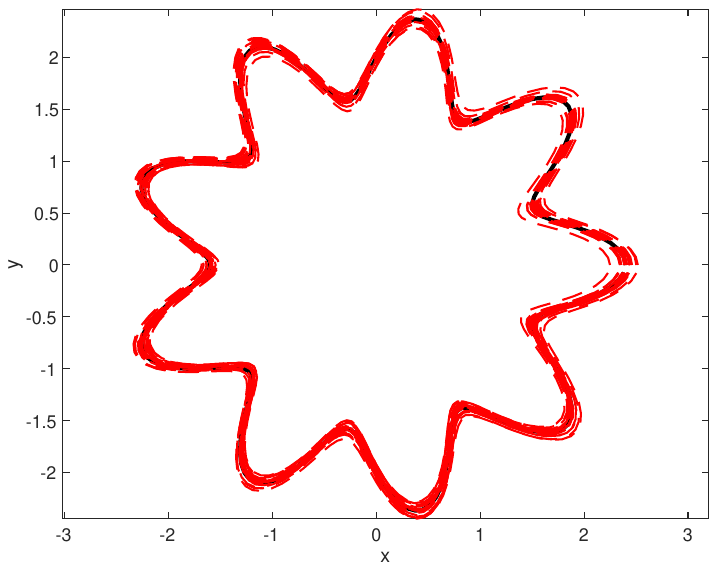}
        \caption{\small random realizations of flower}
        \label{fig:randomflo}
    \end{subfigure}%
    \hspace{2cm} 
     \begin{subfigure}{0.35\textwidth}
        \includegraphics[width=\linewidth]{ 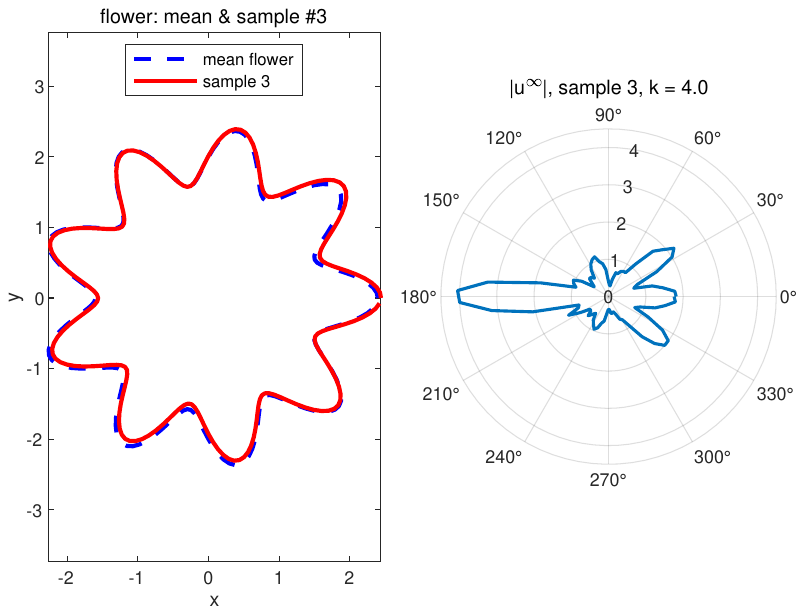}
        \caption{\small shape  and far-field data} 
        \label{fig:sampleflo}
    \end{subfigure}
    \caption{\small \subref{fig:randomflo}: random realization with 20 samples. dashed line: random flower-shaped scatterer; solid line: standard flower-shaped scatterer. \subref{fig:sampleflo}: geometric visualization of the $3$-th random sample and its corresponding far-field pattern data. left:  dashed line represents the circle; solid line represents the true scatterer shape. right: far-field pattern $u^\infty(\hat{x}_m,d,k=4)$ for sample $3$ plotted in polar coordinates.}
    \label{fig:randomf}
    \end{figure}

  \begin{figure}[t]
\begin{center}
    \begin{overpic}[width=1\textwidth, tics=10]{ 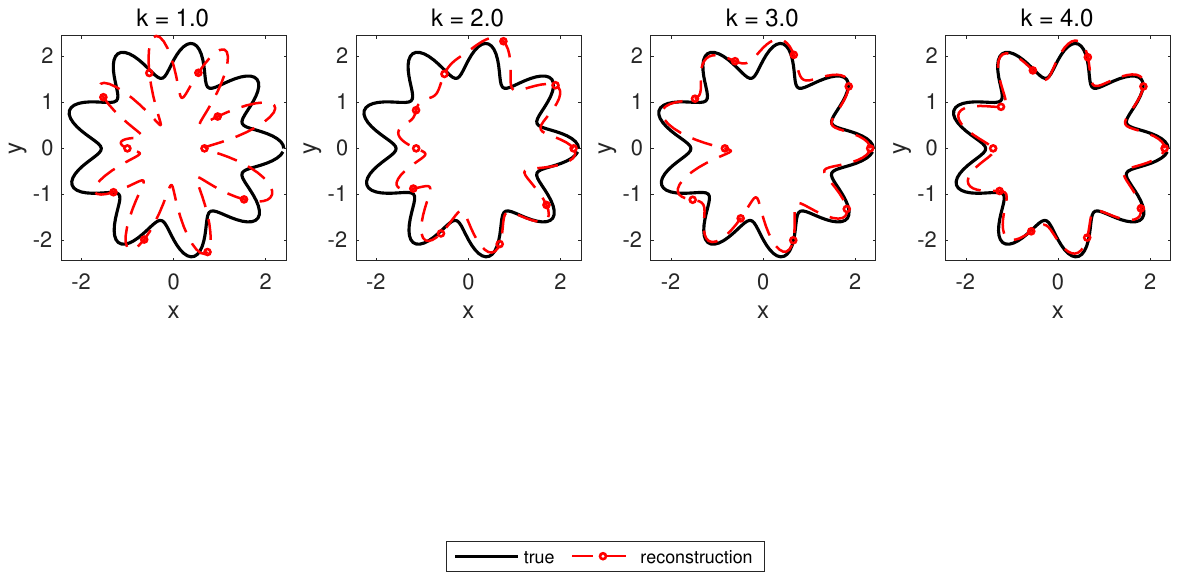}
        \put (30,32) {\scriptsize {\bf Geometric shape inversion at low frequency}}
    \end{overpic}
\end{center}
\vspace{-0.5cm}
 \begin{center}
    \begin{overpic}[width=1\textwidth, tics=10]{ 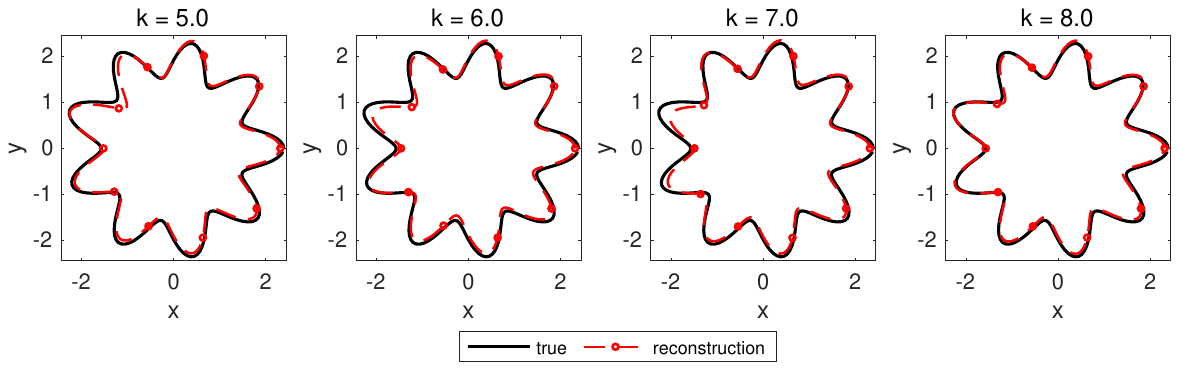}
        \put (30,32) {\scriptsize {\bf Geometric shape inversion at high frequency}}
    \end{overpic}
\end{center}
      \vspace{-0.4cm}
    \caption{visualization of Algorithm \ref{alg:mean-shape-random} executed for a single flower-shaped sample.}
    \label{flowerrla}
      \end{figure}

 \begin{figure}[t]
    \begin{center}
    \begin{overpic}[width=1\textwidth,tics=10]{ 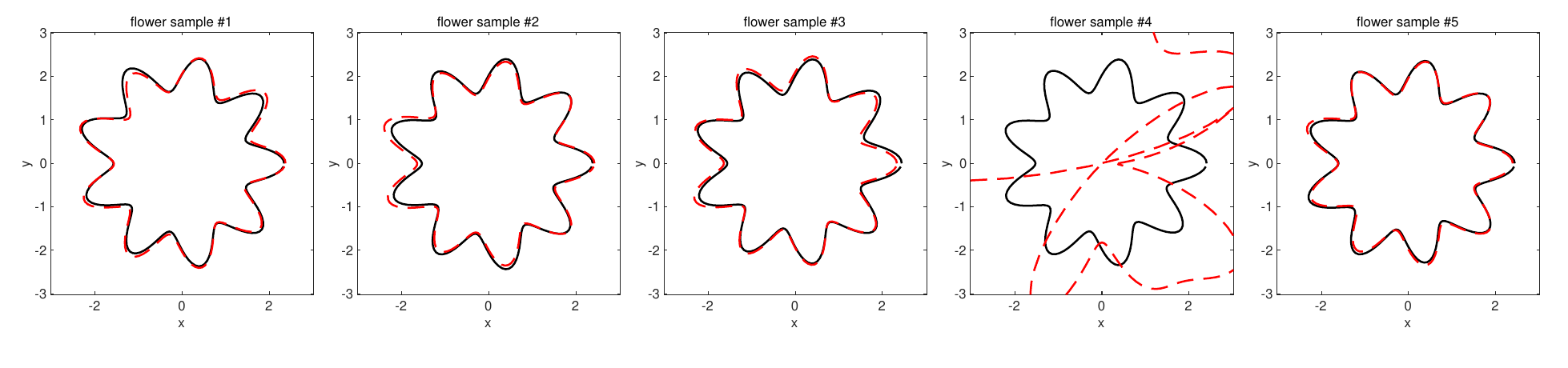}
    \end{overpic}
    \end{center}
    \vspace{-0.8cm}
    \begin{center}
        \begin{overpic}[width=1\textwidth,tics=10]{ 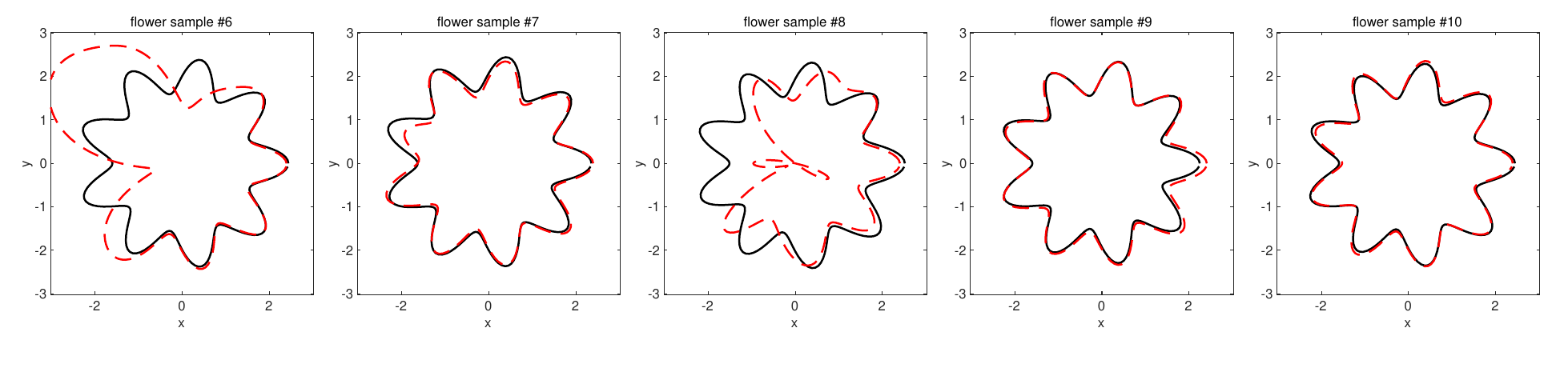}
      \end{overpic}
    \end{center}
    \vspace{-0.8cm}
\begin{center}
    \begin{overpic}[width=1\textwidth, tics=10]{ 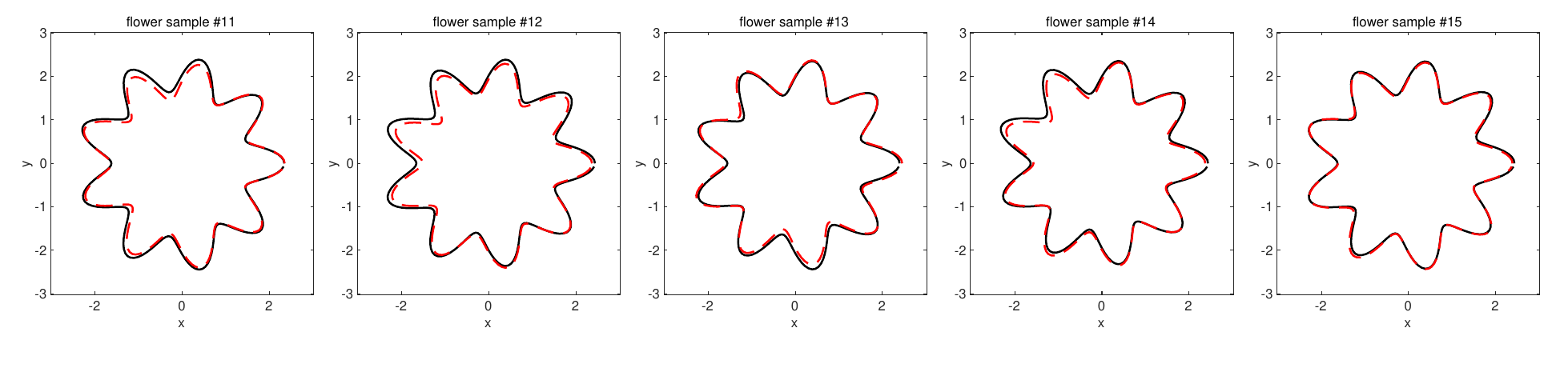}
    \end{overpic}
\end{center}
\vspace{-0.6cm}
 \begin{center}
    \begin{overpic}[width=1\textwidth, tics=10]{ 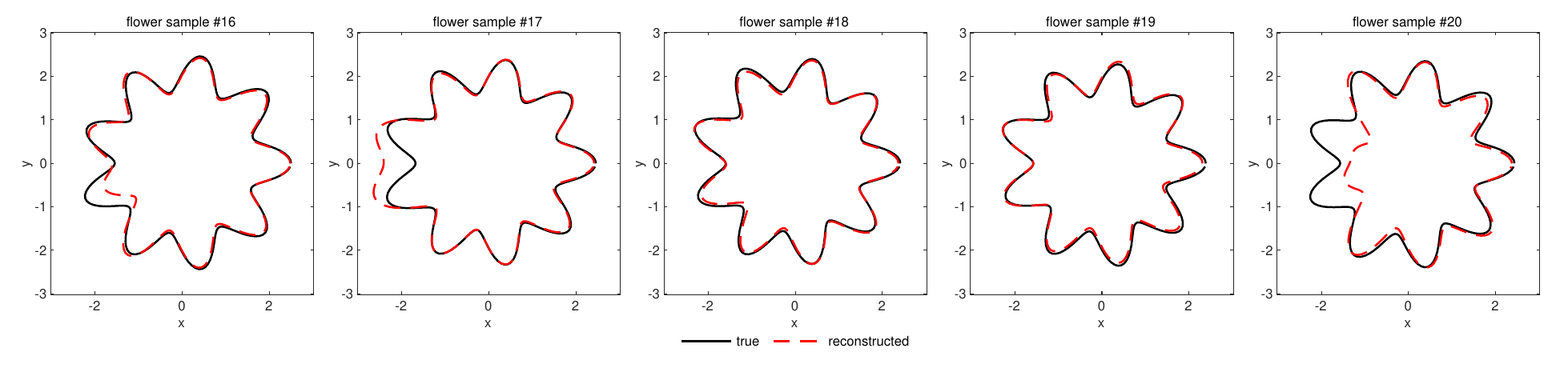}
    \end{overpic}
\end{center}
      \vspace{-0.2cm}
    \caption{ \small reconstructions of all 20 flower-shaped samples following the execution of Algorithm \ref{alg:mean-shape-random}.}
    \label{pic:allflowsamp}
      \end{figure}

By comparing the reconstruction results in \autoref{pic:allcirsamp} and \autoref{pic:allflowsamp}, it is clear that samples from complex geometries are more susceptible to failed reconstructions in the first shape inverting stage than those from simpler shapes, i.e., the number of anomalous samples rises with geometric complexity.  In spite of this growth, the proportion of anomalous samples constitutes a minority within the total set, indicating a relative robustness of the first stage in geometry recovery. While regularization-based correction (e.g., scaling up the penalty term $\alpha$ as we demonstrated in \autoref{example:circle} for the circular geometry) remains feasible for mitigating anomalous realizations, its effectiveness is markedly limited in the present flower-shaped configuration: sometimes even a larger regularization parameter ($\alpha=1 \ \text{or}\  5$) may also fail to recover a stable and physically consistent solution.

In light of the diminishing returns of aggressive regularization, we adopt an alternative strategy to mitigate occasional divergence in the multi-frequency continuation: rather than enforcing convergence for every realization, we perform an automated quality-control (QC) screening and restrict the subsequent statistical inversion to a subset of reliable reconstructions. Specifically, for each sample we quantify (i) the continuation stability across the last few frequency steps by the relative update 
\begin{equation}\label{QC}
    \eta_s^{(\max)} := \max_{j=n_K-m,\dots,n_K-1}\frac{\|r^{(s)}(k_{j+1})-r^{(s)}(k_j)\|_2}{\|r^{(s)}(k_{j+1})\|_2},
\end{equation}
and (ii) the data consistency by the final objective value $G^{(s)}_{\mathrm{final}}:=G^{(s)}_\gamma(k_{n_K})$, then we  discard outliers whose $\eta_s^{(\max)}$ or $G^{(s)}_{\mathrm{final}}$ lies in the upper tail of the empirical distribution. The stability metric $\eta_s^{(\max)}$ reflects the continuation principle that reconstructions should change mildly when the frequency is increased by a small increment while the final objective value $G^{(s)}_{\mathrm{final}}$ measures data consistency at the highest frequency, here ``data consistency" refers to the agreement between the measured far-field data and the model prediction associated with the reconstructed shape at $k_{n_K}$. For well-reconstructed samples, both $\eta_s^{(\max)}$ and $G^{(s)}_{\mathrm{final}}$ are expected to be small, hence this statistically defined screening rule selects a subset of reconstructions that are simultaneously stable across frequencies and consistent with the measured data.  Applying the above QC rule to the flower experiment showed in \autoref{pic:allflowsamp} with quantile thresholds chosen as $q_\eta=0.85$ and $q_G=0.90$ in that example,  the outcome leaves the well-behaved subset of samples $\#1$, $\#2$, $\#3$, $\#5$, $\#7$, $\#9$, $\#10$, $\#11$, $\#12$, $\#13$, $\#14$, $\#15$, $\#18$, $\#19$. Samples exhibiting extreme deviations (e.g.,$ \#4,\#6,\#8$) and mild but persistent inconsistencies (e.g., $\#16,\#17,\#20$) in \autoref{pic:allflowsamp} are excluded. Using the selected samples, we compute the empirical mean shape; the relative $L^2$ error between the resulting mean radius and the true radius is $6.649\times 10^{-3}$, see \autoref{fig:meanflo}. The corresponding reconstructed KL eigenvalues and the recovered covariance parameters $\sigma$ and $\ell$ are reported in \autoref{goodfinal1} and \autoref{tab:flower_val}, respectively. Finally, we observe that geometric complexity has a more pronounced impact on higher-order (smaller) KL modes: while the first few modes are captured with reasonable accuracy, noticeable discrepancies appear in later modes (e.g., the fifth and sixth), cf. \autoref{goodfinal1}. This is consistent with our analysis that the recovery of high-frequency spectral content is the most sensitive part of the pipeline; residual inaccuracies in these components propagate to the estimation of $\sigma$ and $\ell$, as reflected in \autoref{tab:flower_val}.

\begin{table}[htbp]
\centering
\begin{tabular}{c|cccc}   
\hline
 & True (Preset)  & Reference  & \bf{Estimated (4 modes)} & \bf{Estimated (6 modes)}  \\
\hline
$\sigma$  & 0.05 &0.0506 & \bf{0.0540} &  \bf{0.0524} \\
$\ell$  & 1  &  1.0634 & \bf{1.0107} & \bf{0.7660}\\
\hline
\end{tabular}
\caption{comparison of $\sigma$ and $\ell$ inversion results with different modal configurations.}
\label{tab:flower_val}
\end{table}

 \begin{figure}[htbp]
    \centering
    \begin{subfigure}{0.35\textwidth}
        \includegraphics[width=\linewidth]{ 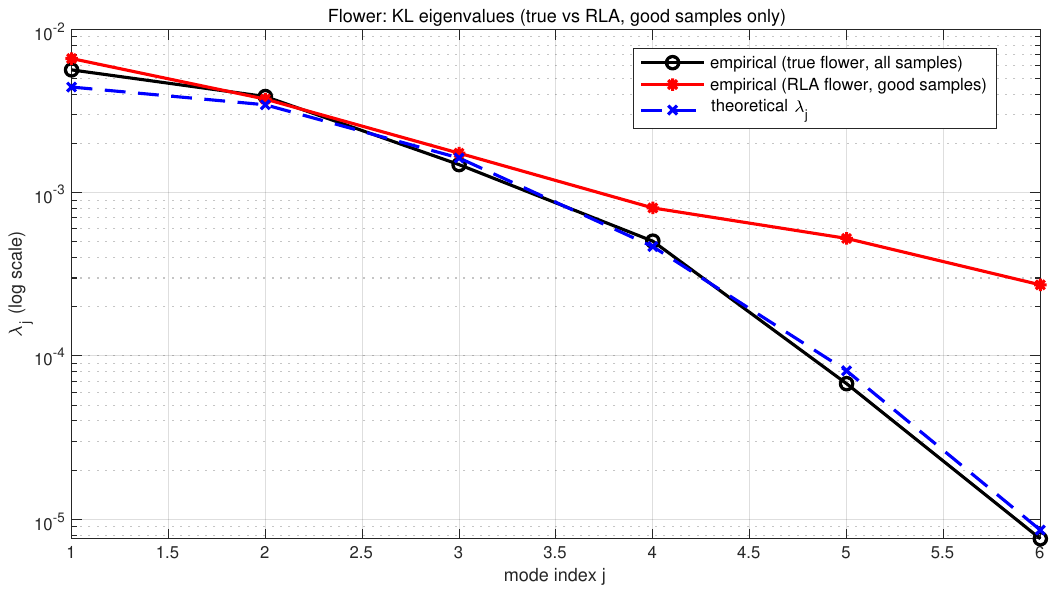}
        \caption{\small $\lambda_j$ from $14$  samples, $k_{n_K}=8$ }
        \label{goodfinal1}
    \end{subfigure}%
     \begin{subfigure}{0.35\textwidth}
        \includegraphics[width=\linewidth]{ 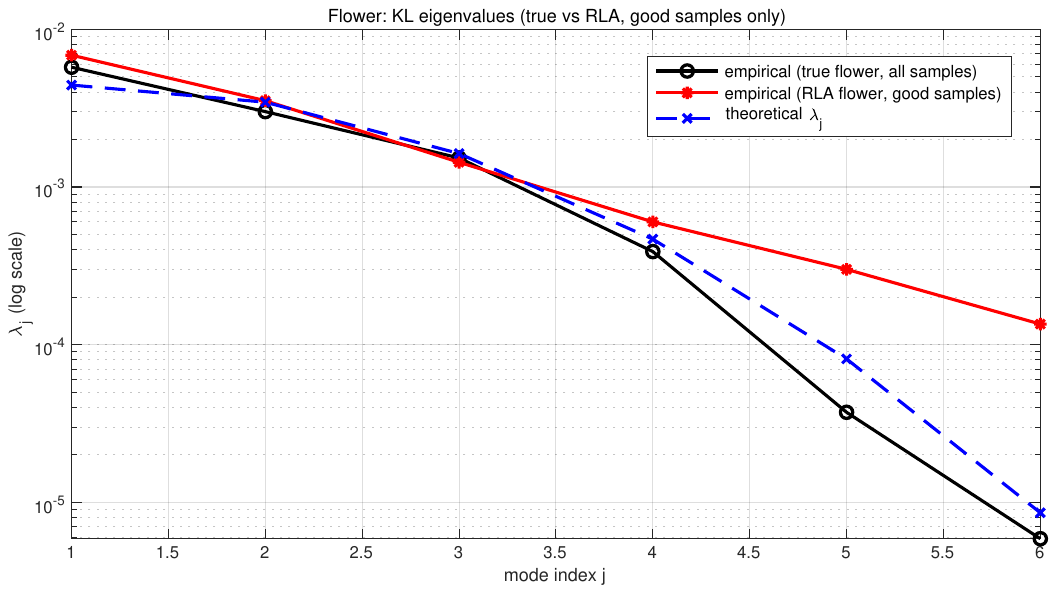}
        \caption{\small  $\lambda_j$  from 60 samples, $k_{n_K}=8$ } 
        \label{fig:goodfinal100}
    \end{subfigure}
     \begin{subfigure}{0.35\textwidth}
        \includegraphics[width=\linewidth]{ 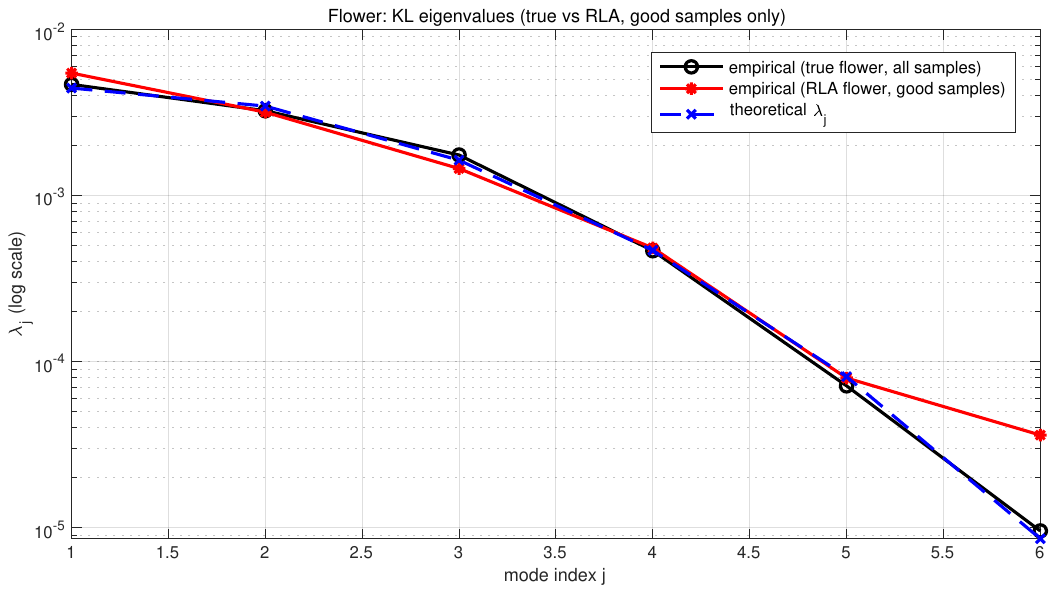}
        \caption{\small $\lambda_j$ from 20 samples, $k_{n_K}=20$}
        \label{goodfinalmoreK}
    \end{subfigure}%
    \caption{\small random flower numerical examples: blue dashed line: theoretical $\lambda_j$ values; black solid line: eigenvalues of the discrete covariance matrix from the true stochastic fluctuations; red solid line: eigenvalues of the reconstructed discrete covariance matrix from the algorithm. \subref{fig:finalcir}: reconstructed $\lambda_j$ from 14 selected samples among 20 inversions showed in \autoref{pic:allflowsamp}. \subref{fig:finalancir}: reconstructed $\lambda_j$ from 60 selected samples among 100 inversions under the same experimental setting as \subref{goodfinal1}. \subref{goodfinalmoreK}:  reconstructed $\lambda_j$ from 20 selected samples among 60 inversions under the setting: maximum wavenumber $k_{n_K}=20$, frequency index $N_r=14$, number of discrete iteration truncation terms $n_r=2N_r+1=29$ and regularization parameter $\alpha=0.8$.}
    \label{fig:cirrepair}
    \end{figure}

The reconstruction error appeared in high-frequency eigenvalues  in \autoref{goodfinal1} stems primarily from the insufficient capture of  high-frequency details in the first-stage geometric inversion rather than from high-frequency noise contamination due to stochastic fluctuations. To illustrate this point,  we repeat the experiment with 100 independent realizations under the same settings as  in \autoref{goodfinal1} and apply the same automated QC screening described above to identify a subset of reliable reconstructions. Ultimately, we retain 60 samples with small continuation updates $\&$ small final objective values (i.e., small $\eta_s^{(\max)}$ and $G^{(s)}_{\mathrm{final}}$ under the chosen quantile thresholds) and  recompute the empirical KL spectrum through these selected samples. The resulting eigenvalue comparison is shown in \autoref{fig:goodfinal100}. We observe that the discrepancy in the high-frequency range does not decrease appreciably, confirming our earlier assertion that the dominant source of error is the limited resolution of the first-stage geometric reconstruction at high frequencies rather than sample-to-sample stochastic noise.


In order to enhance the accuracy of the first-stage geometric inversion and capture more high-frequency details of the flower-shaped scatterer, we increase the maximum wavenumber $k_{n_K}$ from 8 to 20 and raised the discrete truncation iteration parameter $p$ from the original $2\times10+1=21$ terms to $2\times14+1=29$ terms. The corresponding parameter $\alpha$ is set to 0.8. We tested a total of 60 samples, among which 20 samples were justified by  QC screening as reliable reconstructions. A comparison of the eigenvalues computed from these 20 samples is presented in \autoref{goodfinalmoreK}. It can be clearly observed that the estimated eigenvalues converge toward their true values as the accuracy of geometric inversion improves. Among the first six eigenvalues, the first five are successfully reconstructed with high fidelity, only the smallest eigenvalue (the sixth one), which is on the order of magnitude between $10^{-4}\sim10^{-5}$ still exhibits a  slight deviation. With more accurate eigenvalue estimates, the precision in estimating the corresponding parameters $\sigma$ and $\ell$ is consequently improved, as presented in  \autoref{tab:flower_val1}. This result suggests that the proposed  inversion framework does not require a large sample size: in practice, about 20–30 samples are often sufficient to recover the covariance parameters with high accuracy. Such efficiency, however, hinges on  stage 1 reconstructions with sufficient fidelity and resolution to reliably capture the scatterer’s fine-scale features.  This observation naturally points to two  directions for the future work: developing more effective shape inversion techniques that improve high-frequency geometric fidelity and exploring alternative stochastic modeling and reconstruction frameworks for uncertainty quantification beyond the present Gaussian/KL setting. We leave these directions to the subsequent investigations.




\begin{table}[htbp]
\centering
\begin{tabular}{c|cccc}   
\hline
 & True (Preset)  & Reference  & \bf{Estimated (4 modes)} & \bf{Estimated (6 modes)}  \\
\hline
$\sigma$  & 0.05 &0.0503 & \bf{0.0506} &  \bf{0.0506} \\
$\ell$  & 1  &  1.0002 & \bf{1.0157} & \bf{1.0157}\\
\hline
\end{tabular}
\caption{inversion of $\sigma$ and $\ell$ with different modal configurations under $k_{n_K}=20$.}
\label{tab:flower_val1}
\end{table}

\begin{remark}
    \label{remark2}
    For samples exhibiting mild deviations (like $\#16, \#17,\#20$ in \autoref{pic:allflowsamp}),  these slight discrepancies primarily stem from the well–known  sensitivity of the inverse obstacle scattering problems with a single  incident direction, especially when the obstacle geometry is relatively complex.  Since this study employs a single fixed incident direction $d =(-1, 0)$ as stated earlier, the shape reconstruction quality is different between the illuminated (or front) side and the shadowed (or back) side, as evidenced by \autoref{pic:allcirsamp} and \autoref{pic:allflowsamp}. The recovery  on the illuminated side of the scatterer is slightly more accurate than that on the shadowed  side. Despite this, the overall shape is well-recovered within acceptable error bounds for most samples and, more importantly, for simpler geometries such as the circle and pear, these shadow-side inaccuracies  do not prevent accurate inversion of KL eigenvalues and the associated statistical parameters. 
    
    To further enhance the reconstruction accuracy, particularly for those scatterers with intricate geometry shapes, two natural extensions can be 
considered.  The first is to employ multi-angle illuminations which will in turn enrich the available scattered–field data and thereby improve the overall stability and resolution of the reconstruction, in particular on the shadowed side. This scenario can be further pursued  in future work. The second one, as noted by Sini et al. in \cite{sini2012inverse}, involves introducing a known reference scatterer (e.g., a circle of prescribed geometry) together  with the unknown obstacle, leading to a combined scattering configuration. This auxiliary scatterer acts as an intermediary whose scattered field helps compensate for the lack of information for the shadowed part of the unknown obstacle. However, the forward problem for such a coupled system requires solving a large boundary integral equation for the density function $\phi$ and the resulting system is typically more ill–conditioned 
due to multiple scattering interactions and geometric complexity. Resolving this numerical challenge constitutes another interesting and promising direction for future investigation.
\end{remark}
 

\section{Concluding remarks}
\label{sec:con}
In this work we proposed and analyzed a two-stage framework for inverse scattering by randomly perturbed obstacles, in which a multi-frequency shape reconstruction is first carried out for each realization and is then followed by a statistical inversion that estimates the covariance structure of the underlying geometric fluctuations. A central modeling ingredient is a covariance model on the angular domain that is both physically consistent (correlations decay with the minimal angular separation) and mathematically well-defined on the $2\pi$-periodic setting. This construction yields a tractable spectral characterization: the associated covariance operator is diagonalized by Fourier modes, leading to explicit formulas for the KL eigenvalues and enabling the recovery of covariance parameters such as $\sigma$ and $\ell$ from finitely many samples. On the theoretical side, we discuss the uniqueness and convergence aspects of the underlying inverse reconstruction problem, analyze how first-stage shape errors affect the second-stage KL eigenvalues and the recovered covariance parameters. Numerical experiments demonstrate that the proposed pipeline captures the dominant KL modes and produces stable parameter estimates across different geometries, while higher-order modes are more sensitive to geometric complexity and to the limited high-frequency resolution of the first-stage inversion. From an analysis perspective, it would be valuable to further integrate first-stage convergence results for multi-frequency shape reconstruction into the second-stage statistical guarantees, especially under the intrinsic ill-posed settings  of inverse scattering. From a modeling standpoint, extending the covariance class beyond isotropic, stationary kernels on the angular domain while preserving physical interpretability and spectral tractability would broaden applicability. On the computational side, developing adaptive frequency continuation and uncertainty-aware regularization strategies may improve the recovery of high-frequency KL modes and reduce the need for post hoc screening. We expect these directions to further strengthen the practical impact of the proposed two-stage stochastic inversion framework.



\appendix\label{appendix}
\section{\textbf{Proof of \autoref{allpo}}}\label{appen:Gaussexi}
\begin{proof}
    \ref{stmt:i}$\Longrightarrow$\ref{stmt:ii}: This is immediate from the definition of positive semi-definiteness in \eqref{pd}.

\ref{stmt:ii}$\Longrightarrow$\ref{stmt:iii}: Fix $ N_\theta\ge 1$ and let $\theta_m=2\pi m/N_\theta$. The matrix $K^{(N_\theta)}$ with entries
\[
K^{(N_\theta)}_{mn}=C(\theta_m-\theta_n)
\]
is circulant, hence it is diagonalized by the discrete Fourier vectors, and its eigenvalues are 
\[
\mu^{(N_\theta)}_j=\sum_{m=0}^{N_\theta-1} C(\theta_m)\,e^{-ij\theta_m},\qquad j=0,\dots,N_\theta-1.
\]
By (2), $K^{(N_\theta)}\succeq 0$ is positive semi-definite, so $\mu^{(N_\theta)}_j\ge 0$ for all $j$.
For each fixed $j\ge 0$, dividing by $N_\theta$ and letting $N_\theta\to\infty$, the above sum is a Riemann sum, hence
\[
\frac{1}{N_\theta}\mu^{(N_\theta)}_j \longrightarrow \frac{1}{2\pi}\int_0^{2\pi} C(t)e^{-ijt}\,dt=:c_j .
\]
Therefore $c_j\ge 0$. Since $C(t)$ is real and even, $c_j\in\mathbb{R}$ and
\[
c_0=a_0,\qquad c_j=\frac{a_j}{2}\ \ (j\ge 1),
\]
where $a_j$ are the cosine-series coefficients in \ref{Fourier}–\ref{coeff}. Thus $a_j\ge 0$ for all $j\ge 0$.
    \ref{stmt:iii}$\Longrightarrow$\ref{stmt:i}: Given $C(t) = a_0 + \sum_{j=1}^{\infty} a_j \cos(jt), \  a_j \geq 0, \  \sum_{j \geq 0} |a_j| < \infty$, we aim to prove for any finite set of points $\{\theta\}_{i=1}^n$ and any real coefficients $\{b_i\}_{i=1}^n$, the following inequality holds:\[
    S:= \sum_{i,k=1}^{n} b_i b_k \, C(\theta_i - \theta_k) \geq 0.
    \]
    Substitute $S$ into the cosine expansion of $C(t)$ yields:
    \[S  =\sum_{i, k} b_i b_k\left(a_0+\sum_{j=1}^{\infty} a_j \cos \left(j\left(\theta_i-\theta_k\right)\right)\right) 
 =a_0 \sum_{i, k} b_i b_k+\sum_{j=1}^{\infty} a_j \sum_{i, k} b_i b_k \cos \left(j\left(\theta_i-\theta_k\right)\right) .
\]
    Since $\sum_{i,k} b_i b_k = \left( \sum_i b_i \right)^2$,  the first term simplifies to  $a_0 \left( \sum_{i=1}^{n} b_i \right)^2$. For the second term, recall $\cos x = \frac{e^{ix} + e^{-ix}}{2} = \Re(e^{ix})$ where $\Re$ denotes the real part. This identity allows us to write $\cos(j(\theta_i - \theta_k)) = \Re\left(e^{ij(\theta_i - \theta_k)}\right)$, thus we have
    \[T_j := \sum_{i,k} b_i b_k \cos(j(\theta_i - \theta_k))
    = \sum_{i,k} b_i b_k \Re\left(e^{ij(\theta_i - \theta_k)}\right) 
    = \Re\left( \sum_{i,k} b_i b_k e^{ij(\theta_i - \theta_k)} \right).\]
Separate the exponential terms: $e^{ij(\theta_i - \theta_k)} = e^{ij\theta_i} e^{-ij\theta_k}$, then 
\[\sum_{i,k} b_i b_k e^{ij(\theta_i - \theta_k)} 
= \sum_{i,k} b_i b_k e^{ij\theta_i} e^{-ij\theta_k} \\
= \left( \sum_i b_i e^{ij\theta_i} \right) \left( \sum_k b_k e^{-ij\theta_k} \right).\]
    Denote $z_j := \sum_{i=1}^{n} b_i e^{ij\theta_i}$, With this notation, the preceding expression becomes $z_j\bar{z}_j$. Consequently, we have $  T_j = \Re(z_j \overline{z_j}) = \Re(|z_j|^2) = |z_j|^2$.
    Thus, we finally obtain:
    \[
    \sum_{i,k} b_i b_k \cos(j(\theta_i - \theta_k)) = \left| \sum_{i=1}^{n} b_i e^{ij\theta_i} \right|^2.
    \]
    Plugging the result into $S$ gives:
    \[S = a_0 \left( \sum_i b_i \right)^2 + \sum_{j=1}^{\infty} a_j \left| \sum_{i=1}^{n} b_i e^{ij\theta_i} \right|^2\geq0.\]
   Therefore, $C(t)$ is positive semi-definite.  \qedhere 
   
\end{proof}

\section{\textbf{Derivation of the relation between $\widetilde{a}_j$, $\lambda_j$ in \eqref{guanxi1} and \eqref{coe}}}\label{relationn}
In this part we derive the relationship between the Fourier expansion coefficients $c_n$ of a periodic (even) real stationary covariance function ${C}_{\delta r}(t)$ in \eqref{acc}, the even function expansion coefficients $\widetilde{a}_n$ and  the eigenvalues $\lambda_n$ of the operator $\mathcal{C}$ defined via ${C}_{\delta r}(t)$.

By definition, ${C}_{\delta r}(t)$ admits a Fourier expansion:
\begin{equation}\label{acc}
 {C}_{\delta r}(t) = \sum_{n \in \mathbb{Z}} c_n e^{i n t},\qquad  c_n = \frac{1}{2\pi} \int_0^{2\pi} {C}_{\delta r}(t) e^{-i n t} \, dt.    
\end{equation}
Since $C(t)$ is real and even, it can be deduced that
\[
c_{-n} = \overline{c_n} = c_n, \quad c_n \in \mathbb{R}, \quad \forall n \in \mathbb{Z}.
\]
Thus
\[
\begin{aligned}
{C}_{\delta r}(t) =\sum_{n \in \mathbb{Z}} c_n e^{i n t}=c_0+\sum_{j=1}^{\infty} c_j e^{i j t}+\sum_{j=1}^{\infty} c_{-j} e^{-i j t} 
 &=c_0+\sum_{j=1}^{\infty} c_j\left(e^{i j t}+e^{-i j t}\right) \\
&=c_0+2 \sum_{j=1}^{\infty} c_j \cos (j t) .
\end{aligned}
\]
If we direct expand ${C}_{\delta r}(t)$ as $ {C}_{\delta r}(t) = \widetilde{a}_0 + \sum_{j=1}^{\infty} \widetilde{a}_j \cos(j t), \  t \in [0, 2\pi],$ then we have
\[
\widetilde{a}_{0}=c_{0},\qquad \widetilde{a}_{j}=2c_{j}\quad(j\geq 1).
\]
For $\lambda_0$, we know it satisfies $\mathcal{C}\varphi_0=\lambda_0\varphi_0$. Easy to show that $\lambda_0=\int_0^{2\pi}{C}_{\delta r}(t)dt$ since $\varphi_0$ is a constant function, moreover $\widetilde{a}_0=\frac{1}{2\pi}\int_{0}^{2\pi} {C}_{\delta r}(t)dt$, hence 
    $\lambda_0=2\pi \widetilde{a}_0=2\pi c_0$.
For $\lambda_j$, we focus on the cosine mode $\varphi_{j, c}(\theta)=\frac{1}{\sqrt{\pi}} \cos (j \theta)$, the same derivation can be applied to the sine mode. By definition
\[
 (\mathcal{C} \phi_{j,c})(\theta) = \int_0^{2\pi} {C}_{\delta r}(|\theta - \theta'|) \frac{1}{\sqrt{\pi}} \cos(j \theta') \, d\theta'\xlongequal{\tau=\theta-\theta^\prime}  \frac{1}{\sqrt{\pi}} \int_0^{2\pi} {C}_{\delta r}(\tau) \cos\big(j(\theta - \tau)\big) \, d\tau.
\]
With the identity $  \cos(j(\theta - \tau)) = \cos(j\theta) \cos(j\tau) + \sin(j\theta) \sin(j\tau)$, we have
\[
\left(\mathcal{C} \phi_{j, c}\right)(\theta)=\frac{\cos (j \theta)}{\sqrt{\pi}} \int_0^{2 \pi} {C}_{\delta r}(\tau) \cos (j \tau) d \tau+\frac{\sin (j \theta)}{\sqrt{\pi}} \int_0^{2 \pi} {C}_{\delta r}(\tau) \sin (j \tau) d \tau.
\]
${C}_{\delta r}(\tau)$ is even and $\sin(j\tau)$ is odd, therefore $ \int_0^{2\pi} {C}_{\delta r}(\tau) \sin(j\tau) \, d\tau = 0$, which means
\[
(\mathcal{C} \phi_{j,c})(\theta) = \frac{\cos(j\theta)}{\sqrt{\pi}} \int_0^{2\pi} {C}_{\delta r}(\tau) \cos(j\tau) \, d\tau.
\]
Thus $\lambda_j = \int_0^{2\pi} {C}_{\delta r}(\tau) \cos(j\tau) \, d\tau, \  j \geq 1,$ yields\[\lambda_j=\pi \widetilde{a}_j=2\pi c_j,\quad j\geq 1.\]

\section{\textbf{Derivation of the Approximate Relation \eqref{relation}}}
\label{appen:relation}
In  a one-dimensional periodic setting, we focus on a single period cell $[0,2\pi)$. A (periodically) stationary covariance function means that the kernel  depends only on the relative displacement between two points and this relative displacement is meaningful only modulo $2\pi$: for any integer $m$, the displacements $x-y$ and $x-y+2\pi m$ are equivalent and represent the same periodic translation. On a $2\pi$-periodic domain, relative displacements are understood modulo $2\pi$, so one may choose any length-$2\pi$ interval to represent them.  For  convenience, we choose the symmetric representative interval $[-\pi,\pi)$ which corresponds to the shortest periodic displacement.  Assume the Gaussian   covariance kernel $C_f$ is periodically stationary, i.e., $C_f(x,y)=C_f(x-y)$ where $x-y:= x-y\ \text{mod}\  2\pi$ denotes the periodic relative displacement. In $[-\pi,\pi)$, $C_f$ takes the regular Gaussian form
\[
C_f(t)=\sigma^2 \exp\!\left(-\frac{t^2}{\ell^2}\right), \qquad 0<\ell\ll 2\pi.
\]
Then the associated covariance operator $\mathcal{C}_f$ is defined by:
\begin{equation}\label{axi}
    (\mathcal{C}_f \varphi)(x) = \int_0^{2\pi} C_f(x - y) \varphi(y) \, dy.
\end{equation}
\noindent  Accordingly, introducing the periodic displacement variable $t=y-x$ represented in $[-\pi,\pi)$, the operator in \eqref{axi} can be written as
\begin{equation}\label{axi2}
     (\mathcal{C}_f \varphi)(x) = \int_{-\pi}^{\pi} C_f(-t) \varphi(x+t) \, dt\xlongequal{C_f(t) =\ C_f(-t) \ \text{on}\  [-\pi,\pi)}\int_{-\pi}^{\pi} C_f(t) \varphi(x+t) \, dt.
\end{equation}
From \cite{xiu2010numerical} we know that the eigenfunctions towards the operator $\mathcal{C}_f$ is the Fourier basis functions: $\phi_j(x) = c e^{i j x}$,  $j \in \mathbb{Z}$
where $c$ is the orthonormalization constant for $\phi_j(x)$. Substituting $\phi_j$ into the operator $\mathcal{C}_f$ \eqref{axi2} yields
\[ 
 (\mathcal{C}_f\phi_j)(x)
  = \sigma^2\int_{-\pi}^{\pi} e^{-t^2/\ell^2} ce^{ij(x+t)}\,dt=
   c\sigma^2e^{ijx}\int_{-\pi}^{\pi} e^{-t^2/\ell^2} e^{ijt}\,dt.
\]
Therefore $\phi_j$ is an eigenfunction with eigenvalue
\begin{equation}\label{eq:lambda-j-periodic}
\lambda_j^{\mathrm{mod}}=\sigma^2\int_{-\pi}^{\pi} e^{-t^2/\ell^2}e^{ijt}\,dt .
\end{equation}
In particular, $\lambda_j$ depends only on $j$ and is independent of $x$. Based on the  assumption that the correlation length $\ell$ is much smaller than the
period, i.e.\ $\ell\ll 2\pi$, the Gaussian factor
$e^{-t^2/\ell^2}$ is exponentially small for $|t|\ge\pi$.
Writing
 $ I_j^{(\mathrm{per})}
  := \int_{-\pi}^{\pi} e^{-t^2/\ell^2} e^{ijt}\,dt$,
 $ I_j^{(\mathbb R)}
  := \int_{-\infty}^{\infty} e^{-t^2/\ell^2} e^{ijt}\,dt$,
we have
\[
  I_j^{(\mathbb R)} - I_j^{(\mathrm{per})}
  = \int_{|t|\ge\pi} e^{-t^2/\ell^2} e^{ijt}\,dt,
\]
and thus, by the triangle inequality,
  $\bigl|I_j^{(\mathbb R)} - I_j^{(\mathrm{per})}\bigr|
  \le \int_{|t|\ge\pi} e^{-t^2/\ell^2}\,dt
  = 2\int_{\pi}^{\infty} e^{-t^2/\ell^2}\,dt$,
the Gaussian tail admits the standard bound
\[
  \int_{\pi}^{\infty} e^{-t^2/\ell^2}\,dt
  = \ell\int_{\pi/\ell}^{\infty} e^{-s^2}\,ds
  \le \frac{\ell}{2(\pi/\ell)} e^{-(\pi/\ell)^2}
  = \frac{\ell^2}{2\pi} e^{-\pi^2/\ell^2},
\]
so we obtain the estimate
\begin{equation}\label{eq:Ij-error}
  \bigl|I_j^{(\mathbb R)} - I_j^{(\mathrm{per})}\bigr|
  \le C_{\mathrm{asym}}\,\ell^2 e^{-\pi^2/\ell^2},
\end{equation}
for some constant $C_{\mathrm{asym}}>0$ independent of $j$ (in particular, $C_{\mathrm{asym}}=1/\pi$ works).
Therefore, using \eqref{eq:lambda-j-periodic} and \eqref{eq:Ij-error}, we have
$\lambda_j^{\mathrm{mod}}
  = \sigma^2 I_j^{(\mathrm{per})}
  = \sigma^2 I_j^{(\mathbb R)}
    + \mathcal{O}\!\bigl(\sigma^2\ell^2 e^{-\pi^2/\ell^2}\bigr)$,
The  integral $I_j^{(\mathbb R)}$ can be evaluated explicitly:
\[
  I_j^{(\mathbb R)}
  = \int_{-\infty}^{\infty} e^{-t^2/\ell^2} e^{ijt}\,dt
  = \sqrt{\pi}\,\ell\,e^{-\ell^2 j^2/4}.
\]
Consequently we arrive at the asymptotic formula
\begin{equation}\label{eq:lambda-asymptotic}
  \lambda_j^{\mathrm{mod}}
  = \sqrt{\pi}\,\sigma^2\ell\,e^{-\ell^2 j^2/4}
    + \mathcal{O}\!\bigl(\sigma^2\ell^2 e^{-\pi^2/\ell^2}\bigr).
\end{equation}
In other words, for correlation lengths much smaller than the period
$2\pi$, the eigenvalues of the periodic covariance operator are well
approximated by
\[
  \lambda_j^{\mathrm{mod}} \approx \sqrt{\pi}\,\sigma^2\ell\,e^{-\ell^2 j^2/4},
\]
with an exponentially small error of order
$\mathcal{O}(\sigma^2\ell^2 e^{-\pi^2/\ell^2})$, which is the relation in \eqref{relation}.

\section{\textbf{Proof of Proposition \autoref{relatio}}}
\label{appen:Justification}
\begin{proof}
    Since $\Theta=[0,2\pi)\simeq\mathbb R/(2\pi\mathbb Z)$, our angular setting can also be viewed as a one-dimensional periodic domain analogous to the structure in  \autoref{appen:relation}. With the periodically stationary covariance function $C_{\mathrm{geod}}$, we have:
    \[
C_{\mathrm{geod}}(x,y)  \xlongequal{\text{stationary}}   C_{\mathrm{geod}}(x-y)\xlongequal{x-y=t}C_{\mathrm{geod}}(t).
    \] where $x-y$ is understood as the relative angular displacement modulo $2\pi$. Throughout this section, we adopt this convention.
Its corresponding operator is
    \[
    (\mathcal{C}_{\mathrm{geod}} f)(x) = \int_0^{2\pi} C_{\mathrm{geod}}(x - y ) f(y) \, dy.
    \]
    The Fourier basis functions continue to serve as the eigenfunctions for this operator
    \[
\mathcal{C}_{\mathrm{geod}}\phi_j=\lambda_j\phi_j, \qquad \phi_j(x) = c e^{i j x}, \quad j \in \mathbb{Z}.
\]
where $c$ is the orthonormalization constant for $\phi_j(x)$. 
Hence 
\begin{align*}
  (\mathcal{C}_{\mathrm{geod}}\phi_j)(x)
  &= \int_{0}^{2\pi} C_{\mathrm{geod}}(x-y)\,\phi_j(y)\,dy 
  = \int_{0}^{2\pi} C_{\mathrm{geod}}(x-y)\,
      ce^{ijy}\,dy  \\
  &= e^{ijx}c
     \int_{0}^{2\pi} C_{\mathrm{geod}}(x-y)\,e^{ij(y-x)}\,dy.
\end{align*}
therefore
\begin{equation}\label{eq:lambda-complex}
  \lambda_j
  = \int_{0}^{2\pi} C_{\mathrm{geod}}(x-y)\,e^{ij(y-x)}\,dy.
\end{equation}
View $C_{\mathrm{geod}}$ as a function on $\Theta$, i.e., identify $C_{\mathrm{geod}}(t)$ with its $2\pi$-periodic extension to $\mathbb R$. Since $j\in\mathbb Z$, the factor $e^{ijt}$ is also $2\pi$-periodic, hence the product $t\mapsto C_{\mathrm{geod}}(t)e^{ijt}$ is $2\pi$-periodic. With the change of variables $t = y-x$, \eqref{eq:lambda-complex} turns into
\begin{equation}\label{eq:lambda-fourier}
  \lambda_j
  = \int_{0}^{2\pi} C_{\mathrm{geod}}(t)\,e^{ijt}\,dt=\int_{-\pi}^{\pi} C_{\mathrm{geod}}(t)\,e^{ijt}\,dt=\int_{-\pi}^{\pi} C_{f}(t)\,e^{ijt}\,dt.
\end{equation}
which is the exact result showed in \eqref{eq:lambda-j-periodic}. Following the same process as described in \autoref{appen:relation} will yield the approximation relation. 

By construction, $C_{\delta r}$ is obtained from $C_{\mathrm{geod}}$ by adjusting  its Fourier/cosine coefficients so that all retained coefficients are nonnegative. Since both kernels are periodically stationary on $\Theta$, the associated covariance operators are convolution operators and are diagonalized by the same Fourier basis $\{\phi_j\}_{j\in\mathbb Z}$. In particular, for each $j=0,\ldots,N_{\mathrm{KL}}$, the discrepancy
$\lambda_j(C_{\delta r})-\lambda_j(C_{\mathrm{geod}})$ is exactly the discrepancy between the corresponding coefficients at index j. Therefore, if the correction is applied only to indices $j\ge N_{\mathrm{KL}}$, then the first $N_{\mathrm{KL}}$ eigenvalues are unchanged; otherwise, the perturbation of the retained eigenvalues is controlled directly by the perturbation of the retained coefficients. This shows that the asymptotic relation derived in \autoref{appen:relation} carries over to $C_{\delta r}$ for the modes used in the KL truncation.  \qedhere 

\end{proof}


\section{\textbf{Proof of  \autoref{convergence}}}\label{appen:proofconvergence}
\begin{proof}
    By definition, $\widehat{x}^{(s)} = x^{(s)} + \widetilde{e}^{(s)}$. Substitute it into $\widehat{\Sigma}_N^{\mathrm{rec}}$ yields:
    \[
\begin{aligned}
\widehat{\Sigma}_{N}^{\mathrm{rec}} & =\frac{1}{N-1}\sum_{s=1}^N(x^{(s)}+\widetilde{e}^{(s)})(x^{(s)}+\widetilde{e}^{(s)})^\top \\
 & =\frac{1}{N-1}\sum_{s=1}^N\left[x^{(s)}x^{(s)\top}+x^{(s)}\widetilde{e}^{(s)\top}+\widetilde{e}^{(s)}x^{(s)\top}+\widetilde{e}^{(s)}\widetilde{e}^{(s)\top}\right].
\end{aligned}
\]
Subtract $\widehat{\Sigma}_N=x^{(s)}x^{(s)\top}$ from both sides, we get
\[\widehat{\Sigma}_N^{\mathrm{rec}}-\widehat{\Sigma}_N=\frac{1}{N-1}\left(\sum_{s=1}^Nx^{(s)}\widetilde{e}^{(s)\top}+\sum_{s=1}^N\widetilde{e}^{(s)}x^{(s)\top}+\sum_{s=1}^N\widetilde{e}^{(s)}\widetilde{e}^{(s)\top}\right):=\frac{1}{N-1}\Big(A+B+C\Big),\]
here we introduce the notation $A:=\sum_{s=1}^Nx^{(s)}\widetilde{e}^{(s)\top}$, $B:=\sum_{s=1}^N\widetilde{e}^{(s)}x^{(s)\top}=A^{\top}$, $C:=\sum_{s=1}^N\widetilde{e}^{(s)}\widetilde{e}^{(s)\top}$ for brevity.
For  a matrix $M$, we know $\|M\|_2\leq\|M\|_{F}$, thus 
\[
\left\|\widehat{\Sigma}_N^\mathrm{rec}-\widehat{\Sigma}_N\right\|_2\leq\frac{1}{N-1}{\left(\|A\|_F+\|B\|_F+\|C\|_F\right)}.
\]
For $A$, 
\[
\|A\|_F=\left\|\sum_{s=1}^Nx^{(s)}\widetilde{e}^{(s)\top}\right\|_F\leq\sum_{s=1}^N\|x^{(s)}\widetilde{e}^{(s)\top}\|_F=\sum_{s=1}^N\|x^{(s)}\|_2\|\widetilde{e}^{(s)}\|_2.
\]
By Cauchy–Schwarz,
\[
\sum_{s=1}^N\|x^{(s)}\|_2\|\widetilde{e}^{(s)}\|_2\leq\left(\sum_{s=1}^N\|x^{(s)}\|_2^2\right)^{1/2}\left(\sum_{s=1}^N\|\widetilde{e}^{(s)}\|_2^2\right)^{1/2}.
\]
According to \eqref{assumption1}, $\sum_{s=1}^N\|x^{(s)}\|_2^2\leq NM_x,\  \sum_{s=1}^N\|\widetilde{e}^{(s)}\|_2^2\leq N\varepsilon^2$, then we derive that 
\[
\|A\|_F\leq\sqrt{NM_x}\sqrt{N\varepsilon^2}=N\sqrt{M_x}\mathrm{~}\varepsilon.
\]
For $B=\sum_{s=1}^N\widetilde{e}^{(s)}x^{(s)\top}$, applying the same computation  will lead to 
\[
\|B\|_F\leq N\sqrt{M_x}\mathrm{~}\varepsilon.
\]
As for $C$, we have the following estimate:
\[
\|C\|_F=\left\|\sum_{s=1}^N\widetilde{e}^{(s)}\widetilde{e}^{(s)\top}\right\|_F\leq\sum_{s=1}^N\|\widetilde{e}^{(s)}\widetilde{e}^{(s)\top}\|_F=\sum_{s=1}^N\|\widetilde{e}^{(s)}\|_2^2\leq N\varepsilon^2.
\]
Hence
\[
\begin{aligned}
\left\|\widehat{\Sigma}_{N}^{\mathrm{rec}}-\widehat{\Sigma}_{N}\right\|_{2}  \leq\frac{1}{N-1}\left(\|A\|_F+\|B\|_F+\|C\|_F\right) 
  &\leq\frac{1}{N-1}{\left(N\sqrt{M_x}\varepsilon+N\sqrt{M_x}\varepsilon+N\varepsilon^2\right)} \\
 & =\frac{N}{N-1}{\left(2\sqrt{M_x}\left.\varepsilon+\varepsilon^2\right).\right.}
\end{aligned}
\]
Finally, by a corollary to Weyl's inequality which is called Spectral Stability \cite{franklin2000matrix}: Let 
$A_1$, $B_1$  be Hermitian on inner product space  with dimension $n$, then
\[
|\mu_j(A_1+B_1)-\mu_j(A_1)|\leq\|B_1\|_2,\quad j=1,\ldots,n.
\]
Take $A_1=\widehat{\Sigma}_{N}$ and $B_1=\widehat{\Sigma}_{N}^{\mathrm{rec}}-\widehat{\Sigma}_{N}$, then we get
\[
\left|\widehat{\mu}_j^{\mathrm{rec}}-\widehat{\mu}_j\right|=\left|\mu_j(\widehat{\Sigma}_N^{\mathrm{rec}})-\mu_j(\widehat{\Sigma}_N)\right|=\left|\mu_j(A_1+B_1)-\mu_j(A_1)\right|\leq\|B_1\|_2=\left\|\widehat{\Sigma}_N^{\mathrm{rec}}-\widehat{\Sigma}_N\right\|_2. \qedhere 
\]
\end{proof}


\section{Proof of \autoref{bless5}}
\label{proofofbless5}
\begin{proof}
From \autoref{convergence} and Proposition \autoref{loglam}, it follows that there exists a constant $C^\prime$ such that for $j=0,\cdots,N_{\mathrm{KL}}$, \eqref{bless} and \eqref{bless2} hold where $x_j$ are given design points (e.g., $x_j=j^2$), and $A^\ast$, $B^\ast$ are the coefficients corresponding to the true parameters $(\sigma^\ast,\ell^\ast)$.
    Define \begin{equation}
    X=
\begin{pmatrix}
1 & -x_0 \\
1 & -x_1 \\
\vdots & \vdots \\
1 & -x_{N_{\mathrm{KL}}-1}
\end{pmatrix}\in\mathbb{R}^{N_\mathrm{KL}\times2},\quad\varrho=
\begin{pmatrix}
A \\
B
\end{pmatrix},\quad y=(y_0,\ldots,y_{N_\mathrm{KL}-1})^\top,\quad\eta=(\eta_0,\ldots,\eta_{N_\mathrm{KL}-1})^\top.
\end{equation}
 Let
\[
\varrho^{\ast}:=
\begin{pmatrix}
A^{\ast} \\
B^{\ast}
\end{pmatrix},\quad\widehat{\varrho}:=
\begin{pmatrix}
A_{\mathrm{fit}} \\
B_{\mathrm{fit}}
\end{pmatrix}.
\]
Then the relation \eqref{bless}  can be written in vector form as: \[
y=X\varrho^\ast+\eta,
\]
The least squares problem \eqref{fitting} and \eqref{AB} can be stated as to find $\widehat{\varrho}$ s.t.
\[
\widehat{\varrho}=\arg\min_{\varrho\in\mathbb{R}^2}\|y-X\varrho\|_2^2.
\]
Since $N_\mathrm{KL}\geq 2$ and $\{x_j\}$ are not all identical, thus the matrix $X$ has full column rank, therefore $X^\top X$ is invertible and the least squares solution satisfies the normal equation
\[
X^\top X\widehat{\varrho}=X^\top y.
\]
Substitute $y=X\varrho^\ast+\eta$ into the above equation yields
\[
X^\top y=X^{\top}X\widehat{\varrho}=X^{\top}(X\varrho^\ast+\eta)=X^{\top}X\varrho^{\ast}+X^{\top}\eta,
\]
and hence
\[
\widehat{\varrho}-\varrho^\ast=(X^\top X)^{-1}X^{\top}\eta.
\]
Taking the 2-norm of $\eta$ and using $\|\eta\|_\infty\leq C^\prime E_\lambda$, we obtain
\[
\|\eta\|_{2}\leq\sqrt{N_{\mathrm{KL}}}\left\|\eta\right\|_{\infty}\leq\sqrt{N_{\mathrm{KL}}}C^{\prime}E_{\lambda},
\]
and therefore
\[
\|\widehat{\varrho}-\varrho^\ast\|_2\leq\|(X^\top X)^{-1}X^\top\|_2\left\|\eta\right\|_2\leq\sqrt{N_{\mathrm{KL}}}\left\|(X^\top X)^{-1}X^\top\right\|_2C^{\prime}E_\lambda.
\]
In $\mathbb{R}^2$  the inequality $|u_1|+|u_2|\leq\sqrt{2}\left\|(u_1,u_2)\right\|_2$ holds, denote 
\[
\widetilde{C}_{1}:=\sqrt{N_{\mathrm{KL}}}\left\|(X^{\top}X)^{-1}X^{\top}\right\|_{2}C^{\prime},
\]
which depends only on $N_{\mathrm{KL}}$ and $\{x_j\}$, hence
\[
   |A_{\mathrm{fit}}-A^\ast|+ |B_{\mathrm{fit}}-B^\ast|\leq \sqrt{2}\|\widehat{\varrho}-\varrho^\ast\|\leq \sqrt{2}\widetilde{C}_1E_\lambda= C_1E_\lambda,
\]
where $C_1=C_1(N_{\mathrm{KL}},\{x_j\})>0$. For the second part, by definition \eqref{ell} and \eqref{sigma}, the covariance parameters and the regression coefficients satisfy the following explicit relation:
\begin{equation}\label{appp}    \ell=2\sqrt{B},\quad\sigma=\left(\frac{e^{A}}{\sqrt{\pi}\ell}\right)^{1/2}.
\end{equation}
Denote by $\mathcal{G}: (A,B) \mapsto (\sigma,\ell)$ in \eqref{appp} the mapping from $(A,B)$ to $(\sigma,\ell)$ which is $C^1$-smooth on the open set $\{B > 0\}$. Conversely, from the relations
\begin{equation}\label{pppa}    A=\log\left(\sqrt{\pi}\frac{\sigma^{2}}{\ell}\right),\quad B=\frac{\ell^{2}}{4}.
\end{equation}
we obtain the inverse mapping $\mathcal{G}^{-1}:(\sigma,\ell)\mapsto (A,B)$ in \eqref{pppa} which is also of class $C^1$ on $(0,\infty)^2$. Obviously, we have
\[
(A^\ast,B^\ast)=\mathcal{G}^{-1}(\sigma^\ast,\ell^\ast),\qquad(\sigma_{\mathrm{est}},\ell_{\mathrm{est}})=\mathcal{G}(A_{\mathrm{fit}},B_{\mathrm{fit}}).
\]
Under the assumptions of the \autoref{bless5}, the true parameters $(\sigma^\ast,\ell^\ast)$ lie in a compact set $I\subset(0,\infty)^2$. Since $\mathcal{G}^{-1}$ is continuous, $\mathcal{G}^{-1}(I)$ is a compact set contained in $\{B>0\}$ and contains $(A^\ast,B^\ast)$. On the other hand, we also assume that the pair $(A_{\mathrm{fit}},B_{\mathrm{fit}})$, computed via \eqref{fitting} and \eqref{AB}, is contained in some bounded set $I_{AB}\subset\{B>0\}$. We can then define $\widetilde{I}:=\mathcal{G}^{-1}(I) \cup I_{AB}\subset\{B>0\}$ which is a compact set in $\mathbb{R}^2$ and contains both $(A^\ast,B^\ast)$ and $(A_{\mathrm{fit}},B_{\mathrm{fit}})$. Since $\mathcal{G}$ is $C^1$ on $\{B > 0\}$, its Jacobian $D\mathcal{G}(A, B)$ is continuous and hence bounded on the compact set $\widetilde{I}$, i.e., there exists a constant $L:=\sup_{(A,B)\in\widetilde{I}}\|D\mathcal{G}(A,B)\|<+\infty$,  for any $(A, B), (A^\prime, B^\prime) \in \widetilde{I}$, we have
\[
\begin{vmatrix}
\mathcal{G}(A,B)-\mathcal{G}(A^{\prime},B^{\prime})
\end{vmatrix}\leq\mathrm{~}L
\begin{vmatrix}
(A,B)-(A^{\prime},B^{\prime})
\end{vmatrix},
\]
due to the Lipschitz property. Substituting $(A, B) = (A_{\mathrm{fit}}, B_{\mathrm{fit}})$ and $(A^\prime, B^\prime) = (A^{\ast}, B^{\ast})$ and expanding the vector norm into its coordinate form, we obtain
\[
\left|\sigma_{\mathrm{est}}-\sigma^{\ast}\right|+\left|\ell_{\mathrm{est}}-\ell^{\ast}\right|\mathrm{~\leq~}C_{2}{\big(\left|A_{\mathrm{fit}}-A^{\ast}\right|}+\big|B_{\mathrm{fit}}-B^{\ast}\big|),
\]
where $C_2 > 0$ depends only on the compact set $\widetilde{I}$ (and therefore only on the bounded regions mentioned in the theorem). Finally, combining the above with the estimate already established in the first part we get 
\[
     |\sigma_{\mathrm{est}}-\sigma^{\ast}|+|\ell_{\mathrm{est}}-\ell^{\ast}|\leq C_{2}{\left(|A_{\mathrm{fit}}-A^{\ast}|+|B_{\mathrm{fit}}-B^{\ast}|\right)}\leq C_1C_2E_\lambda.
\]
which is exactly formula \eqref{bless10}. This completes the proof. \qedhere

\end{proof}

\bibliographystyle{amsplain}

\begin{thebibliography}{99}


\bibitem{bao2001mathematical}
G. Bao, L. Cowsar, and W. Masters (eds.), \textit{Mathematical Modeling in Optical Science},
SIAM, Philadelphia, PA, 2001.



\bibitem{kuchment2013radon}
P. Kuchment, \textit{The Radon Transform and Medical Imaging}, SIAM, Philadelphia, PA, 2014.


\bibitem{mclean2000strongly}
W. C. H. McLean, \textit{Strongly Elliptic Systems and Boundary Integral Equations},
Cambridge University Press, Cambridge, 2000.


\bibitem{colton1998inverse}
D.~Colton and R.~Kress, \textit{Inverse Acoustic and Electromagnetic Scattering Theory},
4th ed., Applied Mathematical Sciences, vol.~93, Springer, Cham, Switzerland, 2019.


\bibitem{colton1986novel}
D. Colton and P. Monk, \textit{A novel method for solving the inverse scattering problem for time-harmonic acoustic waves in the resonance region II},
SIAM J. Appl. Math. \textbf{46} (1986), no.~3, 506--523.


\bibitem{colton1985novel}
D. Colton and P. Monk, \textit{A novel method for solving the inverse scattering problem for time-harmonic acoustic waves in the resonance region},
SIAM J. Appl. Math. \textbf{45} (1985), no.~6, 1039--1053.


\bibitem{yin2025physics}
Y. Yin and L. Yan, \textit{Physics-aware deep learning framework for the limited aperture inverse obstacle scattering problem},
SIAM J. Sci. Comput. \textbf{47} (2025), no.~2, C313--C342.


\bibitem{sini2012inverse}
M. Sini and N. T. Thanh, \textit{Inverse acoustic obstacle scattering problems using multifrequency measurements},
Inverse Probl. Imaging \textbf{6} (2012), no.~4, 749--773.


\bibitem{kress2003newton}
R. Kress, \textit{Newton's method for inverse obstacle scattering meets the method of least squares},
Inverse Problems \textbf{19} (2003), no.~6, S91--S104.



\bibitem{liu2019data}
X. Liu and J. Sun, \textit{Data recovery in inverse scattering: from limited-aperture to full-aperture},
J. Comput. Phys. \textbf{386} (2019), 350--364.


\bibitem{dou2022data}
F. Dou, X. Liu, S. Meng, and B. Zhang, \textit{Data completion algorithms and their applications in inverse acoustic scattering with limited-aperture backscattering data},
J. Comput. Phys. \textbf{469} (2022), 111550.


\bibitem{bao2015inverse}
G. Bao, P. Li, J. Lin, and F. Triki, \textit{Inverse scattering problems with multi-frequencies},
Inverse Problems \textbf{31} (2015), no.~9, 093001.


\bibitem{sullivan2015introduction}
T. J. Sullivan, \textit{Introduction to uncertainty quantification},
Texts in Applied Mathematics, vol.~63, Springer, Cham, 2015.


\bibitem{feng2018efficient}
X. Feng, J. Lin, and D. P. Nicholls, \textit{An efficient Monte Carlo-transformed field expansion method for electromagnetic wave scattering by random rough surfaces},
Commun. Comput. Phys. \textbf{23} (2018), no.~3, 685--705.


\bibitem{oksendal2013stochastic}
B. \O ksendal, \textit{Stochastic Differential Equations: An Introduction with Applications},
6th ed., Universitext, Springer, Heidelberg, 2013.


\bibitem{baudoin2012lecture5}
F. Baudoin, \textit{Lecture 5. The Daniell--Kolmogorov existence theorem},
online lecture notes, March 25, 2012, available at
\url{https://fabricebaudoin.wordpress.com/2012/03/25/lecture-5-the-daniell-kolmogorov-existence-theorem/}
(accessed December 4, 2025).



\bibitem{colton1983uniqueness}
 D. Colton and B. D. Sleeman, \textit{Uniqueness theorems for the inverse problem of acoustic scattering},
IMA J. Appl. Math. \textbf{31} (1983), no.~3, 253--259.


\bibitem{li2016inverse}
P. Li, Y. Wang, Z. Wang, and Y. Zhao, \textit{Inverse obstacle scattering for elastic waves},
Inverse Problems \textbf{32} (2016), no.~11, 115018.


\bibitem{kirsch1993domain}
A. Kirsch, \textit{The domain derivative and two applications in inverse scattering theory},
Inverse Problems \textbf{9} (1993), no.~1, 81--96.


\bibitem{gintides2005local}
D.~Gintides, \textit{Local uniqueness for the inverse scattering problem in acoustics via the Faber--Krahn inequality}, Inverse Problems \textbf{21} (2005), no.~4, 1195--1205.


\bibitem{chen1997inverse}
Y. Chen, \textit{Inverse scattering via Heisenberg's uncertainty principle},
Inverse Problems \textbf{13} (1997), no.~2, 253--282.


\bibitem{xiu2010numerical}
D. Xiu, \textit{Numerical Methods for Stochastic Computations: A Spectral Method Approach},
Princeton University Press, Princeton, NJ, 2010.


\bibitem{ramm1999multidimensional}
A. G. Ramm, \textit{Multidimensional Inverse Scattering Problems},
Pitman Monographs and Surveys in Pure and Applied Mathematics, \textbf{51},
Longman Scientific \& Technical, Harlow; copublished in the United States with
John Wiley \& Sons, Inc., New York, 1992.



\bibitem{cheng2004global}
J. Cheng and M. Yamamoto, \textit{Global uniqueness in the inverse acoustic scattering problem within polygonal obstacles},
Chinese Ann. Math. \textbf{25} (2004), no.~1, 1--6.



\bibitem{alessandrini2005determining}
G. Alessandrini and L. Rondi, \textit{Determining a sound-soft polyhedral scatterer by a single far-field measurement},
Proc. Amer. Math. Soc. \textbf{133} (2005), no.~6, 1685--1691.


\bibitem{xiu2007efficient}
D. Xiu and J. Shen, \textit{An efficient spectral method for acoustic scattering from rough surfaces},
Commun. Comput. Phys. \textbf{2} (2007), no.~1, 54--72. 

\bibitem{da2023gaussian}
N. Da Costa, C. Mostajeran, and J.-P. Ortega, \textit{The Gaussian kernel on the circle and spaces that admit isometric embeddings of the circle}, in \textit{Geometric science of information. Part I}, Lecture Notes in Comput. Sci., vol.~\textbf{14071}, Springer, Cham, (2023), 426--435.


\bibitem{franklin2000matrix}
J.~N.~Franklin, \textit{Matrix theory}, Dover Publications, Mineola, NY, 2000.


\bibitem{zhang2022direct}
D.~Zhang, Y.~Wu, Y.~Wang, and Y.~Guo, \textit{A direct imaging method for the exterior and interior inverse scattering problems}, Inverse Probl.\ Imaging \textbf{16} (2022), no.~5, 1299--1323.


\bibitem{dong2019inverse}
H.~Dong, J.~Lai, and P.~Li, \textit{Inverse obstacle scattering for elastic waves with phased or phaseless far-field data}, SIAM J.\ Imaging Sci.\ \textbf{12} (2019), no.~2, 809--838.


\bibitem{yue2019numerical}
J.~Yue, M.~Li, P.~Li, and X.~Yuan, \textit{Numerical solution of an inverse obstacle scattering problem for elastic waves via the Helmholtz decomposition}, Commun.\ Comput.\ Phys.\ \textbf{26} (2019), no.~3, 809--837.


\bibitem{li2019inverse}
P.~Li, J.~Wang, and L.~Zhang, \textit{Inverse obstacle scattering for Maxwell's equations in an unbounded structure}, Inverse Problems \textbf{35} (2019), no.~9, 095002.


\bibitem{cakoni2004electromagnetic}
F.~Cakoni, D.~Colton, and P.~Monk, \textit{The electromagnetic inverse-scattering problem for partly coated Lipschitz domains}, Proc.\ Roy.\ Soc.\ Edinburgh Sect.\ A \textbf{134} (2004), no.~4, 661--682.


\bibitem{audibert2022accelerated}
L.~Audibert, H.~Haddar, and X.~Liu, \textit{An accelerated level-set method for inverse scattering problems}, SIAM J.\ Imaging Sci.\ \textbf{15} (2022), no.~3, 1576--1600.


\bibitem{chang2024novel}
Y.~Chang, Y.~Guo, H.~Liu, and D.~Zhang, \textit{A novel Newton method for inverse elastic scattering problems}, Inverse Problems \textbf{40} (2024), no.~7, 075009, 34~pp..


\bibitem{colton1996simple}
D.~Colton and A.~Kirsch, \textit{A simple method for solving inverse scattering problems in the resonance region}, Inverse Problems \textbf{12} (1996), no.~4, 383--393.


\bibitem{ji2018direct}
X.~Ji, X.~Liu, and Y.~Xi, \textit{Direct sampling methods for inverse elastic scattering problems}, Inverse Problems \textbf{34} (2018), no.~3, 035008, 22~pp.


\bibitem{kirsch2007factorization}
A.~Kirsch and N.~Grinberg, \textit{The factorization method for inverse problems},
Oxford University Press, Oxford, 2007.


\bibitem{li2020extended}
Z.~Li, Z.~Deng, and J.~Sun, \textit{Extended-sampling-Bayesian method for limited aperture inverse scattering problems}, SIAM J.\ Imaging Sci.\ \textbf{13} (2020), no.~1, 422--444.


\bibitem{khoo2019switchnet}
Y.~Khoo and L.~Ying, \textit{SwitchNet: a neural network model for forward and inverse scattering problems}, SIAM J.\ Sci.\ Comput.\ \textbf{41} (2019), no.~5, A3182--A3201.


\bibitem{chen2024solving}
J.~Chen, B.~Jin, and H.~Liu, \textit{Solving inverse obstacle scattering problem with latent surface representations}, Inverse Problems \textbf{40} (2024), no.~6, 065013, 30~pp..


\bibitem{yin2025tddm}
Y.~Yin and L.~Yan, \textit{TDDM: a transfer learning framework for physics-guided 3D acoustic scattering inversion}, J.\ Comput.\ Phys.\ \textbf{539} (2025), 114211, 22~pp..


\bibitem{ning2025direct}
J.~Ning, F.~Han, and J.~Zou, \textit{A direct sampling method and its integration with deep learning for inverse scattering problems with phaseless data}, SIAM J.\ Sci.\ Comput.\ \textbf{47} (2025), no.~2, C343--C368.


\bibitem{bao2014inverse}
G.~Bao, S.-N.~Chow, P.~Li, and H.~Zhou, \textit{An inverse random source problem for the Helmholtz equation}, Math.\ Comp.\ \textbf{83} (2014), no.~285, 215--233.


\bibitem{li2020inverse}
J.~Li, T.~Helin, and P.~Li, \textit{Inverse random source problems for time-harmonic acoustic and elastic waves}, Comm.\ Partial Differential Equations \textbf{45} (2020), no.~10, 1335--1380.


\bibitem{li2024inverse}
P.~Li and X.~Wang, \textit{Inverse scattering for the biharmonic wave equation with a random potential}, SIAM J.\ Math.\ Anal.\ \textbf{56} (2024), no.~2, 1959--1995.


\bibitem{bao2020inverse}
G.~Bao, Y.~Lin, and X.~Xu, \textit{Inverse scattering by a random periodic structure}, SIAM J.\ Numer.\ Anal.\ \textbf{58} (2020), no.~5, 2934--2952.


\bibitem{bao2018robust}
G.~Bao, Y.~Cao, Y.~Hao, and K.~Zhang, \textit{A robust numerical method for the random interface grating problem via shape calculus, weak Galerkin method, and low-rank approximation}, J.\ Sci.\ Comput.\ \textbf{77} (2018), no.~1, 419--442.


\bibitem{chen2020review}
X.~Chen, Z.~Wei, M.~Li, and P.~Rocca, \textit{A review of deep learning approaches for inverse scattering problems (invited review)}, Progr.\ Electromagn.\ Res.\ \textbf{167} (2020), 67--81.


\bibitem{koltchinskii2017concentration}
V.~Koltchinskii and K.~Lounici, \textit{Concentration inequalities and moment bounds for sample covariance operators}, Bernoulli \textbf{23} (2017), no.~1, 110--133.


\end{thebibliography}

\end{document}